\documentclass[8pt,a4paper]{article}
\usepackage[utf8]{inputenc}
\usepackage{amsfonts}
\usepackage{amsthm}
\usepackage{amsmath}
\usepackage{amssymb}
\usepackage{enumitem,kantlipsum}

\newtheorem{theorem}{Theorem}[subsection]
\theoremstyle{definition}
\newtheorem{definition}[theorem]{Definition}
\newtheorem{corollary}[theorem]{Corollary}
\newtheorem{lemma}[theorem]{Lemma}
\newtheorem{proposition}[theorem]{Proposition}

\newtheorem{example}[theorem]{Example}
\newtheorem{remark}[theorem]{Remark}

\def\b{\bold{B}}

\def\p2{$PSL_2(q)$}

\def\b2{B_{n-2}}

\begin{document}

\author{Sawsan Khaskeia\\Department of Mathematics\\Ariel University, Israel\\sawsan@ariel.ac.il\and Robert Shwartz\\Department of Mathematics\\Ariel University, Israel\\robertsh@ariel.ac.il}
\date{}
\title{Generalization of the basis theorem for the $D$-type Coxeter groups}
\maketitle
    
  \begin{abstract}
The $OGS$ for  non-abelian groups is an interesting
generalization of the basis of finite abelian groups. The definition of $OGS$ states that every element of a group has a unique presentation as a product of some powers of specific generators of the group, in a specific given order. In case of the symmetric groups $S_{n}$ there is a paper of R. Shwartz, which demonstrates a strong connection between the $OGS$ and the standard Coxeter presentation of the symmetric group, which is called the standard $OGS$ of $S_n$. In this paper we generalize the standard $OGS$ of $S_n$  to the finite classical Coxeter group $D_n$. We describe the exchange laws for the generalized standard $OGS$ of $D_n$, and we connect it to the Coxeter length of elements of $D_n$. 
\end{abstract}

\section{Introduction}
The fundamental theorem of finitely generated abelian groups states the following:
Let $A$ be a finitely generated abelian group, then there exists generators $a_{1}, a_{2}, \ldots a_{n}$, such that every element $a$ in $A$ has a unique presentation of a form:
$$g=a_{1}^{i_{1}}\cdot a_{2}^{i_{2}}\cdots a_{n}^{i_{n}},$$
where, $i_{1}, i_{2}, \ldots, i_{n}$ are $n$ integers such that for  $1\leq k\leq n$, $0\leq i_{k}<|g_{k}|$, where $a_{k}$ has a finite order of $|a_{k}|$ in $A$, and $i_{k}\in \mathbb{Z}$, where $a_{k}$ has infinite order in $A$.
Where, the meaning of the theorem is that every abelian group $A$ is direct sum of finitely many cyclic subgroup $A_{i}$ (where $1\leq i\leq k$), for some $k\in \mathbb{N}$.

\begin{definition}\label{ogs}
Let $G$ be a non-abelian group. The ordered sequence of $n$ elements $\langle g_{1}, g_{2}, \ldots, g_{n}\rangle$ is called an $Ordered ~~ Generating ~~ System$ of the group $G$ or by shortened notation, $OGS(G)$, if every element $g\in G$ has a unique presentation in the a form
$$g=g_{1}^{i_{1}}\cdot g_{2}^{i_{2}}\cdots g_{n}^{i_{n}},$$
where, $i_{1}, i_{2}, \ldots, i_{n}$ are $n$ integers such that for  $1\leq k\leq n$, $0\leq i_{k}<r_{k}$, where  $r_{k} | |g_{k}|$  in case the order of $g_{k}$ is finite in $G$, or   $i_{k}\in \mathbb{Z}$, in case  $g_{k}$ has infinite order in $G$.
The mentioned canonical form is called $OGS$ canonical form.  For every $q>p$, $1\leq x_{q}<r_{q}$, and $1\leq x_{p}<r_{p}$ the relation
$$g_{q}^{x_{q}}\cdot g_{p}^{x_{p}} = g_{1}^{i_{1}}\cdot g_{2}^{i_{2}}\cdots g_{n}^{i_{n}},$$
is called exchange law.
\end{definition}
In contrast to finitely generated abelian groups, the existence of an $OGS$ is generally not true for every finitely generated non-abelian group. Even in case of two-generated infinite non-abelian groups it is not difficult to find counter examples. For example, the Baumslag-Solitar groups $BS(m,n)$ \cite{BS}, where $m\neq \pm1$ or $n\neq \pm1$, or most of the cases of the one-relator free product of a finite cyclic group generated by $a$, with a finite two-generated group generated by $b, c$ with the relation $a^{2}\cdot b\cdot a\cdot c=1$ \cite{S}, do not have an $OGS$. Even the question of the existence of an $OGS$ for a general finite non-abelian group is still open. Moreover, contrary to the abelian case where the exchange law is just $g_{q}\cdot g_{p}=g_{p}\cdot g_{q}$, in most of the cases of non-abelian groups with the existence of an $OGS$, the exchange laws are very complicated.  Although there are some specific non-abelian groups where the exchange laws are very convenient and have very interesting properties. A very good example of it is the symmetric group $S_{n}$. In 2001, Adin and Roichman \cite{AR} introduced a presentation of an $OGS$ canonical form  for the symmetric group $S_n$, for the hyperoctahedral group $B_n$, and for the wreath product $\mathbb{Z}_{m}\wr S_{n}$. Adin and Roichman proved that for every element of $S_n$ presented in the standard $OGS$ canonical form, the sum of the exponents of the $OGS$ equals the major-index of the permutation. Moreover, by using an $OGS$ canonical form, Adin and Roichman generalized the theorem of MacMahon \cite{Mac} to the $B$-type Coxeter group, and to the wreath product $\mathbb{Z}_{m}\wr S_{n}$. A few years later, that $OGS$ canonical form was generalized for complex reflection groups by Shwartz, Adin and Roichman \cite{SAR}. Recently, Shwartz \cite{S1} significantly extended the results of \cite{AR}, \cite{SAR}, where the $OGS$ of $S_{n}$ is strongly connected to the Coxeter length and to the descent set of the elements. Moreover, in \cite{S1}, there are described the exchange laws for the $OGS$ canonical forms of the symmetric group $S_n$, which have very interesting and surprising properties.
In the paper we try to generalize the results of \cite{S1} to the finite classical Coxeter group $D_{n}$. Similarly to the symmetric group $S_n$, the group $D_n$ can be considered as permutation group as well. 
Therefore, we recall the notations of permutations,  the $OGS$ of $S_{n}$ and the corresponding exchange laws, from \cite{S1}.

\begin{definition}\label{sn}
Let $S_n$ be the symmetric group on $n$ elements, then :
\begin{itemize}
\item The symmetric group $S_n$ is an $n-1$ generated simply-laced finite Coxeter group  of order $n!$,  which has the presentation of: $$\langle s_1, s_2, \ldots, s_{n-1} | s_i^{2}=1, ~~ (s_i\cdot s_{i+1})^{3}=1, ~~(s_i\cdot s_j)^2=1 ~~for ~~|i-j|\geq 2\rangle;$$
\item The group $S_n$ can be considered as the permutation group on $n$ elements. A permutation $\pi\in S_n$ is denoted by $$
\pi=[\pi(1); ~\pi(2); \ldots; ~\pi(n)]
$$
(i.e., $\pi=[2; ~4; ~1; ~3]$ is a permutation in $S_{4}$ which satisfies $\pi(1)=2$, $\pi(2)=4$, $\pi(3)=1$, and $\pi(4)=3$);
\item Every permutation $\pi\in S_n$ can be presented in a cyclic notation, as a product of disjoint cycles of the form $(i_1, ~i_2, ~\ldots, ~i_m)$, which means $\pi(i_{k})=i_{k+1}$, for $1\leq k\leq m-1$, and $\pi(i_{m})=i_{1}$
    (i.e., The cyclic notation of $\pi=
[3; ~4; ~1; ~5; ~2]$ in $S_5$, is $(1, ~3)(2, ~4, ~5)$);
\item The Coxeter generator $s_i$ can be considered the permutation which exchanges the element $i$ with the element $i+1$, i.e., the transposition $(i, i+1)$;
\item We consider multiplication of permutations in left to right order; i.e., for every $\pi_1, \pi_2\in S_n$, $\pi_1\cdot \pi_2 (i)=\pi_2(j)$, where, $\pi_1(i)=j$ (in contrary to the notation in \cite{AR} where Adin, Roichman, and other people have considered right to left multiplication of permutations);
\item For every permutation $\pi\in S_n$, the Coxeter length $\ell(\pi)$ is the number of inversions in $\pi$, i.e., the number of different pairs $i, j$, s. t. $i<j$ and $\pi(i)>\pi(j)$;
\item For every permutation $\pi\in S_n$, the set of the locations of the descents is defined to be $$des\left(\pi\right)=\{1\leq i\leq n-1 | \pi(i)>\pi(i+1)\},$$ and $$i\in des\left(\pi\right) ~~if ~and ~only ~if ~~\ell(s_i\cdot \pi)<\ell(\pi)$$ (i.e., $i$ is a descent of $\pi$ if and only if multiplying $\pi$ by $s_i$ in the left side shortens the Coxeter length of the element.);
\item For every permutation $\pi\in S_n$, the major-index is defined to be  $$maj\left(\pi\right)=\sum_{\pi(i)>\pi(i+1)}i$$ (i.e., major-index is the sum of the locations of the descents of $\pi$.).
\item By \cite{BB} Chapter 3.4, every element $\pi$ of $S_n$ can be presented uniquely in the following normal reduced form, which we denote by $norm(\pi)$:
$$norm(\pi)=\prod_{u=1}^{n-1}\prod_{r=0}^{y_{u}-1}s_{u-r}.$$
such that $y_u$ is a non-negative integer where, $0\leq y_u\leq u$ for every $1\leq u\leq n-1$.
By our notation of $norm(\pi)$ the Coxeter length of an element $\pi$ as follow: $$\ell(\pi)=\sum_{u=1}^{n-1}y_{u}.$$
For example: Let  $m=8$, $y_2=2$, $y_4=3$, $y_5=1$, $y_8=4$, and $y_1=y_3=y_6=y_7=0$, then
$$norm(\pi)=(s_2\cdot s_1)\cdot (s_4\cdot s_3\cdot s_2)\cdot s_5\cdot (s_8\cdot s_7\cdot s_6\cdot s_5).$$
$$\ell(\pi)=2+3+1+4=10.$$
\end{itemize}
\end{definition}

\begin{theorem}\label{canonical-sn}
Let $S_n$ be the symmetric group on $n$ elements. For every $2\leq m\leq n$, define $t_{m}$ to be the product $\prod_{j=1}^{m-1}s_{j}$. The element $t_{m}$ is the permutation $$t_m=
[m; ~1; ~2; \ldots; ~m-1; ~m+1;\ldots; ~n],$$
 which is the $m$-cycle $(m, ~m-1, ~\ldots, ~1)$ in the cyclic notation of the permutation. Then, the elements $t_{n}, t_{n-1}, \ldots, t_{2}$ generates $S_n$, and every element of $S_n$ has a unique presentation in the following $OGS$ canonical form:

$$t_{2}^{i_{2}}\cdot t_{3}^{i_{3}}\cdots t_{n}^{i_{n}},~~~ where ~~~0\leq i_{k}<k ~~~for ~~~2\leq k\leq n$$
\end{theorem}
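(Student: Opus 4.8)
The plan is to argue by induction on $n$, using the parabolic chain $S_{n-1}\le S_n$, where $S_{n-1}=\langle s_1,\ldots,s_{n-2}\rangle$ is identified with the subgroup of permutations fixing the point $n$. The base case $n=2$ is immediate, since $S_2=\{e,s_1\}=\{t_2^{0},t_2^{1}\}$. Note also that the generators $t_2,\ldots,t_{n-1}$ occurring in the theorem for $S_n$ are literally the same group elements as the generators of the same name for $S_{n-1}$, so the induction hypothesis will apply verbatim inside $S_{n-1}$.

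For the inductive step I would first record two elementary facts. (i) The definition $t_m=\prod_{j=1}^{m-1}s_j$ gives the telescoping identity $t_m=t_{m-1}\cdot s_{m-1}$, hence $s_{m-1}=t_{m-1}^{-1}t_m$; combined with $s_1=t_2$ this yields $\langle t_2,\ldots,t_n\rangle\supseteq\langle s_1,\ldots,s_{n-1}\rangle=S_n$, i.e.\ the $t_m$ do generate $S_n$. (ii) Computing the action of $s_1 s_2\cdots s_{m-1}$ in the left-to-right convention of Definition~\ref{sn} (so $s_1$ acts first) confirms the stated formula $t_m=[m;\,1;\,2;\,\ldots;\,m-1;\,m+1;\,\ldots;\,n]$, so $t_m$ is the $m$-cycle $(1,\,m,\,m-1,\,\ldots,\,2)$, of order exactly $m$; in particular $t_n^{\,i}(n)=n-i$ for $0\le i\le n-2$ and $t_n^{\,n-1}(n)=1$, so the $n$ values $t_n^{\,i}(n)$ for $0\le i\le n-1$ are pairwise distinct.

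The heart of the proof is the claim that $\{t_n^{0},t_n^{1},\ldots,t_n^{\,n-1}\}$ is a complete set of right-coset representatives of $S_{n-1}$ in $S_n$. Since $[S_n:S_{n-1}]=n$, it suffices to see these $n$ powers lie in distinct right cosets: if $S_{n-1}t_n^{\,a}=S_{n-1}t_n^{\,b}$ with $0\le a<b\le n-1$, then $t_n^{\,b-a}\in S_{n-1}$, i.e.\ $t_n^{\,b-a}$ fixes $n$, contradicting fact (ii) because $0<b-a<n$. Granting this, an arbitrary $\pi\in S_n$ lies in a unique coset $S_{n-1}t_n^{\,i_n}$; equivalently there is a unique $i_n\in\{0,\ldots,n-1\}$ — explicitly the one with $t_n^{\,i_n}(n)=\pi(n)$ — for which $\sigma:=\pi\cdot t_n^{-i_n}$ fixes $n$, i.e.\ $\sigma\in S_{n-1}$. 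By the induction hypothesis $\sigma$ has a unique expression $t_2^{i_2}\cdots t_{n-1}^{i_{n-1}}$ with $0\le i_k<k$, and substituting gives $\pi=t_2^{i_2}\cdots t_{n-1}^{i_{n-1}}t_n^{i_n}$ with $0\le i_k<k$ for all $k$. Uniqueness follows in the same way: in any two such expressions for $\pi$ the factor to the left of $t_n^{(\cdot)}$ lies in $S_{n-1}$, so the coset claim forces equal $t_n$-exponents, and cancelling $t_n^{i_n}$ reduces to uniqueness in $S_{n-1}$. As a numerical sanity check, the number of admissible tuples $(i_2,\ldots,i_n)$ is $2\cdot3\cdots n=n!=|S_n|$.

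The only place where something genuinely has to be proved, rather than merely bookkept, is the coset claim, and it collapses to the single observation that $t_n$ is an $n$-cycle passing through the point $n$, so no power $t_n^{\,c}$ with $0<c<n$ can fix $n$. The one subtlety worth care is the multiplication convention: in the left-to-right convention of Definition~\ref{sn}, $t_m$ is the cycle $(1,m,m-1,\ldots,2)$, whereas the opposite convention would give its inverse; since only the cycle length of $t_n$ (and the values $t_n^{\,i}(n)$) is used, the argument is insensitive to this beyond getting the explicit formula for $i_n$ right.
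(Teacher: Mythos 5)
Your proof is correct, and it follows essentially the same route the paper takes: the paper itself only cites this theorem from \cite{S1}, but its proof of the $D_n$ analogue (Theorem \ref{ogs-dn}) is exactly your argument — induction on $n$ via right cosets of the parabolic subgroup stabilizing the point $n$, with the powers of the last generator serving as coset representatives distinguished by the image of $n$. No gaps; your handling of the multiplication convention and the uniqueness reduction is accurate.
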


\begin{proposition}\label{exchange}
The following holds:
\\

In order to transform  the element $t_{q}^{i_{q}}\cdot t_{p}^{i_{p}}$  ($p<q$) onto the $OGS$ canonical form\\ $t_{2}^{i_{2}}\cdot t_{3}^{i_{3}}\cdots t_{n}^{i_{n}}$, i.e., according to the  standard $OGS$, one needs to use the following exchange laws:

 \[ t_{q}^{i_{q}}\cdot t_{p}^{i_{p}}=\begin{cases}
t_{i_{q}+i_{p}}^{i_q}\cdot t_{p+i_{q}}^{i_{p}}\cdot t_{q}^{i_{q}}  & q-i_{q}\geq p \\
\\
t_{i_{q}}^{p+i_{q}-q}\cdot t_{i_{q}+i_{p}}^{q-p}\cdot t_{q}^{i_{q}+i_{p}} & i_{p}\leq q-i_{q}\leq p \\
\\
t_{p+i_{q}-q}^{i_{q}+i_{p}-q}\cdot t_{i_{q}}^{p-i_{p}}\cdot t_{q}^{i_{q}+i_{p}-p}  & q-i_{q}\leq i_{p}.
\end{cases}
\]

\end{proposition}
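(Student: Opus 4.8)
\noindent\emph{Proof idea.} The plan is to verify the three identities directly inside the symmetric group, working throughout in one-line notation. The first ingredient is the explicit shape of a power of a generator: since $t_m=(m,\ m-1,\ \ldots,\ 1)$, an easy induction on $i$ shows that for $0\le i<m$
$$t_m^{i}=[\,m-i+1;\ \ldots;\ m;\ 1;\ \ldots;\ m-i;\ m+1;\ \ldots;\ n\,],$$
i.e.\ $t_m^{i}$ is the rotation of the interval $\{1,\ldots,m\}$ that takes position $k$ to $m-i+k$ for $k\le i$, to $k-i$ for $i<k\le m$, and fixes every position $>m$. Since the asserted identities are trivial when $i_q=0$ or $i_p=0$, we may assume $1\le i_p<p<q$ and $1\le i_q<q$; it is convenient to abbreviate $a=i_q$ and $b=i_p$.

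The second step computes the left-hand side $t_q^{a}\cdot t_p^{b}$ as a permutation, using the left-to-right convention $(\pi_1\pi_2)(k)=\pi_2(\pi_1(k))$ of Definition~\ref{sn}. Applying $t_q^{a}$ first carries the block $\{1,\ldots,a\}$ monotonically onto $\{q-a+1,\ldots,q\}$ and the block $\{a+1,\ldots,q\}$ monotonically onto $\{1,\ldots,q-a\}$, while fixing $\{q+1,\ldots,n\}$; then $t_p^{b}$ rotates whichever part of these image intervals lands inside $\{1,\ldots,p\}$. The trichotomy of the statement is exactly the trichotomy for where the threshold $q-a$ lies: either $q-a\ge p$ (Case 1), or $i_p\le q-a\le p$ (Case 2), or $q-a\le i_p$ (Case 3). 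In each case a short block-by-block computation writes $t_q^{a}t_p^{b}$ in one-line notation as a concatenation of at most four increasing runs of consecutive integers, with endpoints depending on $a,b,p,q$.

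The third step expands the proposed right-hand side in the same manner: each of its three factors is again an interval rotation of the type above, so the product is obtained by the same mechanical tracking of runs, and one checks run-by-run that it coincides with the one-line form of $t_q^{a}t_p^{b}$ produced in the previous step. It then remains to confirm that the displayed product really is the $OGS$ canonical form of Theorem~\ref{canonical-sn}. This amounts to checking, in each case, that the three indices increase with the required strictness --- $i_q+i_p<p+i_q\le q$ in Case 1, $i_q<i_q+i_p\le q$ in Case 2, $1\le p+i_q-q<i_q<q$ in Case 3 --- and that each exponent is smaller than its index; all of these reduce to the elementary inequalities $1\le i_p<p<q$ together with the defining inequality of the relevant case.

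The genuine difficulty is purely combinatorial bookkeeping: keeping the endpoints of the runs correct as the permutations are composed, and disposing of the degenerate configurations --- an empty run, an exponent equal to $0$ so that a factor disappears, or two indices on the right-hand side coinciding (for instance $p+i_q=q$ in Case 1, or $i_q+i_p=q$ in Case 2), in which case two adjacent factors must first be merged into a single power before the expression is literally in canonical form. Once the run decompositions are written out explicitly, each of the three cases is a finite, if somewhat tedious, verification.
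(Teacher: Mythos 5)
The paper itself gives no argument for Proposition~\ref{exchange}: it is recalled verbatim from \cite{S1}, so there is no in-paper proof to measure your attempt against. Judged on its own, your plan is sound and would work. The one-line formula $t_m^{i}=[\,m-i+1;\ \ldots;\ m;\ 1;\ \ldots;\ m-i;\ m+1;\ \ldots;\ n\,]$ is correct, you use the left-to-right multiplication convention of Definition~\ref{sn} (which is essential -- with the opposite convention the displayed identities fail), the trichotomy $q-i_q\geq p$, $i_p\leq q-i_q\leq p$, $q-i_q\leq i_p$ is exactly the right split according to where the threshold $q-i_q$ falls relative to $\{i_p,p\}$, and the canonical-form inequalities you list in each case do reduce to $1\leq i_p<p<q$ plus the case hypothesis. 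Your attention to the degenerate configurations (a factor with exponent $0$, an index dropping to $1$, or two indices coinciding, e.g.\ $p+i_q=q$ or $i_q+i_p=q$, which force a merge) is exactly the boundary behaviour the paper records separately in Remark~\ref{exchange-2}, so nothing is being swept under the rug there. The only reservation is that what you have written is a verification plan rather than the verification: the run-by-run one-line computations of both sides in each of the three cases -- the actual content of the proof -- are deferred as ``tedious but finite.'' They are indeed routine (spot checks in small symmetric groups confirm all three identities under your conventions), but to count as a complete proof you would need to write out the four-run decomposition of $t_q^{i_q}t_p^{i_p}$ and of the proposed right-hand side explicitly in each case, rather than assert that they agree.
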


\begin{remark}\label{exchange-2}
The standard $OGS$ canonical form of $t_{q}^{i_{q}}\cdot t_{p}^{i_{p}}$ is a product of non-zero powers of two different canonical generators if and only if $q-i_{q}=p$ or $q-i_{q}=i_{p}$, as follow:
\begin{itemize}
    \item If $q-i_q=p$ then by considering  $q-i_q\geq p$: $$t_{i_{q}+i_{p}}^{i_q}\cdot t_{p+i_{q}}^{i_{p}}\cdot t_{q}^{i_{q}}=t_{i_{q}+i_{p}}^{i_q}\cdot t_{q}^{i_p}\cdot t_{q}^{i_q}$$ and by considering  $q-i_q\leq p$:  $$t_{i_{q}}^{p+i_{q}-q}\cdot t_{i_{q}+i_{p}}^{q-p}\cdot t_{q}^{i_{q}+i_{p}}=t_{i_q}^{0}\cdot t_{i_q+i_p}^{i_q}\cdot t_{q}^{i_{q}+i_{p}};$$
    \item If $q-i_q=i_p$ then by considering  $q-i_q\geq i_p$: $$t_{i_q}^{p+i_q-q}\cdot t_{i_{q}+i_{p}}^{q-p}\cdot t_{q}^{i_{q}+i_{p}}=t_{i_q}^{p-i_p}\cdot t_q^{q-p}\cdot t_q^q=t_{i_q}^{p-i_p}\cdot t_q^{q-p}$$ and by considering  $q-i_q\leq i_p$: $$t_{p+i_{q}-q}^{i_{q}+i_{p}-q}\cdot t_{i_{q}}^{p-i_{p}}\cdot t_{q}^{i_{q}+i_{p}-p}=t_{p+i_q-q}^0\cdot t_{i_{q}}^{p-i_{p}}\cdot t_{q}^{q-p}.$$
\end{itemize}
Hence we have
 \[ t_{q}^{i_{q}}\cdot t_{p}^{i_{p}}=\begin{cases}
  t_{i_{q}+i_{p}}^{i_q}\cdot t_{q}^{i_{q}+i_{p}} & q-i_{q} = p \\
\\
  t_{i_{q}}^{p-i_{p}}\cdot t_{q}^{q-p} & q-i_{q} = i_{p}.
\end{cases}
\]
\end{remark}

Moreover, the most significant achievement of the paper \cite{S1} is the definition of the standard $OGS$ elementary factorization. By using the standard $OGS$ elementary factorization, it is possible to give a very interesting formula for the Coxeter length and a complete classification of the descent set of any arbitrary element of $S_n$. In the paper we try to generalize the standard $OGS$ elementary factorization to  the $B$-type Coxeter groups, in order to find similar properties (Coxeter length and the descent set) for the elements of the group.
Hence, we recall the definition of the standard $OGS$ elementary element and factorization for the symmetric group $S_n$ as it is defined in \cite{S1}, and theorems concerning the Coxeter length and the descent set of elements of $S_n$ as it is mentioned and proved in \cite{S1}  .

\begin{definition}\label{elementary}
Let $\pi\in S_n$, where $\pi=\prod_{j=1}^{m}t_{k_{j}}^{i_{k_{j}}}$ is presented in the standard $OGS$ canonical form, with $i_{k_{j}}>0$ for every $1\leq j\leq m$. Then, $\pi$ is called standard $OGS$ elementary element of $S_n$, if
$$\sum_{j=1}^{m}i_{k_{j}}\leq k_{1}.$$
\end{definition}

\begin{theorem}\label{theorem-elementary}\cite{S1}
Let $\pi=\prod_{j=1}^{m}t_{k_{j}}^{i_{k_{j}}}$ be a standard $OGS$ elementary element of $S_n$, presented in the standard $OGS$ canonical form, with $i_{k_{j}}>0$ for every $1\leq j\leq m$. Then, the following are satisfied:
 \begin{itemize}
\item $$\ell(\pi)=\sum_{j=1}^{m}k_{j}\cdot i_{k_{j}}-(i_{k_{1}}+i_{k_{2}}+\cdots +i_{k_{m}})^{2}=\sum_{j=1}^{m}k_{j}\cdot i_{k_{j}}-\left(maj\left(\pi\right)\right)^{2};$$
\item Every subword of $\pi$ is a standard $OGS$ elementary element too. In particular, for every two subwords $\pi_{1}$ and $\pi_{2}$ of $\pi$, such that $\pi=\pi_{1}\cdot \pi_{2}$, it is satisfied: $$\ell(\pi)=\ell(\pi_{1}\cdot \pi_{2})<\ell(\pi_{1})+\ell(\pi_{2});$$
\item $$\ell(s_r\cdot \pi)=\begin{cases} \ell(\pi)-1 & r=\sum_{j=1}^{m}i_{k_{j}} \\
\ell(\pi)+1 & r\neq \sum_{j=1}^{m}i_{k_{j}} \end{cases}.$$
i.e., $des\left(\pi\right)$ contains just one element, which means $des\left(\pi\right)=\{maj\left(\pi\right)\}$.
\end{itemize}
\end{theorem}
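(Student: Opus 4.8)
The plan is to derive all three assertions from one explicit computation: the one-line notation of $\pi$. Write $\pi=\prod_{j=1}^{m}t_{k_{j}}^{i_{k_{j}}}$ with $k_{1}<k_{2}<\cdots<k_{m}$ and $i_{k_{j}}>0$, set $A=\sum_{j=1}^{m}i_{k_{j}}$ (so the elementary hypothesis reads $A\le k_{1}$), and let $A_{0}=0$, $A_{j}=\sum_{l=1}^{j}i_{k_{l}}$. Recall that $t_{k}^{a}$ fixes $k+1,\dots,n$, sends $x\mapsto x-a$ for $a<x\le k$, and sends $x\mapsto x-a+k$ for $1\le x\le a$. I first claim that for $1\le p\le A$ one has $\pi(p)=p+k_{j(p)}-A$, where $j(p)=\min\{j:\,A_{j}\ge p\}$, while for $k_{l}<p\le k_{l+1}$ with $0\le l\le m$ (convention $k_{0}:=A$, $k_{m+1}:=n$) one has $\pi(p)=p-A+A_{l}$. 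I would prove this by induction on $m$: to pass from $\pi'=\prod_{j=1}^{m-1}t_{k_{j}}^{i_{k_{j}}}$ to $\pi=\pi'\cdot t_{k_{m}}^{i_{k_{m}}}$, right multiplication by $t_{k_{m}}^{i_{k_{m}}}$ decreases every value $>i_{k_{m}}$ by $i_{k_{m}}$ and wraps the values $1,\dots,i_{k_{m}}$ up to $k_{m}-i_{k_{m}}+1,\dots,k_{m}$; the induction hypothesis places the values $1,\dots,i_{k_{m}}$ exactly in positions $A_{m-1}+1,\dots,A_{m}$ \emph{precisely because} $A\le k_{1}$, so those positions become the new rightmost block and the uniform decrement reproduces the asserted form.

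Granting the formula, a short case analysis of when $\pi(p)>\pi(p')$ for $p<p'$ shows that an inversion occurs exactly when $p$ lies in a block and $p'$ lies in an interval $(k_{l},k_{l+1}]$ with $l<j(p)$, and that \emph{every} such pair is an inversion (here one uses $A\le k_{1}\le k_{j}$ and $A_{l}\le A_{j-1}$, while all other pairs of positions are easily seen to be non-inversions). Summing, $\ell(\pi)=\sum_{j=1}^{m}i_{k_{j}}\sum_{l=0}^{j-1}(k_{l+1}-k_{l})=\sum_{j=1}^{m}i_{k_{j}}(k_{j}-A)=\sum_{j=1}^{m}k_{j}i_{k_{j}}-A^{2}$, which is the first displayed formula (the identity with $(maj(\pi))^{2}$ being immediate once $maj(\pi)=A$). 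Applying the same one-line formula to consecutive positions shows that the only descent of $\pi$ is at position $A$, so $des(\pi)=\{A\}$ and $maj(\pi)=A=i_{k_{1}}+\cdots+i_{k_{m}}$. Since in $S_{n}$ one has $\ell(s_{r}\pi)=\ell(\pi)\pm 1$ with $\ell(s_{r}\pi)<\ell(\pi)$ iff $r\in des(\pi)$, the last assertion follows: $\ell(s_{r}\pi)=\ell(\pi)-1$ when $r=A$, and $\ell(\pi)+1$ otherwise.

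For the middle assertion, a subword of $\pi$ is a product $\prod_{j\in S}t_{k_{j}}^{b_{j}}$ with $S\subseteq\{1,\dots,m\}$ and $0<b_{j}\le i_{k_{j}}$; it is in OGS canonical form because $b_{j}<k_{j}$, and it is elementary because $\sum_{j\in S}b_{j}\le A\le k_{1}\le\min_{j\in S}k_{j}$. In particular, writing $\pi=\pi_{1}\cdot\pi_{2}$ where $\pi_{1}=\prod_{j<r}t_{k_{j}}^{i_{k_{j}}}\cdot t_{k_{r}}^{c}$ is an initial subword of the canonical word and $\pi_{2}=t_{k_{r}}^{i_{k_{r}}-c}\prod_{j>r}t_{k_{j}}^{i_{k_{j}}}$ the complementary terminal subword (for suitable $r$ and $0\le c\le i_{k_{r}}$), both $\pi_{1}$ and $\pi_{2}$ are elementary, so the length formula just proved applies to them. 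With $B=\sum_{j<r}i_{k_{j}}+c$ and $C=A-B$ one computes $\ell(\pi_{1})+\ell(\pi_{2})-\ell(\pi)=(B+C)^{2}-B^{2}-C^{2}=2BC\ge 0$, with equality iff $B=0$ or $C=0$, i.e.\ iff $\pi_{1}$ or $\pi_{2}$ is trivial. Hence $\ell(\pi_{1}\cdot\pi_{2})<\ell(\pi_{1})+\ell(\pi_{2})$ for every proper factorization.

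The one place where genuine care is needed is the inductive proof of the one-line formula: at each step one must verify that the elementary hypothesis keeps the ``small'' values (those about to wrap under the next $t_{k}^{a}$) confined to a single predictable block of positions. Once that is in place, the inversion count, the descent computation, and the subadditivity identity are all routine.
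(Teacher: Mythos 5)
The paper never proves this theorem itself---it is imported verbatim from \cite{S1}---so there is no in-paper argument to measure yours against; judged on its own terms, your proof is correct and self-contained. Your explicit one-line description of $\pi$ (values $\pi(p)=p+k_{j(p)}-A$ on the block $1\le p\le A$, and $\pi(p)=p-A+A_{l}$ on the intervals $(k_{l},k_{l+1}]$) is accurate, the induction step hinges exactly where you say it does (the elementary hypothesis $A\le k_{1}$ forces the values $1,\dots,i_{k_m}$ to occupy precisely the positions $A_{m-1}+1,\dots,A_{m}$), and the inversion count $\sum_j i_{k_j}(k_j-A)$, the single descent at position $A$, and the $2BC$ computation for strict subadditivity all check out against the stated formulas. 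Two points deserve tightening. First, the phrase ``decreases every value $>i_{k_m}$ by $i_{k_m}$'' is literally false for values $>k_{m}$, which $t_{k_m}^{i_{k_m}}$ fixes; this is harmless because by induction such values occur only at positions $>k_{m}$, but it should be said. Second, in the middle bullet you only treat factorizations $\pi=\pi_{1}\cdot\pi_{2}$ in which $\pi_{1}$ is a prefix and $\pi_{2}$ the complementary suffix of the canonical word; your $2BC$ identity uses that the exponents of $\pi_{1}$ and $\pi_{2}$ add componentwise to those of $\pi$, so if the statement is read as allowing arbitrary pairs of subwords whose product happens to equal $\pi$, you would still owe an argument (via uniqueness of the canonical form and the exchange laws) that any such pair is of this complementary type. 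Since the prefix/suffix reading is the one the present paper actually uses downstream (for instance in the proof of Lemma \ref{tw-standard}), this is an interpretive caveat rather than a flaw in the computations you carried out.
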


\begin{definition}\label{canonical-factorization-def}
Let $\pi\in S_n$. Let $z(\pi)$ be the minimal number, such that $\pi$ can be presented as a product of standard $OGS$ elementary elements, with the following conditions:
\begin{itemize}
\item $$\pi=\prod_{v=1}^{z(\pi)}\pi^{(v)}, ~~~~ where ~~~~\pi^{(v)}=\prod_{j=1}^{m^{(v)}}t_{h^{(v)}_{j}}^{\imath_{j}^{(v)}},$$
 by the presentation in the standard $OGS$ canonical form  for every $1\leq v\leq z(\pi)$ and  $1\leq j\leq m^{(v)}$ such that:
 \begin{itemize}
\item $\imath_{j}^{(v)}>0;$ \\
\item $\sum_{j=1}^{m^{(1)}}\imath_{j}^{(1)}\leq h^{(1)}_{1}$ i.e., $maj\left(\pi^{(1)}\right)\leq h^{(1)}_{1}$; \\
\item $h^{(v-1)}_{m^{(v-1)}}\leq\sum_{j=1}^{m^{(v)}}\imath_{j}^{(v)}\leq h^{(v)}_{1}$ for $2\leq v\leq z$ \\ \\
i.e., $h^{(v-1)}_{m^{(v-1)}}\leq maj\left(\pi^{(v)}\right)\leq h^{(v)}_{1} ~~ for ~~ 2\leq v\leq z$.
\end{itemize}
\end{itemize}
Then, the mentioned presentation is called \textbf{Standard $OGS$ elementary factorization} of $\pi$. Since
the factors $\pi^{(v)}$ are standard $OGS$ elementary elements, they are called standard $OGS$ elementary factors of $\pi$.
\end{definition}

\begin{theorem}\label{theorem-factorization}\cite{S1}
Let $\pi=\prod_{j=1}^{m}t_{k_{j}}^{i_{k_{j}}}$ be an element of $S_n$ presented in the standard $OGS$ canonical form, with $i_{k_{j}}>0$ for every $1\leq j\leq m$. Consider the standard $OGS$ elementary factorization of $\pi$ with all the notations used in Definition \ref{canonical-factorization-def}.
Then, the following properties hold:
\begin{itemize}
\item The standard $OGS$ elementary factorization of $\pi$ is unique, i.e., the parameters  $z(\pi)$, $m^{(v)}$ for $1\leq v\leq z(\pi)$, $h^{(v)}_{j}$, and $\imath_{j}^{(v)}$ for $1\leq j\leq m^{(v)}$, are uniquely determined by the standard $OGS$ canonical form of $\pi$, such that:
    \begin{itemize}
    \item For every $h^{(v)}_{j}$ there exists exactly one $k_{j'}$ (where, $1\leq j'\leq m$), such that $h^{(v)}_{j}=k_{j'}$;
    \item If $h^{(v)}_{j}=k_{j'}$, for some $1\leq v\leq z(\pi)$, ~$1<j<m^{(v)}$, and $1\leq j'\leq m$, then $\imath_{j}^{(v)}=i_{k_{j'}}$;
    \item If $h^{(v_{1})}_{j_{1}}=h^{(v_{2})}_{j_{2}}$, where $1\leq v_{1}<v_{2}\leq z(\pi)$, ~$1\leq j_{1}\leq m^{(v_{1})}$, and  \\ $1\leq j_{2}\leq m^{(v_{2})}$, then necessarily $v_{1}=v_{2}-1$, ~$j_{1}=m^{(v_{1})}$, ~$j_{2}=1$, and $$h^{(v_{2}-1)}_{m^{(v_{2}-1)}}=h^{(v_{2})}_{1}=maj\left(\pi_{(v_{2})}\right)=k_{j'},$$
        for some $j'$, such that $\imath_{m^{(v_{2}-1)}}^{(v_{2}-1)}+\imath_{1}^{(v_{2})}=i_{k_{j'}}$;
    \end{itemize}

\item $$\ell(s_r\cdot \pi) = \begin{cases} \ell(\pi)-1 & r=\sum_{j=1}^{m^{(v)}}\imath_{j}^{(v)} ~~for ~~ 1\leq v\leq z(\pi) \\
\ell(\pi)+1 & otherwise \end{cases}.$$
i.e., $$des\left(\pi\right)=\bigcup_{v=1}^{z(\pi)}des\left(\pi^{(v)}\right)=\{maj\left(\pi^{(v)}\right)~|~1\leq v\leq z(\pi)\};$$
\item
\begin{align*}
\ell(\pi) &= \sum_{v=1}^{z(\pi)}\ell(\pi^{(v)}) = \sum_{v=1}^{z(\pi)}\sum_{j=1}^{m^{(v)}}h^{(v)}_{j}\cdot \imath_{j}^{(v)}-\sum_{v=1}^{z(\pi)}\left(maj\left(\pi^{(v)}\right)\right)^{2} \\ &= \sum_{x=1}^{m}k_{x}\cdot i_{k_{x}}-\sum_{v=1}^{z(\pi)}\left(maj\left(\pi^{(v)}\right)\right)^{2} \\ &= \sum_{x=1}^{m}k_{x}\cdot i_{k_{x}}-\sum_{v=1}^{z(\pi)}{{\left(c^{(v)}\right)}}^{2}, ~~ where ~~ c^{(v)}\in des\left(\pi\right).
\end{align*}
\end{itemize}
\end{theorem}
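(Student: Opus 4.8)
The three assertions will be handled in the order stated. The first is combinatorial; the second and third both rest on a single analytic fact, namely that the product of the elementary factors is length-additive, which is the technical heart of the argument.

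\smallskip

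\emph{Uniqueness and structure of the factorization.} Existence of a factorization meeting the conditions of Definition~\ref{canonical-factorization-def} can be obtained by a direct construction: working from the right of the canonical form $\pi=\prod_{j=1}^{m}t_{k_{j}}^{i_{k_{j}}}$, one lets each new factor reach back from its leading generator just far enough to satisfy the chaining lower bound, carrying exponents as in ordinary addition; I would check separately that this always terminates in a valid factorization and that the elementary bound $maj(\pi^{(v)})\le h^{(v)}_{1}$ then holds automatically (using $0\le i_{k}<k$ in the canonical form). For uniqueness and the structural bullets I would argue by induction on $z(\pi)$: in \emph{any} valid factorization the last factor $\pi^{(z(\pi))}$ is supported on a suffix of the list $t_{k_{1}}^{i_{k_{1}}},\dots,t_{k_{m}}^{i_{k_{m}}}$, and the two chaining inequalities surrounding it determine both which generators it uses and how it overlaps $\pi^{(z(\pi)-1)}$; in particular a value $k_{j'}$ can appear in two consecutive factors only as the trailing generator of $\pi^{(v-1)}$ and the leading generator of $\pi^{(v)}$, and then the two partial exponents must sum to $i_{k_{j'}}$. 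Peeling off $\pi^{(z(\pi))}$ and applying the inductive hypothesis yields uniqueness, and the three displayed structural properties are precisely what this analysis records.

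\smallskip

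\emph{Length additivity.} The key identity is $\ell(\pi)=\sum_{v=1}^{z(\pi)}\ell(\pi^{(v)})$, i.e.\ that concatenating reduced words of the factors produces a reduced word of $\pi$. I would prove it by induction on the number of factors, which reduces everything to the two-factor statement
\[
\ell(\alpha\cdot\beta)=\ell(\alpha)+\ell(\beta),
\]
with $\alpha=\pi^{(1)}$ elementary and $\beta=\pi^{(2)}\cdots\pi^{(z(\pi))}$, the essential hypothesis being $h^{(1)}_{m^{(1)}}\le maj(\pi^{(2)})$ (the largest index occurring in $\alpha$ is at most the $maj$ of the first elementary factor of $\beta$). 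Two routes seem feasible: (i) an inversion count---use the explicit one-line form $t_{m}^{i}=[\,m-i+1,\dots,m,1,\dots,m-i,m+1,\dots,n\,]$ to describe the one-line notations of $\alpha$, $\beta$ and $\alpha\beta$, and then invoke the criterion that $\ell(\alpha\beta)=\ell(\alpha)+\ell(\beta)$ holds exactly when no pair of positions is inverted by both $\alpha^{-1}$ and $\beta$, checking that the boundary hypothesis is precisely what disjoins these two inversion sets; or (ii) iterate the exchange laws of Proposition~\ref{exchange} and Remark~\ref{exchange-2} to commute the generators of $\alpha$ past those of $\beta$, accounting for the lengths of the elementary pieces that appear by Theorem~\ref{theorem-elementary}. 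I expect this step to be the main obstacle: \emph{inside} a single elementary element the analogous concatenation is strictly length-deficient (Theorem~\ref{theorem-elementary}), so the argument must isolate exactly why the chaining inequalities $h^{(v-1)}_{m^{(v-1)}}\le maj(\pi^{(v)})$ destroy all cancellation across a cut while permitting it within a factor.

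\smallskip

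\emph{The length formula and the descent set.} Granting length additivity, summing the formula of Theorem~\ref{theorem-elementary} over $v$ gives $\ell(\pi)=\sum_{v}\bigl(\sum_{j}h^{(v)}_{j}\imath_{j}^{(v)}\bigr)-\sum_{v}\bigl(maj(\pi^{(v)})\bigr)^{2}$, and the first double sum equals $\sum_{x=1}^{m}k_{x}i_{k_{x}}$ because a shared generator $k_{j'}$ contributes $k_{j'}\imath_{m^{(v-1)}}^{(v-1)}+k_{j'}\imath_{1}^{(v)}=k_{j'}i_{k_{j'}}$ while every other $k_{x}$ is used once with its full exponent; the last displayed line of the theorem is then only a restatement of $des(\pi)=\{\,maj(\pi^{(v)})\mid 1\le v\le z(\pi)\,\}$. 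For the descent set itself: the inclusion $\{\,maj(\pi^{(v)})\,\}\subseteq des(\pi)$ is immediate for $v=1$, since $\ell(s_{maj(\pi^{(1)})}\cdot\pi)\le\ell(s_{maj(\pi^{(1)})}\cdot\pi^{(1)})+\sum_{v\ge 2}\ell(\pi^{(v)})=\ell(\pi)-1$ by Theorem~\ref{theorem-elementary} and length additivity, while $\ell(s_{r}\cdot\pi)\ge\ell(\pi)-1$ always; for $v\ge 2$ one writes $\pi=\gamma\cdot\delta$ with $\gamma=\pi^{(1)}\cdots\pi^{(v-1)}$ and $\delta=\pi^{(v)}\cdots\pi^{(z(\pi))}$, both length-additive, and either inspects the one-line notation of $\pi$ near position $maj(\pi^{(v)})$ or runs a secondary induction. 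The reverse inclusion requires $\ell(s_{r}\cdot\pi)=\ell(\pi)+1$ for every other $r$; here I would read the descents of $\pi$ directly off its one-line notation, which by the structure established above is produced by interleaving the staircase patterns of the $t_{m}^{i}$, verifying that the chaining inequalities forbid any descent at a position other than the $maj(\pi^{(v)})$ and equally forbid the loss of any of them.
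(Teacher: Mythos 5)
First, note that the paper does not prove this theorem at all: it is quoted verbatim from \cite{S1}, so there is no internal proof to compare against; the only fair comparison is with the machinery the paper itself later invokes when it uses the theorem, namely the explicit reduced normal form of an elementary element (\cite{S1}, Theorem 28; reproduced here as Equation \ref{norm-standard}) and the fact that for an elementary factorization the normal forms of the factors concatenate to $norm(\pi)$ (this is used without comment in Proposition \ref{len-pi}, Theorem \ref{len-pi-wt} and Example \ref{general-length-example}).

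Measured against that, your sketch has a genuine gap exactly where you yourself locate it: the length additivity $\ell(\pi)=\sum_{v}\ell(\pi^{(v)})$ and the full determination of $des(\pi)$ are the substance of the theorem, and you only name candidate strategies for them. The subadditivity trick you give does prove $maj(\pi^{(1)})\in des(\pi)$, but it does not extend to $v\ge 2$: with the paper's conventions $des$ consists of \emph{left} descents, and left descents of a product $\gamma\cdot\delta$ do not localize to $\delta$, so writing $\pi=\gamma\cdot\delta$ and "inspecting the one-line notation or running a secondary induction" is precisely the step that needs an argument, not a placeholder. Likewise the reverse inclusion (no descents outside $\{maj(\pi^{(v)})\}$) and the additivity itself are deferred to an unexecuted inversion count or an unexecuted iteration of the exchange laws. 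The missing idea that closes all of these at once is to use Equation \ref{norm-standard}: each $\pi^{(v)}$ has an explicit reduced word consisting of decreasing runs $s_{u}s_{u-1}\cdots$, the chaining inequality $h^{(v-1)}_{m^{(v-1)}}\le maj(\pi^{(v)})$ guarantees that juxtaposing these words still satisfies the normal-form conditions of Definition \ref{sn} (\cite{BB}, Ch.\ 3.4), hence the concatenation is reduced; reducedness gives the length formula (then your bookkeeping for $\sum_x k_x i_{k_x}$, including the shared boundary generator, is fine), and reading the runs of the resulting normal form gives exactly the descent set $\{maj(\pi^{(v)})\}$. Your uniqueness paragraph is plausible in outline but also incomplete as written: the peeling argument must verify that merging the boundary exponents never leaves the canonical range (i.e.\ $\imath^{(v-1)}_{m^{(v-1)}}+\imath^{(v)}_{1}=i_{k_{j'}}<k_{j'}$) and that minimality of $z(\pi)$ forces the greedy choice of cut points; without these checks the three structural bullets are recorded but not derived.
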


The paper is organized as follows: In Section \ref{gen-stand-ogs-dn}, we recall some important definitions and basic properties concerning the group $D_{n}$, Then we generalize the definition of the standard $OGS$ and we show the arising exchange laws. In Section \ref{two-para-sub}, we focus on the generalized standard $OGS$ presentation of the elements in two interesting parabolic subgroups of $D_n$ which are isomorphic to $S_n$. In Section \ref{len-of-Dn}, we recall the definition of a normal form of element's of $D_n$, then we give an algorithm to find the Coxeter length of elements of $D_n$ by using the generalized standard $OGS$.

\section{Generalization of the standard OGS for the Coxeter group $D_{n}$}\label{gen-stand-ogs-dn}

First, we recall some important definitions and basic properties concerning the group $D_{n}$, as it defined in every book about Coxeter groups (for example \cite{BB})

\begin{definition}\label{def-dn}
Let $D_{n}$ be the Coxeter group with $n$ generators, with the following presentation:
$$
\left\langle s_{1^{\prime}}, s_{1}, \ldots, s_{n-1}\right| s_{i}^{2}=1, 
\left(s_{i} \cdot s_{i+1}\right)^{3}=1 \text { for } 1 \leq i \leq n-1, 
\left(s_{i} \cdot s_{j}\right)^{2}=1 \text { for }|i-j| \geq 2 ,$$
$$\left (s_{1^{\prime}} \cdot s_{2}\right)^{3}=1, \quad \left (s_{1^{\prime}} \cdot s_{i})^{2}=1\text { for } 1 \leq i \leq n-1 \text { and } i\neq 2,   \right\rangle$$
\end{definition}

\textbf{Basic properties of $D_n$}
\\
\begin{itemize}
\item The group $D_{n}$ is a subgroup of $B_n$,  therefore, it can be presented as a permutation group of the set $[\pm n]=\{\pm i \mid 1 \leq i \leq n\}]$ in such the following holds :  
\begin{itemize}
    \item $\pi(-i)=-\pi(i) \text { for every } i \in[\pm n] . $
    \item  $\{ i |i>0 \quad \text{and} \quad  \pi(i) < 0\} \quad \text{is even.}$
    \end{itemize}
\item  The group $D_{n}$, can be considered as even signed permutation group, where as a subgroup of $B_n$, every element 
$\pi\in D_n$ is uniquely determined by  $\pi(i)$ for $1\leq i\leq n$.
\item An even signed permutation $\pi\in D_n$ is denoted by 
$$
[\pi(1); ~\pi(2); \ldots; ~\pi(n)]
$$ 
 where the number of negative elements of the form $\pi(i)$, where $1\leq i\leq n$ is even. (e.g., $\pi=
[2; -4; 1; -3]
$ is a permutation in $D_{4}$, since for $i=1,2,3,4$, there are 2 (even number) elements of the form  $\pi(i)$  such that  $\pi(i)<0$, namely: $\pi(2)=-4$, $\pi(4)=-3$).
\item As in $S_n$, for $1\leq i\leq n-1$, the Coxeter generators $s_i$ can be considered the permutation which exchanges the element $i$ with the element $i+1$ where in addition exchanges $-i$ with $-(i+1)$ as well, and  $s_{{1}^{\prime}}$ is the permutation which exchanges the element $1$ with the element $-2$ and the element $2$ with the element $-1$.
\item 
$D_n\simeq\mathbb{Z}_2^{n-1}\rtimes S_n$.

\item $|D_n|=2^{n-1}\cdot n!$.
\end{itemize}
Now, we recall the definition of a normal form of elements of $D_{n}$ in Coxeter generators as it defined in \cite{BB} Chapter 3.4.

\begin{definition}\label{D_n-normal-form}

Every element $\pi \in D_{n}$ can be presented uniquely in the following normal form :
 $$\text{norm}(\pi)= \prod_{u=1}^{n-1} \text{norm}_{u}(\pi).$$
 where,
 $$\text{norm}_{u}(\pi) \in \prod_{r=0}^{y_{u}-1} s_{u-r} \quad \text{or} \quad \text{norm}_{u}(\pi) \in \prod_{r=0}^{u-2}s_{u-r} \cdot s_{1^{\prime}} \cdot \prod_{r=1}^{y_{u}^{\prime}} s_{r}.$$
 where:
 $$0 \leqslant y_{u} \leqslant u, \quad  0 \leqslant y_{u}^{\prime} \leqslant u.$$
 \end{definition}
\begin{example}
Let 
$$\pi=
[-2; ~-6; ~-5; ~4; ~-3; ~-7; ~8; ~-1].$$
Then,
$$\text{norm}(\pi) = s_{2} \cdot (s_{3} \cdot s_{2} \cdot s_{1} \cdot s_{1^{\prime}}) \cdot (s_{4} \cdot s_{3} \cdot s_{2}) \cdot (s_{5} \cdot s_{4} \cdot s_{3} \cdot s_{2} \cdot s_{1} \cdot s_{1^{\prime}} \cdot s_{2} \cdot s_{3} \cdot s_{4} \cdot s_{5})\\
\cdot (s_{7} \cdot s_{6} \cdot s_{5} \cdot s_{4} \cdot s_{3} \cdot s_{2} \cdot s_{1^{\prime}}). $$
Hence,

$$\text{norm}_{1}(\pi) = \text{norm}_{6}(\pi) = 1\quad \text{norm}_{2}(\pi)= s_{2}\quad \text{norm}_{3}(\pi) =  s_{3} \cdot s_{2} \cdot s_{1} \cdot s_{1^{\prime}}$$
$$\text{norm}_{4}(\pi) =  s_{4} \cdot s_{3} \cdot s_{2}\quad \text{norm}_{5}(\pi)= s_{5} \cdot s_{4} \cdot s_{3} \cdot s_{2} \cdot s_{1} \cdot s_{1^{\prime}} \cdot s_{2} \cdot s_{3} \cdot s_{4} \cdot s_{5}$$ $$\text{norm}_{7}(\pi)=s_{7} \cdot s_{6} \cdot s_{5} \cdot s_{4} \cdot s_{3} \cdot s_{2} \cdot s_{1^{\prime}}.$$

\end{example}

\subsection{The generalized standard OGS for $D_{n}$}

In this subsection we define the generalized standard $OGS$ for $D_n$. First, notice, the set of  generalized standard $OGS$ contains the standard $OGS$ for $S_n$ which is defined in \cite{S1}. \\

Therefore, first we recall the definition of $t_i$ as it defined in \cite{S1}.\\
\begin{definition}\cite{S1}
for $i= 2,3, \ldots,n$ let $t_{i}$ to be :
$$t_{i} = \prod_{j=1}^{n-1} s_{j}.$$
\end{definition}

\begin{remark}\cite{S1}
For every $2 \leqslant i \leqslant n$, the permutation presentation of $t_{i}$ satisfies the following properties:
$$\begin{array}{l}
t_{i}(1)=i \\
t_{i}(j)=j-1, \text { for } 2 \leq j \leq i \\
t_{i}(j)=j, \text { for } i+1 \leq j \leq n
\end{array}$$
\end{remark}

\begin{definition}\label{def w}
For $k=1,2,\ldots,n$ let $w_{k}$ to be :
$$w_{k} = s_{k}\cdot s_{k-1} \cdots s_{1}\cdot s_{1^{\prime}} \cdot s_{2} \cdot s_{3} \cdots s_{k}$$
\end{definition}

\begin{lemma}\label{order-w}
For every $1 \leqslant k \leqslant n-1$,
$$w_{k}^{2}=1.$$
\end{lemma}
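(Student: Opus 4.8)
The statement to prove is that $w_k^2 = 1$ for $1 \le k \le n-1$, where $w_k = s_k s_{k-1}\cdots s_1 s_{1'} s_2 s_3 \cdots s_k$. The natural approach is to work in the signed-permutation realization of $D_n$ described in the "Basic properties" section, compute the permutation $w_k$ explicitly, and observe it is an involution; this is cleaner than a purely Coxeter-relation-based braid manipulation, though I will sketch both.

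First I would identify $w_k$ as a permutation of $[\pm n]$. Recall $s_i$ transposes $i \leftrightarrow i+1$ (and $-i \leftrightarrow -(i+1)$), while $s_{1'}$ sends $1 \leftrightarrow -2$ and $2 \leftrightarrow -1$. Using the left-to-right multiplication convention, I would track the images of the generators $1, 2, \ldots, k, k+1$ through the word. The block $s_k s_{k-1}\cdots s_1$ is a "descending" run, the element $s_{1'}$ is the sign-flipping swap, and $s_2 s_3 \cdots s_k$ is an "ascending" run; composing these, I expect $w_k$ to act as the signed transposition that exchanges $1$ with $-(k+1)$ and $k+1$ with $-1$, while fixing $2, 3, \ldots, k$ (and acting compatibly on negatives by the rule $\pi(-i) = -\pi(i)$). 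Concretely: $w_k(1) = -(k+1)$, $w_k(k+1) = -1$, $w_k(j) = j$ for $2 \le j \le k$, and $w_k(j) = j$ for $j > k+1$. Once this description is in hand, squaring is immediate: $w_k^2(1) = w_k(-(k+1)) = -w_k(k+1) = -(-1) = 1$, $w_k^2(k+1) = w_k(-1) = -w_k(1) = k+1$, and all other coordinates are fixed, so $w_k^2 = 1$. I should also check this is genuinely an element of $D_n$ (an even number of sign changes among positive coordinates), which it is, since exactly two coordinates get negated.

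An alternative, purely combinatorial route is to show $w_k$ is conjugate to $s_{1'}$ (or to a reflection) by the element $g = s_k s_{k-1}\cdots s_1$, i.e. $w_k = g^{-1} s_{1'} g$ after noting $g^{-1} = s_1 s_2 \cdots s_k$ — but the word as written is $g'\, s_{1'}\, h$ with $g' = s_k\cdots s_1$ and $h = s_2 \cdots s_k$, and $h \ne (g')^{-1}$, so this conjugation identity needs the commutation relations $(s_1 s_j)^2 = 1$ for $j \ge 3$ and $(s_{1'} s_i)^2 = 1$ for $i \ne 2$ to massage indices; this is doable but fiddlier than the permutation computation, so I would relegate it to a remark.

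The main obstacle is purely bookkeeping: getting the signs and the index shifts right when composing the three runs in the correct (left-to-right) order, especially the interaction of $s_{1'}$ with the surrounding runs — it is easy to be off by one in the index $k+1$ or to misplace a minus sign. I would guard against this by first verifying the cases $k = 1$ (where $w_1 = s_1 s_{1'} s_2$, and one checks directly $w_1(1) = -2$, $w_1(2) = -1$, hence $w_1^2 = 1$) and $k = 2$ by hand, then proving the general formula for $w_k(j)$ by a short induction on $k$ (noting $w_{k} = s_k\, w_{k-1}\, s_k$), after which the relation $w_k^2 = 1$ follows formally.
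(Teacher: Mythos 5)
Your overall strategy (realize $w_k$ as a signed permutation and square it) is workable, but the central computation you base it on is wrong. The element $w_k=s_k\cdots s_1\cdot s_{1^{\prime}}\cdot s_2\cdots s_k$ is \emph{not} the signed transposition exchanging $1$ with $-(k+1)$ and $k+1$ with $-1$; it is the element that negates the two coordinates $1$ and $k+1$ and fixes everything else, i.e.\ $w_k(1)=-1$, $w_k(k+1)=-(k+1)$, $w_k(j)=j$ otherwise (this is exactly the paper's Remark \ref{perm-w}). Your base case already shows the slip: for $k=1$ the trailing run $s_2\cdots s_k$ is empty, so $w_1=s_1\cdot s_{1^{\prime}}$ (not $s_1\cdot s_{1^{\prime}}\cdot s_2$), and a direct check in either multiplication convention gives $w_1(1)=-1$, $w_1(2)=-2$, not $w_1(1)=-2$, $w_1(2)=-1$ (the latter is just $s_{1^{\prime}}$ itself). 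Your parity sanity check (two sign changes) cannot detect this, since both candidate elements negate two coordinates, and it so happens that both are involutions, so the final claim $w_k^2=1$ is not contradicted --- but the proof as written rests on a false identification of $w_k$, and that identification is used elsewhere in the paper, so it must be corrected. The induction $w_k=s_k\cdot w_{k-1}\cdot s_k$ you propose is valid and, carried out correctly, yields the double-sign-change description, after which squaring is immediate. Your suggested alternative route is worse than ``fiddly'': since $w_k$ negates two coordinates it is not a reflection of $D_n$, hence it is not conjugate to $s_{1^{\prime}}$, and no identity of the form $w_k=g^{-1}\cdot s_{1^{\prime}}\cdot g$ can hold.

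For comparison, the paper's proof avoids permutations entirely: writing $w_k^2$ as the concatenated word, the middle $s_2\cdots s_k\cdot s_k\cdots s_2\cdot s_1$ telescopes by $s_i^2=1$, and one is left with $s_1\cdot s_{1^{\prime}}\cdot s_1\cdot s_{1^{\prime}}$ flanked by $s_k\cdots s_2$ and $s_2\cdots s_k$; since $\left(s_{1^{\prime}}\cdot s_1\right)^2=1$ (the generators commute), everything cancels and $w_k^2=1$. That two-line cancellation is both shorter and safer than the bookkeeping your route requires; if you prefer the permutation route, fix the action of $w_k$ as above and the rest goes through.
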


\begin{proof}

$$w_{k}^{2} = $$ $$= (s_{k} \cdot s_{k-1} \cdots s_{1}\cdot s_{1^{\prime}} \cdot  s_{2} \cdots s_{k-1} \cdot s_{k}) (\cdot s_{k} \cdot s_{k-1} \cdots s_{1}\cdot s_{1^{\prime}} \cdot  s_{2} \cdots s_{k-1} \cdot s_{k})=$$ $$=s_1\cdot s_{1^\prime}\cdot s_1\cdot s_{1^\prime}=s_1\cdot s_{1^\prime}\cdot s_{1^\prime}\cdot s_1=1.$$
\end{proof}

\begin{remark}\label{perm-w}
For every $1 \leqslant k \leqslant n$, the permutation presentation of $w_{k}$ satisfies the following properties: 
$$\begin{array}{l}
w_{k}(1)=-1 \\
w_{k}(k+1)=-(k+1) \\
w_{k}(j)=j, \text { for } 2\leqslant j \leqslant k \quad \text{and} \quad k+2\leqslant j \leqslant n.
\end{array}$$
\end{remark}

\begin{theorem}\label{ogs-dn}
 Every $ \pi \in D_{n}$ has a unique presentation in the form :
$$\pi = w_{1}^{j_{1}} \cdot t_{2}^{i_{2}} \cdot w_{2}^{j_{2}} \cdot t_{3}^{i_{3}} \cdots w_{n-1}^{j_{n-1}} \cdot t_{n}^{i_{n}}$$

Where $$0 \leqslant i_{k} \leqslant k-1 \quad \text{and} \quad 0 \leqslant j_{k} \leqslant 1$$
\end{theorem}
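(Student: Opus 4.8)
The plan is to prove existence and uniqueness separately, and to exploit the counting identity $|D_n| = 2^{n-1}\cdot n!$ so that one of the two only needs to be checked. First I would recall from Theorem~\ref{canonical-sn} that the $t_k$'s already give an $OGS$ for a copy of $S_n$ sitting inside $D_n$: the subgroup $\langle t_2,\dots,t_n\rangle = \langle s_1,\dots,s_{n-1}\rangle$ is the standard parabolic $S_n$, and every element of it has a unique expression $t_2^{i_2}\cdots t_n^{i_n}$ with $0\le i_k<k$. Since $D_n \simeq \mathbb{Z}_2^{n-1}\rtimes S_n$, it suffices to understand how the $w_k^{j_k}$ contribute the $\mathbb{Z}_2^{n-1}$ part and interact with the $t_k$'s.

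The key structural step is to identify the sign-change data. Using Remark~\ref{perm-w}, $w_k$ is the even signed permutation fixing $\{2,\dots,k\}\cup\{k+2,\dots,n\}$ pointwise and negating both $1$ and $k+1$; equivalently, in the $\mathbb{Z}_2^{n}$-coordinates (reading off the signs of $\pi(1),\dots,\pi(n)$) $w_k$ acts as the vector $e_1+e_{k+1}$. Thus $w_1,\dots,w_{n-1}$ project onto a basis of the even-weight subspace $\{v\in\mathbb{Z}_2^n : \sum v_i = 0\}$, which is exactly the $\mathbb{Z}_2^{n-1}$ of sign changes available in $D_n$. So the map $(j_1,\dots,j_{n-1},i_2,\dots,i_n)\mapsto w_1^{j_1}t_2^{i_2}w_2^{j_2}\cdots w_{n-1}^{j_{n-1}}t_n^{i_n}$ has domain of size $2^{n-1}\cdot n! = |D_n|$. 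Hence it is enough to prove the map is surjective, or alternatively injective; I would go for surjectivity as the cleaner route.

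For surjectivity, take an arbitrary $\pi\in D_n$ and write $\pi = \varepsilon\cdot\sigma$ where $\sigma\in S_n$ is the underlying (unsigned) permutation and $\varepsilon\in\mathbb{Z}_2^{n-1}$ records the signs. By Theorem~\ref{canonical-sn} write $\sigma = t_2^{i_2}\cdots t_n^{i_n}$. It remains to produce exponents $j_k\in\{0,1\}$ so that $w_1^{j_1}t_2^{i_2}w_2^{j_2}t_3^{i_3}\cdots w_{n-1}^{j_{n-1}}t_n^{i_n}$ carries the prescribed sign vector. Here I would argue inductively from the right: conjugating the block $w_k^{j_k}$ past the tail $t_{k+1}^{i_{k+1}}\cdots t_n^{i_n}$ turns it into some other signed permutation whose unsigned part is trivial (since $w_k$ has trivial unsigned part and $S_n$-conjugation preserves that), so the total unsigned permutation really is $\sigma$ regardless of the $j_k$'s, and the $j_k$'s only move the sign vector around by an invertible affine-linear action of $(\mathbb{Z}_2)^{n-1}$ on $(\mathbb{Z}_2)^{n-1}$. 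Since the $w_k$ project to a basis and conjugation by $\sigma$ is a linear automorphism of $\mathbb{Z}_2^{n-1}$, the achievable sign vectors form a full coset, so every $\varepsilon$ is hit; one then reads off the $j_k$ uniquely.

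I expect the main obstacle to be the bookkeeping in that last step: making precise how $w_k$, when moved through $t_{k+1}^{i_{k+1}}\cdots t_n^{i_n}$ to bring the expression into the stated left-to-right order $w_1^{j_1}t_2^{i_2}w_2^{j_2}t_3^{i_3}\cdots$, transforms — one needs a clean formula for $t_m^{-1} w_k t_m$ (or the relevant one-sided version dictated by the ordering convention) as an element of the sign group, and then to verify that the resulting triangular-type system in the $j_k$ over $\mathbb{Z}_2$ is invertible. Once the conjugation action of each $t_m$ on the sign vectors $e_1+e_{k+1}$ is written down explicitly using the permutation description of $t_m$ in the Remark after Definition of $t_i$, invertibility should follow from a rank/triangularity observation, and uniqueness of the $j_k$ then comes for free from the cardinality match. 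As a sanity check I would verify the formula against the worked $\mathrm{norm}(\pi)$ example with $\pi=[-2;-6;-5;4;-3;-7;8;-1]$.
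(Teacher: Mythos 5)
Your route is genuinely different from the paper's. The paper proves Theorem~\ref{ogs-dn} by induction on $n$: it takes the parabolic subgroup $\dot{D}_{n-1}=\langle s_{1^{\prime}},s_1,\ldots,s_{n-2}\rangle\cong D_{n-1}$, notes that a right coset of $\dot{D}_{n-1}$ in $D_n$ is determined by the value $\pi(n)$, and checks that the $2n$ elements $w_{n-1}^{j_{n-1}}t_n^{i_n}$ realize the $2n$ possible values $\pm(n-i_n)$, so they form a transversal and the canonical form assembles inductively. You instead argue globally: match cardinalities ($2^{n-1}\cdot n!=|D_n|$), split $\pi$ into its unsigned part $\pi^{\prime}=t_2^{i_2}\cdots t_n^{i_n}$ (Theorem~\ref{canonical-sn}) and its sign vector, and show that for fixed $i_k$ the choice of $(j_1,\ldots,j_{n-1})$ realizes every even-weight sign vector. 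This avoids induction and makes the structure $D_n\cong\mathbb{Z}_2^{n-1}\rtimes S_n$ explicit, at the price of the $\mathbb{Z}_2$-linear bookkeeping you yourself flag as the main obstacle.

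That bookkeeping is the one place your write-up needs repair, because the justification you give is not sufficient as stated: the blocks $w_k^{j_k}$ are conjugated by \emph{different} partial products of the $t$'s, not by a single $\sigma$, and applying different automorphisms to different members of a basis need not return a basis. The clean fix is to push each $w_k^{j_k}$ to the \emph{left}, past the prefix $T_k=t_2^{i_2}\cdots t_k^{i_k}$, rather than past the tail as you propose. Since every $t_j$ with $j\le k$ fixes all points greater than $k$, so does $T_k$, and hence the conjugate of $w_k$ by $T_k$ is (by Remark~\ref{perm-w}) the sign change supported on $\{b_k,\,k+1\}$ for some $b_k\le k$ (the image or preimage of $1$ under $T_k$, according to the composition convention). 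The vectors $e_{b_k}+e_{k+1}$, $1\le k\le n-1$, are triangular in the coordinate $k+1$, hence linearly independent over $\mathbb{Z}_2$, hence a basis of the $(n-1)$-dimensional even-weight subspace; surjectivity follows, and uniqueness of the $j_k$ (and of the $i_k$, via $\Phi$ and Corollary~\ref{w-reduce}) comes from your cardinality count. If instead you conjugate $w_k$ through the tail $t_{k+1}^{i_{k+1}}\cdots t_n^{i_n}$, the support becomes the image of $\{1,k+1\}$ under a permutation that moves both points, and no triangular shape is visible. With that correction your argument is complete and is a valid alternative to the paper's coset induction.
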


\begin{proof}
The proof is by induction on $n$. for $n=2$, it is easy to see that $D_2$ is generated by $w_1$ and  $t_2$.  Since $D_2$ is a non-cyclic  abelian  group of order $4$, every element of $D_2$ has a unique presentation in the form $w_1^{j_1}\cdot t_2^{i_2}$, where $j_1, i_2\in\{0,1\}$. Now, assume by induction that theorem holds for every $k$ such that $k\leq n-1$, i.e., every element of $D_k$ has a unique presentation in the form
$$w_1^{j_1}\cdot t_2^{i_2}\cdot w_2^{j_2}\cdot t_3^{i_3}\cdots w_{k-1}^{j_{k-1}}\cdot t_{k}^{i_{k}}.$$ 
Denote by $\dot{D}_{n-1}$ the parabolic subgroup of $D_n$ generated by $s_1^{\prime}, s_1, \ldots s_{n-2}$. Easy to see that $\dot{D}_{n-1}$ is isomorphic to $D_{n-1}$ (which satisfy the theorem by the induction hypothesis). Notice, that in the permutation presentation of $D_n$, every element $x\in \dot{D}_{n-1}$ satisfies $x(n)=n$. Now, look at the right cosets of $\dot{D}_{n-1}$ in $D_n$. There are $2n$ different right cosets, where every two elements $x$ and $y$ in the same right coset of $\dot{D}_{n-1}$ in $D_n$ satisfy $x(n)=y(n)$. 
Now, notice that for $0\leq i_n<n$, the elements of the form $t_n^{i_n}$ and $w_{n-1}\cdot t_n^{i_n}$ satisfy the following properties:

 $$t_{n}^{i_n}(n)=n-i_n\quad \quad w_{n-1}\cdot t_{n}^{i_n}(n)=-(n-i_n).$$

Hence, for $0\leq i_n\leq n-1$, $0\leq j_{n-1}\leq 1$, the elements of the form  $w_{n-1}^{j_{n-1}}\cdot t_{n}^{i_n}$  gives the $2n$ different images of $n$ in the permutation presentation. Hence, for $0\leq i_n\leq n-1$, $0\leq j_{n-1}\leq 1$, we have the following $2n$ different right cosets of $\dot{D}_{n-1}$ in $D_n$ :
$$\dot{D}_{n-1}w_{n-1}^{j_{n-1}}\cdot t_{n}^{i_n}.$$
Then the result of the theorem holds for $k=n$.
\end{proof}

\begin{example}
Consider the element $\pi=
[-2; ~-1; ~-4; ~-3]
$ of $D_4$. Now, we construct the $OGS$ presentation of the element, as it described in Theorem \ref{ogs-dn}.
First, notice $\pi(4)=-3$. Since, $-3<0$ we have $j_3=1$, and hence $t_4^{i_4}(-4)=-3$, which applies  $i_4=4-3=1$. Thus we conclude:
$$\pi\in \dot{D}_3 ~w_3\cdot t_4.$$
Now, look at $\pi(3)=-4$. Notice, that $[w_3\cdot t_4](1)=-4$. Hence, we look for  $j_2$ and $i_3$, such that $[w_2^{j_2}\cdot t_3^{i_3}](3)=1$. Since $1>0$, we have $j_2=0$, and hence $t_3^{i_3}(3)=1$,  which applies $i_3=3-1=2$. 
Thus we conclude: 
$$\pi\in \dot{D}_2 ~t_3^2\cdot w_3\cdot t_4.$$
Continuing by the same process, looking at $\pi(2)=-1$. 
Notice, that \\ $[t_3^2\cdot w_3\cdot t_4](-1)=-1$. Hence, we look for $j_1$ and $i_2$, such that $[w_1^{j_1}\cdot t_2^{i_2}](2)=-1$. Since $-1<0$, we have $j_1=1$, and hence $t_2^{i_2}(2)=1$, which applies $i_2=2-1=1$. 
Thus we conclude: 
$$\pi = w_1\cdot t_2\cdot t_3^2\cdot w_3\cdot t_4.$$

\end{example}

\begin{remark}
We call the presentation of elements of $D_n$ which has been  shown in Theorem \ref{ogs-dn}  the generalized standard $OGS$ presentation of $D_n$.
\end{remark}

We show by Theorem \ref{bk-in-dn} a presentation of a subgroup of $D_n$ which is isomorphic to $B_m$ for every $m<n$, by using the generalized standard $OGS$ of $D_n$.

\begin{theorem}\label{bk-in-dn}
 The elements  $w_1^{j_1}\cdot t_{2}^{i_2}\cdots t_{m}^{i_m}\cdot w_{m}^{j_m}$, such that $m$ is a positive integer less than $n$, ~$0\leq j_k\leq 1$ and $0\leq i_k\leq k-1$ form a subgroup of $D_n$ which is isomorphic to $B_{m}$.
\end{theorem}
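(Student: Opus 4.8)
The plan is to recognize the set
\[
H=\{\,w_{1}^{j_{1}}\cdot t_{2}^{i_{2}}\cdot w_{2}^{j_{2}}\cdots t_{m}^{i_{m}}\cdot w_{m}^{j_{m}}\ :\ 0\leq j_{k}\leq 1,\ 0\leq i_{k}\leq k-1\,\}
\]
as a concrete subgroup of $D_{n}$ by comparing it with the generalized standard $OGS$ of a smaller parabolic. Since $m<n$, the generator $s_{m}$ belongs to $D_{n}$, so $\dot{D}_{m+1}:=\langle s_{1^{\prime}},s_{1},\ldots,s_{m}\rangle$ is a standard parabolic subgroup of $D_{n}$ isomorphic to $D_{m+1}$ (exactly as with $\dot{D}_{n-1}$ in the proof of Theorem \ref{ogs-dn}), and it contains every word occurring in $H$, because $t_{k}$ (for $k\leq m+1$) and $w_{k}$ (for $k\leq m$) are by definition words in $s_{1^{\prime}},s_{1},\ldots,s_{m}$ and agree with the corresponding elements of $D_{n}$. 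Applying Theorem \ref{ogs-dn} to $\dot{D}_{m+1}$, every element of $\dot{D}_{m+1}$ has a unique expression $w_{1}^{j_{1}}t_{2}^{i_{2}}\cdots w_{m}^{j_{m}}t_{m+1}^{i_{m+1}}$; hence $H$ is precisely the set of those elements with $i_{m+1}=0$, and the uniqueness part of Theorem \ref{ogs-dn} gives $|H|=2^{m}\cdot m!=|B_{m}|$.

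The main step is to describe $H$ intrinsically: I claim $H=\{\pi\in\dot{D}_{m+1}\ :\ \pi(m+1)=\pm(m+1)\}$. To prove this, write $\pi\in\dot{D}_{m+1}$ in its $OGS$ form and trace the image of $m+1$ through the left-to-right product. Using the permutation descriptions of $t_{k}$ and $w_{k}$ (Remark \ref{perm-w}), each factor $w_{k}^{j_{k}}$ with $k\leq m-1$ and each factor $t_{k}^{i_{k}}$ with $k\leq m$ fixes $m+1$; the factor $w_{m}^{j_{m}}$ sends $m+1$ to $(-1)^{j_{m}}(m+1)$; and $t_{m+1}^{i_{m+1}}$, being an unsigned permutation, commutes past the sign and sends this to $(-1)^{j_{m}}(m+1-i_{m+1})$. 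Thus $\pi(m+1)=(-1)^{j_{m}}(m+1-i_{m+1})$, which equals $\pm(m+1)$ exactly when $i_{m+1}=0$, i.e.\ exactly when $\pi\in H$.

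Given this description, $H$ is the stabilizer, inside $\dot{D}_{m+1}$, of the subset $\{m+1,-(m+1)\}$ for the natural action of $D_{n}$ on $\{\pm 1,\ldots,\pm n\}$, hence a subgroup of $D_{n}$. To identify it with $B_{m}$, let $\rho\colon H\to B_{m}$ send $\pi$ to its restriction to $\{\pm 1,\ldots,\pm m\}$; this is well defined because every $\pi\in H$ fixes $m+2,\ldots,n$ and stabilizes $\{m+1,-(m+1)\}$, and it is plainly a group homomorphism. It is injective, since an element of $D_{n}$ that fixes $1,\ldots,m$ cannot have a single negative value and so must be the identity; and it is surjective, since any $\sigma\in B_{m}$ extends to an element of $H$ by acting as the identity on $m+1,\ldots,n$ when $\sigma$ has an even number of negative values and by additionally flipping the sign of $m+1$ otherwise (which then lands in $D_{n}\cap\dot{D}_{m+1}$). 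Hence $\rho$ is an isomorphism and $H\cong B_{m}$.

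No conceptual obstacle is expected here; the step requiring the most care is the middle paragraph — verifying that every $OGS$ factor preceding $w_{m}$ genuinely fixes $m+1$, and keeping the composition order and the signs straight. The degenerate case $m=1$ (where $H=\{1,w_{1}\}\cong B_{1}$ and $w_{1}$ already moves coordinate $m+1=2$) is best checked directly, though the formula $\pi(m+1)=(-1)^{j_{m}}(m+1-i_{m+1})$ remains valid there.
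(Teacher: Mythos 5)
Your proof is correct, and it reaches the same permutation-level identification as the paper (restriction to $[\pm m]$, with the sign of $m+1$ forced by the even-sign-change condition of $D_n$), but it is organized differently. The paper stays at rank $m$: it takes $\dot{D}_m=\langle s_{1^{\prime}},s_1,\ldots,s_{m-1}\rangle\cong D_m$, multiplies by $w_m^{j_m}$, and checks that the resulting elements, restricted to $[\pm m]$, realize every signed permutation exactly once, with the parity bookkeeping ($j_m=0$ giving an even number of negatives on $1,\ldots,m$ and $j_m=1$ an odd number); the closure of the set under multiplication is then only implicit in this bijection. You instead embed the set in the rank-$(m+1)$ parabolic $\dot{D}_{m+1}\cong D_{m+1}$, invoke Theorem \ref{ogs-dn} there to identify it as the slice $i_{m+1}=0$ (whence the count $2^m\cdot m!$ by uniqueness), and characterize it intrinsically as $\{\pi\in\dot{D}_{m+1}:\pi(m+1)=\pm(m+1)\}$ via the computation $\pi(m+1)=(-1)^{j_m}(m+1-i_{m+1})$; this stabilizer description gives the subgroup property explicitly, which is the main thing your route buys over the paper's, while the paper's route is slightly more economical in not needing the larger parabolic. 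One small point in your surjectivity step: the assertion that the extension of $\sigma\in B_m$ lands in $\dot{D}_{m+1}$ (i.e.\ that every element of $D_n$ fixing $m+2,\ldots,n$ pointwise lies in that parabolic) deserves a word of justification; alternatively it can be skipped entirely, since injectivity of $\rho$ together with your count $|H|=2^m\cdot m!=|B_m|$ already forces $\rho$ to be onto.
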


\begin{proof}
The proof is by induction on $m$. For $m=1$, the group generated by $w_1$ is cyclic of order $2$, which is isomorphic to $B_1$. Notice, by Theorem \ref{ogs-dn}, the subgroup which is generated by $w_1, t_2, w_2, \ldots, w_{m-1}, t_m$, denoted by $\dot{D}_m$ is isomorphic to $D_m$, where the permutation presentation of every element $x\in \dot{D}_m$ satisfies the following properties:
\begin{itemize}
    \item $x(k)=k$ for every $m+1\leq k\leq n$;
    \item $|\{1\leq k\leq m | x(k)<0\}|$ is even. 
\end{itemize}
Notice, the permutation presentation $w_m^{j_m}$ where $j_m=0$ or $j_m=1$
satisfies: 
\begin{itemize}
    \item Since $w_m^0$ is the identity, $w_m^0(k)=k$ for every $1\leq k\leq n$;
    \item $w_m^1(1)=-1\quad w_m^1(m+1)=-(m+1)\quad w_m^1(k)=k$ for $2\leq k\leq  n$, where  $k\neq m+1$.
\end{itemize}
Therefore, we have that the permutation presentation of every element of the form $y=x\cdot w_m^{j_m}$, where $x\in \dot{D}_m$ satisfies $y(k)=k$ for $m+2\leq k\leq n$, and either $y(m+1)=m+1$ or $y(m+1)=-(m+1)$. 
Since $y\in \dot{D}_{m+1}$, $|\{1\leq k\leq m+1 | y(k)<0\}|$ is even. Hence, we conclude the following properties:
\begin{itemize}
    \item If $y(m+1)=m+1$ (i.e., $j_m=0$), $|\{1\leq k\leq m | y(k)<0\}|$ is even;
    \item If $y(m+1)=-(m+1)$ (i.e., $j_m=1$), $|\{1\leq k\leq m | y(k)<0\}|$ is odd.
\end{itemize}
Hence, the permutation presentation of the elements of the form \\ $w_1^{j_1}\cdot t_{2}^{i_2}\cdots t_{m}^{i_m}\cdot w_{m}^{j_m}$ reduced to the set $[\pm m]$, such that $m$ is a positive integer less than $n$, ~$0\leq j_k\leq 1$ and $0\leq i_k\leq k-1$ for every $1\leq k\leq m$, contains once every sign permutation of $[\pm m]$. Hence, the subgroup is isomorphic to $B_m$.
\end{proof}

Now, we define a natural homomorphism from $D_n$ to $S_n$ by identifying $s_1^{\prime}$ and $s_1$, and then we see the connection of the generalized standard $OGS$ of $D_n$ to the standard $OGS$ of the homomorphic image $S_n$.

\begin{definition}\label{hom-dn-sn}
Consider the natural homomorphism $\Phi$ from $D_{n}$ to $S_{n}$, such that
\begin{itemize}

    \item $\Phi(s_{1^{\prime}})=s_{1}$
    \item $\Phi(s_{i})= s_{i} , \quad \text{where} \quad i \neq 1^{\prime}$
\end{itemize}
Then denote $$\Phi(\pi) := \pi^{\prime}.$$

Notice, $\pi^{\prime}$ is the permutation of $S_n$ which satisfies:
$$\pi^{\prime}(j)=|\pi(j)|, \text { for } 1\leqslant j \leqslant n$$
\end{definition}

\begin{corollary}\label{w-reduce}
  Let $\pi\in D_n$ be presented by the generalized standard $OGS$ as it is described in Theorem \ref{ogs-dn}:
   
   $$\pi = w_{1}^{j_{1}} \cdot t_{2}^{i_{2}} \cdot w_{2}^{j_{2}} \cdot t_{3}^{i_{3}} \cdots w_{n-1}^{j_{n-1}} \cdot t_{n}^{i_{n}}$$
Then, $$\pi^{\prime} = t_{2}^{i_{2}}\cdots t_{n}^{i_{n}}.$$
\end{corollary}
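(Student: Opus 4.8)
The approach is simply to push the given presentation through the homomorphism $\Phi$ and evaluate it factor by factor, using that $\Phi$ is multiplicative. The only two computations needed are the images of the $t_m$'s and the images of the $w_k$'s.

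First I would note that $t_m=\prod_{j=1}^{m-1}s_j$ is a word in $s_1,\dots,s_{m-1}$, none of which is $s_{1'}$; hence by Definition \ref{hom-dn-sn} the map $\Phi$ fixes every letter of this word, so $\Phi(t_m)$ is the element $\prod_{j=1}^{m-1}s_j$ of $S_n$, i.e. the standard $OGS$ generator $t_m$ of $S_n$ as in Theorem \ref{canonical-sn}. Thus $\Phi(t_m^{i_m})=t_m^{i_m}$ in $S_n$. Next I would compute $\Phi(w_k)$: applying $\Phi$ to $w_k=s_k\cdots s_1\cdot s_{1'}\cdot s_2\cdots s_k$ and using $\Phi(s_{1'})=s_1$ gives the word $s_k\cdots s_2\cdot s_1\cdot s_1\cdot s_2\cdots s_k$ in $S_n$; the two central $s_1$'s cancel since $s_1^2=1$, then the $s_2$'s, and so on, collapsing inward — exactly the cancellation already performed in the proof of Lemma \ref{order-w} — so $\Phi(w_k)=1$, and therefore $\Phi(w_k^{j_k})=1$ for $j_k\in\{0,1\}$.

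Putting these together, $\Phi$ applied to the presentation of Theorem \ref{ogs-dn} yields $\pi'=\Phi(w_1^{j_1})\cdot\Phi(t_2^{i_2})\cdots\Phi(w_{n-1}^{j_{n-1}})\cdot\Phi(t_n^{i_n})=t_2^{i_2}\cdot t_3^{i_3}\cdots t_n^{i_n}$, as claimed; since the exponents satisfy $0\le i_m\le m-1$, this is in fact the standard $OGS$ canonical form of $\pi'$ in $S_n$ by the uniqueness in Theorem \ref{canonical-sn}.

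There is no genuinely hard step here: the one point requiring care is that $\Phi$ really is a well-defined homomorphism, but that is exactly the content of Definition \ref{hom-dn-sn} (one verifies that the defining relations of $D_n$ are sent to relations of $S_n$ after identifying $s_{1'}$ with $s_1$). Beyond that, the proof is a single multiplicativity computation together with the telescoping collapse $\Phi(w_k)=1$, which reuses the argument of Lemma \ref{order-w}.
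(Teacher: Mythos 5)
Your proposal is correct and follows essentially the same route as the paper: apply the homomorphism $\Phi$ factorwise, observe that $\Phi(w_k)=s_k\cdots s_1\cdot s_1\cdots s_k=1$ while $\Phi(t_m^{i_m})=t_m^{i_m}$, and conclude by multiplicativity. The extra remarks you add (that $\Phi$ fixes each $t_m$ because no letter $s_{1'}$ occurs, and that the resulting word is the canonical form by Theorem \ref{canonical-sn}) are harmless elaborations of the same argument.
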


\begin{proof}
by Definition \ref{def w}, $w_k=s_k\cdot s_{k-1}\cdots s_2\cdot s_1\cdot s_1^{\prime}\cdot s_{2}\cdots s_k$, for every $1\leq k\leq n-1$. Then, by Definition \ref{hom-dn-sn}, $$\Phi(w_k)=\Phi(s_k\cdot s_{k-1}\cdots s_2\cdot s_1\cdot s_1^{\prime}\cdot s_{2}\cdots s_k)=s_k\cdot s_{k-1}\cdots s_2\cdot s_1\cdot s_1\cdot s_{2}\cdots s_k=1.$$
Hence, in case 
$$\pi = w_{1}^{j_{1}} \cdot t_{2}^{i_{2}} \cdot w_{2}^{j_{2}} \cdot t_{3}^{i_{3}} \cdots w_{n-1}^{j_{n-1}} \cdot t_{n}^{i_{n}}$$
by Definition \ref{hom-dn-sn}, we get that
$$\pi^{\prime}=\Phi(\pi)= w_{1}^{j_{1}} \cdot t_{2}^{i_{2}} \cdot w_{2}^{j_{2}} \cdot t_{3}^{i_{3}} \cdots w_{n-1}^{j_{n-1}} \cdot t_{n}^{i_{n}}=t_{2}^{i_{2}}\cdot t_3^{i_3}\cdots t_{n}^{i_{n}}. $$
\end{proof}

Now we show the exchange laws which arise from the generalized standard $OGS$ of $D_n$.\\

\begin{proposition}\label{exchange-dn}
Consider the elements of the group $D_n$ expressed by the generalized standard $OGS$ presentation. Assume $q>p$, then the following exchange laws holds:

First, we look at the exchange law of $t_{q}^{i_{q}}\cdot t_{p}^{i_p}$ as it defined in \cite{S1} ;\\

     Now, we show the exchange laws for products which involves generator of a form $w_k$.

\begin{itemize}
    \item  $w_{q}\cdot w_{p} = w_{p} \cdot w_{q}.$ 
     \item $t_{q}^{i_{q}} \cdot w_{p}=\left\{\begin{array}{ll} w_{i_{q}} \cdot t_{q}^{i_{q}} & q= p + i_{q} \\
     w_{i_{q}} \cdot w_{p+i_{q}} \cdot t_{q}^{i_{q}} & q > p + i_{q} \\
     w_{p+i_{q}-q} \cdot w_{i_{q}} \cdot t_{q}^{i_{q}} & q < p + i_{q}
     \end{array}\right.
     $
    \item $w_{q} \cdot t_{p}^{i_{p}} = w_{i_{p}} \cdot t_{p}^{i_{p}} \cdot w_{q}.$ \quad where \quad $p<q$
    \end{itemize}
    \end{proposition}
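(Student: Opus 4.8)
The plan is to verify each identity directly in the signed-permutation presentation of $D_n$, using that every element of $D_n$ is determined by the images of $1,2,\dots,n$. Two facts are used repeatedly. First, by Remark \ref{perm-w}, $w_k$ is the ``diagonal'' involution negating the coordinates $1$ and $k+1$ and fixing every other $j$ with $2\le j\le n$; equivalently $w_k$ is the sign change with support the two-element set $\{1,k+1\}$ and trivial unsigned part. Second, $t_m$ is the unsigned permutation acting on $\{1,\dots,m\}$ by the cyclic shift $j\mapsto j-1$ (with $1\mapsto m$) and fixing $\{m+1,\dots,n\}$, so $t_m^{a}$ is the shift by $-a$ modulo $m$ on $\{1,\dots,m\}$. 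Since multiplication is read left to right, $(\alpha\cdot\beta)(i)=\beta(\alpha(i))$, and hence for an unsigned $\sigma$ and a sign change $\varepsilon$ supported on $S$ the conjugate $\sigma\cdot\varepsilon\cdot\sigma^{-1}$ is again a pure sign change, supported on $\sigma^{-1}(S)$; likewise $\varepsilon\cdot\sigma\cdot\varepsilon$ has unsigned part $\sigma$, with its sign flipped exactly at the coordinates $i$ for which precisely one of $i$, $\sigma(i)$ lies in $S$.

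For the first identity I would simply note that $w_q\cdot w_p$ and $w_p\cdot w_q$ have trivial unsigned part and negate a coordinate $i$ iff $i$ is negated an odd number of times along the way; since $w_q$ negates $\{1,q+1\}$, $w_p$ negates $\{1,p+1\}$, and $p\ne q$, in both products the coordinate $1$ is negated twice and $p+1,q+1$ once each. So both equal the sign change supported on $\{p+1,q+1\}$, hence are equal. (Since $p<q$, this also happens to be the generalized standard $OGS$ form of the product.)

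For the second identity, write $t_q^{i_q}\cdot w_p=\bigl(t_q^{i_q}\,w_p\,t_q^{-i_q}\bigr)\cdot t_q^{i_q}$. By the conjugation remark above (with $\sigma=t_q^{i_q}$) the bracketed factor is the sign change supported on $\bigl\{\,t_q^{-i_q}(1),\ t_q^{-i_q}(p+1)\,\bigr\}$. Now $t_q^{-i_q}$ is the shift by $+i_q$ modulo $q$ on $\{1,\dots,q\}$, so $t_q^{-i_q}(1)=i_q+1$ (using $i_q\le q-1$), while $t_q^{-i_q}(p+1)$ (legitimate since $p+1\le q$) equals $p+i_q+1$, or $1$, or $p+i_q+1-q$ according to whether $q>p+i_q$, $q=p+i_q$, or $q<p+i_q$. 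In the case $q=p+i_q$ the support is $\{1,i_q+1\}$, i.e. exactly $w_{i_q}$, giving $t_q^{i_q}\cdot w_p=w_{i_q}\cdot t_q^{i_q}$. In the other two cases the support is a two-element set avoiding $1$, and such a sign change is the product of two $w$'s sharing the coordinate $1$ (which cancels): it equals $w_{i_q}\cdot w_{p+i_q}$, resp. $w_{i_q}\cdot w_{p+i_q-q}$, and reordering the two commuting $w$-factors via the first identity puts it into the stated order. The third identity is equivalent, via $w_q^2=1$ (Lemma \ref{order-w}), to $w_q\cdot t_p^{i_p}\cdot w_q=w_{i_p}\cdot t_p^{i_p}$; here $w_q\cdot t_p^{i_p}\cdot w_q$ has unsigned part $t_p^{i_p}$ and flips the sign exactly at the coordinates $i$ with precisely one of $i$, $t_p^{i_p}(i)$ in $\{1,q+1\}$, which — since $p<q$ puts $q+1$ outside the support $\{1,\dots,p\}$ of $t_p^{i_p}$ — are exactly $i\in\{1,(t_p^{i_p})^{-1}(1)\}=\{1,i_p+1\}$; as the unsigned parts of both sides equal $t_p^{i_p}$, this matches $w_{i_p}\cdot t_p^{i_p}$ and the identity follows.

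The only genuine work is the bookkeeping: respecting the left-to-right convention, reading $p+1+i_q$ (and the analogous indices) modulo $q$ to separate the three cases, handling degenerate values — in particular reading $w_0$ as the identity when an exponent vanishes, and checking that every $w$-index that appears lies in the admissible range $1\le\cdot\le n-1$ — and observing that the orderings of the $w$-factors chosen in the statement are exactly those making each right-hand side a valid generalized standard $OGS$ presentation. I do not expect any conceptual difficulty beyond this casework.
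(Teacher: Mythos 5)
Your proposal is correct and follows essentially the same route as the paper: both verify each law by direct computation in the signed-permutation presentation of $D_n$, splitting the middle identity into the same three cases according to $p+i_q$ versus $q$. Your packaging via conjugation (computing $t_q^{i_q}w_pt_q^{-i_q}$ and $w_qt_p^{i_p}w_q$ as sign changes with explicit support) is just a tidier way of organizing the same image-tracking the paper carries out.
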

\begin{proof}
\begin{itemize}
    \item First, consider the exchange law for $w_{q}\cdot w_{p}$. \\
    
    By looking at the permutation presentation of $w_p$ and $w_q$ we get :
    $$ w_{q}=
   [-1; ~2; ~3;\ldots; ~p; ~p+1; \ldots; ~q; ~-(q+1); ~q+2; ~q+3;\ldots; ~n].
    $$
   
    $$
    w_{p}=
   [-1; ~2; ~3; \ldots; ~p; ~-(p+1); ~p+2;\ldots; ~q+1; \ldots; ~n].
    $$
    Hence we get:
    $$
    w_{q}\cdot w_{p}=[-1; ~2; ~3;\ldots; ~p; ~-(p+1); ~p+2 \ldots; ~q; ~-(q+1); ~q+2; ~q+3;\ldots; ~n].
    $$
    Then obviously:
    $$w_{q}\cdot w_{p} = w_{p} \cdot w_{q}.$$
   \item  Now, consider the exchange law for $t_{q}^{i_{q}} \cdot w_{p}.$
   
   By looking at the permutation presentation of  $w_{p}$:$$1 \rightarrow -1,\quad \quad  p+1 \rightarrow -(p+1)$$
   $$j \rightarrow j \quad \text{for} \quad j\neq 1 \quad \text{or} \quad j\neq p+1.$$
   and by looking at the permutation presentation of $t_{q}^{i_{q}}$:
   $$i_{q}+1\rightarrow 1$$ 
   Then we get the following three subcases:
   \begin{itemize}
       \item $p+i_{q} < q$;
       \item $p+i_{q} > q$;
       \item $p+i_{q}=q$.
   \end{itemize}
   The case $p+i_{q} < q$: $$p+i_{q}+1 \leqslant q$$ 
   $$p+i_{q}+1 \rightarrow p+1.$$
   
   The case $p+i_{q} > q$:
   $$p+i_{q} - q+1 \rightarrow p+1.$$
   
   The  case  $p+i_{q}=q$:
   $$i_{q}+1 \rightarrow 1 \rightarrow -1 \rightarrow -(i_{q}+1)$$
   $$1\rightarrow p+1 \rightarrow -(p+1) \rightarrow -1.$$
   \item First, consider the permutation presentation of $w_{q}$ and $t_{p}^{i_{p}}$
   $$
   w_{q}=
   [-1; ~2; ~3; \ldots; ~q; ~-(q+1); ~q+2; ~q+3;\ldots; ~n]
  $$
  $$
  t_p^{i_p}=[p+1-i_p; ~p+2-i_p; \ldots; ~p; ~1; ~2;\ldots; ~q;  ~q+1;\ldots  ~n].
  $$
  
  $$
  w_{q} \cdot t_{p}^{i_{p}}=[-(p+1-i_p); ~p+2-i_p; \ldots; ~p; ~1; ~2;\ldots; ~q;  ~-(q+1); ~q+2; \ldots  ~n]
   $$
   $$w_{q} \cdot t_{p}^{i_{p}} = t_{p}^{i_{p}} \cdot w_{p-i_{p}} \cdot w_{q}$$ 
   Now, from the previous exchange laws, we have :
    $$w_{q} \cdot t_{p}^{i_{p}} = w_{i_{p}} \cdot t_{p}^{i_{p}} \cdot w_{q}.$$

\end{itemize}

\end{proof}

In Theorem \ref{ogs-dn}, we introduce a generalized standard $OGS$ for the group $D_n$, which involves two types of generators $t_k$ for $2\leq k\leq n$, and $w_L$ \\ for $1\leq L\leq n-1$. In Definition \ref{hom-dn-sn}, there was introduced a natural homomorphism  $\Phi$ from $D_n$ onto $S_n$, where by Corollary \ref{w-reduce},  the presentation of $\pi^{\prime}=\Phi(\pi)$ is derived from the presentation of $\pi$ by omitting the generators of type $w_L$ from the generalized standard $OGS$ presentation of $\pi$. Hence, it motivates us to find other properties of the group $D_n$ by considering the elements of the group presented in different forms by using $t_k$ and $w_L$. We show by the next theorem that all the elements of $D_n$ which can be expressed as product of powers of $t_k$ (i.e., $t_{k_{1}}^{i_{k_1}}\cdot t_{k_2}^{i_{k_2}}\cdots t_{k_m}^{i_{k_m}}$) or as a product of elements of the form  $w_L$ for $1\leq L\leq n-1$ (i.e., $w_{L_1}\cdot w_{L_2}\cdots w_{L_m}$)  form two important subgroups of $D_n$. We show that the mentioned two subgroups are strongly connected to the homomorphism $\Phi$ from $D_n$ onto $S_n$.
 \begin{theorem}\label{subgroup-t-w}
Consider the group $D_n$, with the generalized standard $OGS$ presentation as it is presented in Theorem \ref{ogs-dn}. 
$$\pi = w_{1}^{j_{1}} \cdot t_{2}^{i_{2}} \cdot w_{2}^{j_{2}} \cdot t_{3}^{i_{3}} \cdots w_{n-1}^{j_{n-1}} \cdot t_{n}^{i_{n}}$$

where $$0 \leqslant i_{k} \leqslant k-1 \quad \text{and} \quad 0 \leqslant j_{k} \leqslant 1$$
Then the following holds:
\begin{itemize}
    \item All the elements such that $j_{k}=0$ for every $1\leq k\leq n-1$ form a subgroup which is isomorphic to $S_n$, and contains all  elements $\pi\in D_n$ such that the generalized standard $OGS$ presentation of $\pi$, as it described in Theorem \ref{ogs-dn}, is the same to the standard $OGS$ presentation of  $\pi^{\prime}=\pi$, as it is described in Theorem \ref{canonical-sn}. (i.e.,The parabolic subgroup of $D_n$, which is generated by $\{s_1, s_2, \ldots, s_{n-1}\}$). We denoted the mentioned subgroup by $S^{\circ}_n$. 
    \item All the elements such that $i_{k}=0$ for every $2\leq k\leq n$ form a subgroup which is isomorphic to $\mathbb{Z}_2^{n-1}$,  and contains all elements $\pi\in D_n$ such that  $\pi^{\prime}=1$. We denote the mentioned subgroup by $Id^{\bullet}_n$.
    
\end{itemize}
  
\end{theorem}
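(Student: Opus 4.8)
The plan is to prove each of the two bullet points separately, in each case verifying three things: (i) the described set is closed under multiplication (and inverses), hence a subgroup; (ii) it is isomorphic to the claimed group; (iii) it coincides with the stated intrinsic characterization in terms of $\Phi$. For the first bullet I would start from the observation that the parabolic subgroup $\langle s_1,\dots,s_{n-1}\rangle$ of $D_n$ is a Coxeter group of type $A_{n-1}$, hence isomorphic to $S_n$, and that under this isomorphism the generator $s_j\in D_n$ maps to $s_j\in S_n$; therefore the element $t_m=\prod_{j=1}^{m-1}s_j$ of $D_n$ corresponds exactly to the element $t_m$ of $S_n$ from Theorem~\ref{canonical-sn}. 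Consequently the elements with all $j_k=0$ are precisely $t_2^{i_2}\cdot t_3^{i_3}\cdots t_n^{i_n}$ with $0\le i_k\le k-1$, which by Theorem~\ref{canonical-sn} is exactly one element for each member of $S_n$, and they form a subgroup isomorphic to $S_n$; call it $S^{\circ}_n$. For the intrinsic description, note that for $\pi\in S^{\circ}_n$ we have $\pi(j)>0$ for all $j$ (each $s_j$ and hence each $t_m$ preserves positivity of the first-$n$ coordinates in the signed-permutation model, or alternatively use Remark~\ref{perm-w} to see the $w_k$ are the only source of sign changes), so $\pi'=\Phi(\pi)$ satisfies $\pi'(j)=|\pi(j)|=\pi(j)$; combined with Corollary~\ref{w-reduce} (which gives $\pi'=t_2^{i_2}\cdots t_n^{i_n}$ when all $j_k=0$) this shows the generalized standard $OGS$ form of such a $\pi$ literally equals the standard $OGS$ form of $\pi'$, and conversely any $\pi$ with $\pi=\pi'$ must have all $j_k=0$ by uniqueness in Theorem~\ref{ogs-dn} together with the fact that $w_k$ flips a sign.

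For the second bullet, the elements with all $i_k=0$ are the products $w_1^{j_1}\cdot w_2^{j_2}\cdots w_{n-1}^{j_{n-1}}$ with $j_k\in\{0,1\}$. By Lemma~\ref{order-w} each $w_k$ has order $2$, and by the first exchange law in Proposition~\ref{exchange-dn} the $w_k$ pairwise commute; hence this set is closed under multiplication and inversion, is a subgroup, and the map sending a tuple $(j_1,\dots,j_{n-1})$ to $\prod_k w_k^{j_k}$ is a surjective homomorphism from $\mathbb{Z}_2^{n-1}$ onto it. Injectivity is immediate from the uniqueness statement in Theorem~\ref{ogs-dn} (these are already in generalized standard $OGS$ form, with all $i_k=0$), so the subgroup — call it $Id^{\bullet}_n$ — is isomorphic to $\mathbb{Z}_2^{n-1}$. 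Finally, $\Phi(w_k)=1$ for every $k$ by the computation in the proof of Corollary~\ref{w-reduce}, so every $\pi\in Id^{\bullet}_n$ has $\pi'=1$; conversely if $\pi'=1$ then by Corollary~\ref{w-reduce} $t_2^{i_2}\cdots t_n^{i_n}=1$ in $S_n$, and by the uniqueness of the standard $OGS$ of $S_n$ (Theorem~\ref{canonical-sn}) all $i_k=0$, so $\pi\in Id^{\bullet}_n$.

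The routine parts are the subgroup closure computations, which are handled entirely by Lemma~\ref{order-w} and Proposition~\ref{exchange-dn} for the $w$-subgroup, and by Theorem~\ref{canonical-sn} (applied inside the type-$A$ parabolic) for the $t$-subgroup. The one point that deserves care — and which I expect to be the main obstacle — is the precise justification that an element $\pi\in D_n$ satisfies $\pi=\pi'$ (equivalently $\pi(j)>0$ for all $1\le j\le n$) \emph{if and only if} all $j_k=0$ in its generalized standard $OGS$ form. The "if" direction needs the claim that each $t_m$, viewed as an even signed permutation, keeps all first-$n$ images positive, while the "only if" direction needs an argument that a nontrivial product of $w_k$'s interleaved with $t_m$'s must produce at least one negative image; the cleanest route is to push all the $w$'s to the left using the exchange laws of Proposition~\ref{exchange-dn}, write $\pi = (w_{L_1}\cdots w_{L_r})\cdot(t_2^{i_2}\cdots t_n^{i_n})$, observe the second factor lies in $S^{\circ}_n$ hence is sign-free, and then use Remark~\ref{perm-w} to track which coordinates the leading product of $w$'s sends to negative values, noting this product is nontrivial exactly when not all $j_k$ vanish — appealing again to the uniqueness in Theorem~\ref{ogs-dn} to rule out cancellation. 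Once this equivalence is established, both intrinsic characterizations follow immediately.
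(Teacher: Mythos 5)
Your proposal is correct, and its skeleton (treat the two bullets separately; use Theorem \ref{canonical-sn}, Corollary \ref{w-reduce}, Lemma \ref{order-w} and Proposition \ref{exchange-dn} for the subgroup structure) matches the paper's, but two steps are handled by genuinely different means. For the converse of the second bullet you argue: $\pi^{\prime}=1$ forces $t_2^{i_2}\cdots t_n^{i_n}=1$ in $S_n$ by Corollary \ref{w-reduce}, hence all $i_k=0$ by the uniqueness in Theorem \ref{canonical-sn}; the paper instead reconstructs $\pi$ explicitly as $w_{k_1-1}\cdots w_{k_q-1}$ from its signed-permutation form via Remark \ref{perm-w} and the parity condition. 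Your route is shorter and arguably cleaner, while the paper's gives the explicit $w$-word, which it reuses in spirit later (Corollary \ref{wt-perm}). For the first bullet you prove the stronger intrinsic statement that all $j_k=0$ if and only if $\pi$ has all positive images, by pushing the $w$'s to the left and invoking uniqueness in Theorem \ref{ogs-dn} to rule out total cancellation; the paper only needs (and only proves) the literal claim that if some $j_k\neq 0$ then the generalized standard $OGS$ word of $\pi$ differs from the standard $OGS$ word of $\pi^{\prime}$, which is immediate from the visible occurrence of $w_k$. Your extra argument is sound (the $t$-exponents are unchanged when the $w$'s are moved left, so a vanishing $w$-part would yield a second canonical form), with one small caveat: moving $w_k$ past $t_k^{i_k}$ needs the relation $t_k^{i_k}\cdot w_k=w_{i_k}\cdot w_k\cdot t_k^{i_k}$ with equal indices, which Proposition \ref{exchange-dn} as stated (assuming $q>p$) does not literally cover, although the relation does hold and the paper itself uses the $p\leq q$ version inside Lemma \ref{tw-standard}; alternatively, for the theorem as literally stated this stronger sign characterization is not required at all.
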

\begin{proof}
Consider the generalized standard $OGS$ presentation of $D_n$, where every element $\pi\in D_n$ is presented in the following form: 
$$\pi = w_{1}^{j_{1}} \cdot t_{2}^{i_{2}} \cdot w_{2}^{j_{2}} \cdot t_{3}^{i_{3}} \cdots w_{n-1}^{j_{n-1}} \cdot t_{n}^{i_{n}}$$

where $$0 \leqslant i_{k} \leqslant k-1 \quad \text{and} \quad 0 \leqslant j_{k} \leqslant 1.$$

The elements of $D_n$ which we get by considering $j_k=0$ for $1\leq k\leq n-1$, are of the form
\begin{equation}\label{pure-t}
t_2^{i_2}\cdot t_3^{i_3}\cdots t_n^{i_n}
\end{equation}
where, $$0 \leqslant i_{k} \leqslant k-1\quad \text{for}\quad 2\leq k\leq n.$$
Hence, by Theorem \ref{canonical-sn}, the set of all elements of $D_n$ which  generalized standard $OGS$ presentation can be expressed by Equation \ref{pure-t} form a subgroup of $D_n$ which is isomorphic to $S_n$. Now, by Theorem \ref{canonical-sn}, $t_k=\prod_{r=1}^{k-1} s_r$,  for $2\leq k\leq n$. Hence, there is no occurrence of the Coxeter generator $s_1^{\prime}$ in the normal form of $t_k$ for $2\leq k \leq n$. Thus by Corollary \ref{w-reduce}, the generalized standard $OGS$ presentation of $\pi$ is the same to the standard $OGS$ presentation of $\pi^{\prime}$, and by considering $\pi$ as an element of $S_n$, $\pi$ presents the same permutation as $\pi^{\prime}$.\\

Moreover, If there is $1\leq k\leq n-1$ such that $j_k\neq 0$ in the generalized standard $OGS$ presentation of an element $\pi\in D_n$, then by  Corollary \ref{w-reduce},  the standard $OGS$ presentation of $\pi^{\prime}$ (as it is described in Theorem \ref{canonical-sn}) is different from the generalized standard $OGS$ presentation of $\pi$ (as it is described in Theorem \ref{ogs-dn}), since there is an appearance of $w_k$ in the generalized standard $OGS$ presentation of an element $\pi\in D_n$, which does  not occur in the standard $OGS$ presentation of $\pi^{\prime}$.   \\ 

Now, consider the elements of $D_n$ such that $i_k=0$ for $2\leq n$ in generalized standard $OGS$ presentation. Then, we get the elements of the form:
\begin{equation}\label{pure-w}
w_1^{j_1}\cdot w_2^{j_2}\cdots w_{n-1}^{j_{n-1}} 
\end{equation}
where, $$0 \leqslant j_{k} \leqslant 1\quad \text{for}\quad 1\leq k\leq n-1.$$
Notice, by Proposition \ref{exchange-dn}, $w_q\cdot w_p=w_p\cdot w_q$ for every $1\leq p, q\leq n-1$, and by Lemma \ref{order-w}, $w_k^2=1$ for every $1\leq k\leq n-1$. Therefore, the elements of $D_n$ which generalized standard $OGS$ presentation can be expressed by Equation \ref{pure-w}  form an abelian group which is isomorphic to $\mathbb{Z}_2^{n-1}.$
Now, by Corollary \ref{w-reduce}, every element $\pi\in D_n$ such that the generalized standard $OGS$ presentation of it can be expressed by Equation \ref{pure-w} satisfies $\pi^{\prime}=1$.
Now, we prove that the generalized standard $OGS$ presentation of an element $\pi$ such that $\pi^{\prime}=1$ can be expressed by Equation \ref{pure-w}. We prove it by the permutation presentation of $\pi$. By Definition \ref{hom-dn-sn}, if $\pi^{\prime}=1$, then $|\pi(k)|=k$ for every $1\leq k\leq n$. Hence, either $\pi(k)=k$ or $\pi(k)=-k$ for every $1\leq k\leq n$. Let $k_1, k_2, \ldots k_q$ be $q$ integers such that
\begin{itemize}
    \item $2\leq k_1<k_2<\ldots < k_q\leq n$.
    \item $\pi(k_1)=-k_1, \quad \pi(k_2)=-k_2,\quad \ldots,\quad \pi(k_q)=-k_q$.
    \item If $k\neq k_p$ for any $1\leq p\leq q$ and $2\leq k\leq n$, then $\pi(k)=k$.
\end{itemize}
Now, since  by Definition \ref{def-dn}, every $\pi$ such that $\pi^{\prime}=1$, satisfies that the number of $k$ such that $\pi(k)=-k$ for $1\leq k\leq n$ is even, $\pi(1)$ is determined by the parity of $q$ (i.e, if $q$ is even then $\pi(1)=1$ and if $q$ is odd then $\pi(1)=-1$.
By Remark \ref{perm-w}, for every $1\leq k\leq n-1$, $w_k$ satisfies the following  $$\begin{array}{l}
w_{k}(1)=-1 \\
w_{k}(k+1)=-(k+1) \\
w_{k}(j)=j, \text { for } 2\leqslant j \leqslant k \quad \text{and} \quad k+2\leqslant j \leqslant n.
\end{array}.$$
Thus we get that 
$$\pi=w_{k_1-1}\cdot w_{k_2-1}\cdots w_{k_q-1}.$$
Hence, every $\pi\in D_n$ such that $\pi^{\prime}=1$,  the generalized standard $OGS$ presentation of  $\pi$ can be expressed by Equation \ref{pure-w}.

\end{proof}

\begin{corollary}\label{twtw}
Let $\pi\in D_n$, then $\pi$ can be presented in the form:
$$\pi=\pi_{1}^{\circ}\cdot \pi_{1}^{\bullet}\cdot \pi_{2}^{\circ}\cdot \pi_{2}^{\bullet}\cdots \pi_{\mu-1}^{\circ}\cdot \pi_{\mu-1}^{\bullet}\cdot \pi_{\mu}^{\circ}.$$
  such that the following holds:
  
  \begin{itemize}
            \item  For $1\leq j\leq \mu$, ~$\pi_{j}^{\circ}\in S^{\circ}_n$ (as it is defined in Theorem \ref{subgroup-t-w}) such that :
            \begin{itemize}
                \item For $j=1$ or $j=\mu$, either $\pi_{j}^{\circ}=1$ or the generalized standard $OGS$ presentation of $\pi_{j}^{\circ}$ has the following form $$\pi_{j}^{\circ}=t_{k_{r_{j-1}+1}}^{i_{k_{r_{j-1}+1}}}\cdots t_{k_{r_j}}^{i_{k_{r_j}}}$$ for some integer $r_j$;
                 \item For $2\leq j\leq \mu-1$, the generalized standard $OGS$ presentation of $\pi_{j}^{\circ}$ has the following form: $$\pi_{j}^{\circ}=t_{k_{r_{j-1}+1}}^{i_{k_{r_{j-1}+1}}}\cdots t_{k_{r_j}}^{i_{k_{r_j}}}$$ for some integer $r_j$,
            \end{itemize}
            \item For $1\leq j\leq \mu-1$, ~$\pi_{j}^{\bullet}\in Id^{\bullet}_n$ (as it is defined in Theorem \ref{subgroup-t-w}) such that:
            \begin{itemize}
                \item The generalized standard $OGS$ presentation of $\pi_{j}^{\bullet}$ has the following form:
            $$\pi_{j}^{\bullet}=w_{L_{j_1}}\cdot w_{L_{j_2}}\cdots w_{L_{j_{\nu_j}}},$$ where $\nu_j$ denoted the number of non-zero powers of elements of the form $w_{L_j}$ in the generalized standard $OGS$ presentation (as it is described in Theorem \ref{ogs-dn}) of $\pi_j^{\bullet}$, ~ $L_{j_1}\geq k_{r_j}$,  ~$L_{j_u}>L_{j_{u-1}}$ for every $2\leq u\leq \nu_j$, and $L_{j_{\nu_j}}<t_{k_{r_j}+1}$.
            \end{itemize}
            
            \end{itemize}

  Then, the generalized standard $OGS$ presentation of $\pi$ is presented as follow:
  \begin{itemize}
      \item In case $\pi_1^{\circ}\neq 1$ and $\pi_{\mu}^{\circ}\neq 1$:
  
  $$\pi= t_{k_{1}}^{i_{k_{1}}} \cdots t_{k_{r_{1}}}^{i_{k_{r_{1}}}} \cdot w_{L_{1_1}} \cdots w_{L_{1_{\nu_{1}}}} \cdot t_{k_{r_{1}+1}}^{i_{k_{r_{1}+1}}} \cdots t_{k_{r_{2}}}^{i_{k_{r_{2}}}}\cdot w_{L_{2_1}} \cdots w_{L_{2_{\nu_{2}}}} \cdot t_{k_{r_{2}+1}}^{i_{k_{r_{2}}+1}} \cdots $$ $$\cdots  t_{k_{r_{\mu-1}}}^{i_{k_{r_{\mu-1}}}} \cdot  w_{L_{{\mu-1}_1}} \cdots w_{L_{{\mu-1}_{\nu_{\mu-1}}}} \cdot t_{k_{r_{\mu-1}+1}}^{i_{k_{r_{\mu-1}+1}}} \cdots t_{k_{r_{\mu}}}^{i_{k_{r_{\mu}}}}.$$

and then,

 $$\pi^{\prime}=\pi_{1}^{\circ}\cdot \pi_{2}^{\circ}\cdots \pi_{\mu}^{\circ}=t_{k_{1}}^{i_{k_{1}}} \cdots t_{k_{r_{1}}}^{i_{k_{r_{1}}}} \cdot t_{k_{r_{1}+1}}^{i_{k_{r_{1}+1}}} \cdots t_{k_{r_{2}}}^{i_{k_{r_{2}}}}\cdots t_{k_{r_{\mu}}}^{i_{k_{r_{\mu}}}}.$$
 
  \item In case $\pi_1^{\circ}\neq 1$ and $\pi_{\mu}^{\circ}=1$:
 
  $$\pi= t_{k_{1}}^{i_{k_{1}}} \cdots t_{k_{r_{1}}}^{i_{k_{r_{1}}}} \cdot w_{L_{1_1}} \cdots w_{L_{1_{\nu_{1}}}} \cdot t_{k_{r_{1}+1}}^{i_{k_{r_{1}+1}}} \cdots t_{k_{r_{2}}}^{i_{k_{r_{2}}}}\cdot w_{L_{2_1}} \cdots w_{L_{2_{\nu_{2}}}} \cdot t_{k_{r_{2}+1}}^{i_{k_{r_{2}}+1}} \cdots $$ $$\cdots  t_{k_{r_{\mu-1}}}^{i_{k_{r_{\mu-1}}}} \cdot  w_{L_{{\mu-1}_1}} \cdots w_{L_{{\mu-1}_{\nu_{\mu-1}}}}.$$

and then,

 $$\pi^{\prime}=\pi_{1}^{\circ}\cdot \pi_{2}^{\circ}\cdots \pi_{\mu}^{\circ}=\pi_{1}^{\circ}\cdot \pi_{2}^{\circ}\cdots \pi_{\mu-1}^{\circ} =t_{k_{1}}^{i_{k_{1}}} \cdots t_{k_{r_{1}}}^{i_{k_{r_{1}}}} \cdot t_{k_{r_{1}+1}}^{i_{k_{r_{1}+1}}} \cdots t_{k_{r_{2}}}^{i_{k_{r_{2}}}}\cdots t_{k_{r_{\mu-1}}}^{i_{k_{r_{\mu-1}}}}.$$

 \item In case $\pi_1^{\circ}=1$ and $\pi_{\mu}^{\circ}\neq 1$ (Notice, in this case $r_1=0$):
  
  $$\pi = w_{L_{1_1}} \cdots  w_{L_{1_{\nu_{1}}}} \cdot t_{k_{r_{1}+1}}^{i_{k_{r_{1}+1}}} \cdots t_{k_{r_{2}}}^{i_{k_{r_{2}}}}\cdot w_{L_{2_1}} \cdots w_{L_{2_{\nu_{2}}}} \cdot t_{k_{r_{2}+1}}^{i_{k_{r_{2}}+1}} \cdots $$ $$\cdots  t_{k_{r_{\mu-1}}}^{i_{k_{r_{\mu-1}}}} \cdot  w_{L_{{\mu-1}_1}} \cdots w_{L_{{\mu-1}_{\nu_{\mu-1}}}} \cdot t_{k_{r_{\mu-1}+1}}^{i_{k_{r_{\mu-1}+1}}} \cdots t_{k_{r_{\mu}}}^{i_{k_{r_{\mu}}}}.$$

and then,

 $$\pi^{\prime}=\pi_{1}^{\circ}\cdot \pi_{2}^{\circ}\cdots \pi_{\mu}^{\circ}=\pi_{2}^{\circ}\cdot \pi_{3}^{\circ}\cdots \pi_{\mu}^{\circ} =t_{k_{1}}^{i_{k_{1}}} \cdots t_{k_{r_{2}}}^{i_{k_{r_{2}}}} \cdot t_{k_{r_{2}+1}}^{i_{k_{r_{2}+1}}} \cdots t_{k_{r_{2}}}^{i_{k_{r_{2}}}}\cdots t_{k_{r_{\mu}}}^{i_{k_{r_{\mu}}}}.$$

 \item In case $\pi_1^{\circ}=1$ and $\pi_{\mu}^{\circ}=1$ (Notice, in this case $r_1=0$):
  
  $$\pi = w_{L_{1_1}} \cdots w_{L_{1_{\nu_{1}}}} \cdot t_{k_{r_{1}+1}}^{i_{k_{r_{1}+1}}} \cdots t_{k_{r_{2}}}^{i_{k_{r_{2}}}}\cdot w_{L_{2_1}} \cdots w_{L_{2_{\nu_{2}}}} \cdot t_{k_{r_{2}+1}}^{i_{k_{r_{2}}+1}} \cdots $$ $$\cdots  t_{k_{r_{\mu-1}}}^{i_{k_{r_{\mu-1}}}} \cdot  w_{L_{{\mu-1}_1}} \cdots w_{L_{{\mu-1}_{\nu_{\mu-1}}}}.$$

and then,

 $$\pi^{\prime}=\pi_{1}^{\circ}\cdot \pi_{2}^{\circ}\cdots \pi_{\mu}^{\circ}=\pi_{2}^{\circ}\cdot \pi_{3}^{\circ}\cdots \pi_{\mu-1}^{\circ} =t_{k_{1}}^{i_{k_{1}}} \cdots t_{k_{r_{2}}}^{i_{k_{r_{2}}}} \cdot t_{k_{r_{2}+1}}^{i_{k_{r_{2}+1}}} \cdots t_{k_{r_{2}}}^{i_{k_{r_{2}}}}\cdots t_{k_{r_{\mu-1}}}^{i_{k_{r_{\mu-1}}}}.$$

\end{itemize}

\end{corollary}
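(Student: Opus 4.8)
The plan is to read off the claimed presentation directly from the generalized standard $OGS$ presentation of $\pi$ given in Theorem \ref{ogs-dn}, by deleting the trivial factors and regrouping the surviving ones; no exchange law is needed. Write
$$\pi = w_{1}^{j_{1}} \cdot t_{2}^{i_{2}} \cdot w_{2}^{j_{2}} \cdot t_{3}^{i_{3}} \cdots w_{n-1}^{j_{n-1}} \cdot t_{n}^{i_{n}},$$
and record the fixed linear order of the $2n-1$ ``slots'' $w_{1}, t_{2}, w_{2}, t_{3}, \dots, w_{n-1}, t_{n}$, in which $w_{k}$ lies immediately between $t_{k}$ and $t_{k+1}$. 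Call a slot \emph{active} if its exponent is non-zero. Deleting the inactive slots does not change $\pi$, and the sequence of active slots decomposes uniquely into maximal runs of consecutive slots of the same type; I declare the maximal $t$-runs to be $\pi_{1}^{\circ}, \pi_{2}^{\circ}, \dots$ (in left-to-right order) and the maximal $w$-runs to be $\pi_{1}^{\bullet}, \pi_{2}^{\bullet}, \dots$. Since any two maximal $t$-runs are separated by at least one active $w$-slot (the slot $t_{k}$ is followed by $w_{k}$ and then $t_{k+1}$), the $t$-runs and $w$-runs alternate; after prepending an empty $t$-block $\pi_{1}^{\circ}=1$ when the first active slot is a $w$-slot, and appending an empty $t$-block $\pi_{\mu}^{\circ}=1$ when the last active slot is a $w$-slot, we obtain exactly $\mu$ $t$-blocks and $\mu-1$ $w$-blocks arranged as $\pi_{1}^{\circ}\cdot \pi_{1}^{\bullet}\cdots \pi_{\mu-1}^{\bullet}\cdot \pi_{\mu}^{\circ}$, with $r_{1}=0$ precisely in the case $\pi_{1}^{\circ}=1$.

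Next I would verify the membership and ordering conditions. Each $\pi_{j}^{\circ}$ is a product $t_{k_{a}}^{i_{k_{a}}}\cdots t_{k_{b}}^{i_{k_{b}}}$ of powers of the generators $t_{k}$ with strictly increasing indices inherited from the slot order, hence $\pi_{j}^{\circ}\in S^{\circ}_{n}$ by Theorem \ref{subgroup-t-w}; likewise each $\pi_{j}^{\bullet}$ is a product $w_{L_{j_{1}}}\cdots w_{L_{j_{\nu_{j}}}}$ of distinct $w_{L}$ with strictly increasing indices, hence $\pi_{j}^{\bullet}\in Id^{\bullet}_{n}$, and maximality of the run forces $L_{j_{u}}>L_{j_{u-1}}$. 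The index inequalities are forced by the slot order: the first $w$-slot occurring after $t_{k_{r_{j}}}$ is $w_{k_{r_{j}}}$, so $L_{j_{1}}\geq k_{r_{j}}$; the last $w$-slot $w_{L_{j_{\nu_{j}}}}$ of the block is immediately followed by $t_{L_{j_{\nu_{j}}}+1}$, and the next active $t$-slot is the first one of $\pi_{j+1}^{\circ}$, namely $t_{k_{r_{j}+1}}$, so $L_{j_{\nu_{j}}}<k_{r_{j}+1}$ (the constraint being vacuous when $\pi_{\mu}^{\circ}=1$). Transcribing the regrouped word in the four cases according to whether $\pi_{1}^{\circ}$ and $\pi_{\mu}^{\circ}$ are trivial then gives precisely the displayed formulas for the generalized standard $OGS$ presentation of $\pi$.

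Finally, to obtain the formulas for $\pi^{\prime}=\Phi(\pi)$ I would apply Corollary \ref{w-reduce}: $\Phi$ annihilates each $w_{L}$, so $\pi^{\prime}=\pi_{1}^{\circ}\cdot \pi_{2}^{\circ}\cdots \pi_{\mu}^{\circ}$, with the trivial end-blocks simply dropped when $\pi_{1}^{\circ}=1$ or $\pi_{\mu}^{\circ}=1$. Across all the $t$-blocks the indices $k_{1}<k_{2}<\cdots<k_{r_{\mu}}$ are distinct and increasing, since each of $2,\dots,n$ labels at most one $t$-slot, so $\pi_{1}^{\circ}\cdots \pi_{\mu}^{\circ}$ is already written in the standard $OGS$ canonical form of $S_{n}$ from Theorem \ref{canonical-sn}, which by Theorem \ref{subgroup-t-w} coincides with its generalized standard $OGS$ presentation; this yields the stated expressions for $\pi^{\prime}$ in each case. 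The argument has no genuine obstacle: everything is a direct transcription of Theorem \ref{ogs-dn} combined with Theorem \ref{subgroup-t-w} and Corollary \ref{w-reduce}. The only thing requiring care is the bookkeeping of the possibly empty end-blocks $\pi_{1}^{\circ}$ and $\pi_{\mu}^{\circ}$ and the degenerate cases $\mu=1$ and $\pi=1$, which is where keeping the four cases and the index ranges straight takes most of the effort.
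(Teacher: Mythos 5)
Your proposal is correct and follows essentially the same route as the paper: the paper's own (very terse) proof simply takes the generalized standard $OGS$ presentation of Theorem \ref{ogs-dn}, omits the factors $t_k^{0}=w_k^{0}=1$, and groups the remaining factors into alternating blocks, with $\pi^{\prime}$ obtained via Corollary \ref{w-reduce}. Your write-up just makes explicit the bookkeeping (slot order, maximal runs, the inequalities $L_{j_1}\geq k_{r_j}$ and $L_{j_{\nu_j}}<k_{r_j+1}$, and the empty end-blocks) that the paper leaves implicit.
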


\begin{proof}
The generalized standard $OGS$ of  $\pi$ as follow, 
$$\pi=w_1^{j_1}\cdot t_2^{i_2}\cdot w_2^{j_2}\cdots t_n^{i_n},$$
If $i_k=0$ or $j_k=0$, we may omit  $t_k^{i_k}$ $w_k^{j_k}$ since $t_k^0=w_k^0=1$. Then by considering generators $t_k$ and $w_k$ in the generalized standard $OGS$  with non-zero powers only, we get the result of the proposition.
\end{proof}
In Theorem \ref{ogs-dn} we introduced the generalized standard $OGS$ presentation of the elements in the group $D_n$, by using two types of generators, $t_k$ for \\ $2\leq k\leq n$ and $w_L$ for $1\leq L\leq n-1$. In Theorem \ref{subgroup-t-w}, we show that the elements of $D_n$ which can be presented by using just one of the types of the generators ($t_k$ or $w_L$) in the generalized standard $OGS$ presentation, form a subgroup of $D_n$, where the subgroup which contains only products of elements of the form $w_L$ (for $1\leq L\leq n-1$) is denoted by $Id_n^{\bullet}$ and the subgroup which contains only products of elements of the form $t_k$ (for $2\leq k\leq n$) is denoted by $S_n^{\circ}$.
Now, we show an algorithm  to transform an element $\pi\in D_n$ presented by the generalized standard $OGS$ presentation, to a presentation of $\pi$ as a product of an element in $Id_n^{\bullet}$  and an element in $S_n^{\circ}$, where by Theorem \ref{wt} we give an explicit formula for it in case where $\pi^{\prime}$ is a standard $OGS$ elementary element (as it is defined in Definition \ref{elementary}). We start with the following lemma, which we use in the proof of Theorem \ref{wt}.

\begin{lemma}\label{tw-standard}
Let $\pi=\pi_1\cdot w_{q}$ such that the following holds:
\begin{itemize}
    \item $\pi_1\in S^{\circ}_n$;
    \item $\pi_{1}=t_{k_1}^{i_{k_1}}\cdot t_{k_2}^{i_{k_2}}\cdots t_{k_m}^{i_{k_m}}$ is a standard $OGS$ elementary element (as it is defined in Definition \ref{elementary}) by considering $\pi_1$ as an element of $S_n$;
    \item $q\geq k_m$. i.e., the presentation of 
    $$\pi=t_{k_1}^{i_{k_1}}\cdot t_{k_2}^{i_{k_2}}\cdots t_{k_m}^{i_{k_m}}\cdot w_{q}$$ is a generalized standard $OGS$ presentation of $\pi\in D_n$ as it is presented in Theorem \ref{ogs-dn}.
\end{itemize}
Then, $$\pi=\pi_1\cdot w_{q}=\begin{cases}
w_{q}\cdot \pi_1 & \text{if}\quad maj(\pi_1)=k_1 \\
w_{maj(\pi_1)}\cdot w_{q}\cdot \pi_1 & \text{if}\quad maj(\pi_1)<k_1.
\end{cases}$$
\end{lemma}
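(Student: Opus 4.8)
The plan is to push the lone factor $w_{q}$ leftwards through $\pi_{1}=t_{k_{1}}^{i_{k_{1}}}\cdots t_{k_{m}}^{i_{k_{m}}}$, one block $t_{k_{j}}^{i_{k_{j}}}$ at a time, using the exchange laws of Proposition~\ref{exchange-dn}, and to watch the auxiliary $w$'s created along the way collapse. The only preliminary fact I would record is the identity
$$t_{a}^{i}\cdot w_{b}=w_{i}\cdot w_{b}\cdot t_{a}^{i}\qquad\text{for }b\geq a$$
(with $w_{i},w_{b}$ well defined). For $b>a$ this follows from the law $w_{b}\cdot t_{a}^{i}=w_{i}\cdot t_{a}^{i}\cdot w_{b}$ of Proposition~\ref{exchange-dn} on left-multiplying by $w_{i}$ and using Lemma~\ref{order-w}; the case $b=a$ is \emph{not} among the laws of Proposition~\ref{exchange-dn}, and I would check it directly from the permutation descriptions of $t_{a}^{i}$ and $w_{a}$ recalled above (Remark~\ref{perm-w}), the point being that $t_{a}^{i}$ carries the two fixed points $1,a+1$ of $w_{a}$ onto $i+1,a+1$. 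Since $q\geq k_{j}$ for every $j$, this identity governs each encounter of $w_{q}$ with a block, and $w_{q}$ itself re-emerges unchanged.

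Next I would run a downward induction on the block index. Applying the identity to $t_{k_{m}}^{i_{k_{m}}}\cdot w_{q}$ turns $\pi$ into $t_{k_{1}}^{i_{k_{1}}}\cdots t_{k_{m-1}}^{i_{k_{m-1}}}\cdot w_{i_{k_{m}}}\cdot w_{q}\cdot t_{k_{m}}^{i_{k_{m}}}$. More generally, once $\pi$ has the shape $(\text{some }t\text{'s})\cdot w_{c}\cdot w_{q}\cdot(\text{some }t\text{'s})$ with $c=\sum_{l>j}i_{k_{l}}$ and the next block to the left being $t_{k_{j}}^{i_{k_{j}}}$ for some $j\leq m-1$, I move $w_{c}\cdot w_{q}$ past it: the $t_{q}^{i_{q}}\cdot w_{p}$ law of Proposition~\ref{exchange-dn} with $(q,p,i_{q})=(k_{j},c,i_{k_{j}})$ applied to $t_{k_{j}}^{i_{k_{j}}}\cdot w_{c}$, together with the identity above applied to $t_{k_{j}}^{i_{k_{j}}}\cdot w_{q}$, creates two copies of $w_{i_{k_{j}}}$; these commute past everything (Proposition~\ref{exchange-dn}) and annihilate each other (Lemma~\ref{order-w}), leaving $w_{c+i_{k_{j}}}\cdot w_{q}$ in front of the displaced block. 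The clean point, and the place where the hypotheses enter, is that for every $j$ with $2\leq j\leq m-1$ one falls into the branch $k_{j}>c+i_{k_{j}}$: since $c+i_{k_{j}}=\sum_{l\geq j}i_{k_{l}}\leq maj(\pi_{1})-(j-1)\leq k_{1}-(j-1)$ by elementarity (Definition~\ref{elementary}) and $i_{k_{l}}\geq1$, while $k_{j}\geq k_{1}+(j-1)$ because the $k_{l}$ strictly increase, we get $k_{j}-(c+i_{k_{j}})\geq 2(j-1)>0$. Hence, writing $s:=maj(\pi_{1})=\sum_{l}i_{k_{l}}$, after processing $t_{k_{m}}^{i_{k_{m}}},\dots,t_{k_{2}}^{i_{k_{2}}}$ the word becomes $t_{k_{1}}^{i_{k_{1}}}\cdot w_{s-i_{k_{1}}}\cdot w_{q}\cdot t_{k_{2}}^{i_{k_{2}}}\cdots t_{k_{m}}^{i_{k_{m}}}$ (and it is simply $t_{k_{1}}^{i_{k_{1}}}\cdot w_{q}$ when $m=1$).

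The last block $t_{k_{1}}^{i_{k_{1}}}$ produces the dichotomy: the relevant comparison is $k_{1}$ against $(s-i_{k_{1}})+i_{k_{1}}=s$, and elementarity forbids $s>k_{1}$. If $s<k_{1}$ — which in particular covers the whole case $m=1$, where $s=i_{k_{1}}\leq k_{1}-1$ — one is in the branch $k_{1}>s$, the surviving auxiliary factor becomes $w_{s}$, the two fresh $w_{i_{k_{1}}}$'s cancel as before, and $\pi=w_{s}\cdot w_{q}\cdot\pi_{1}=w_{maj(\pi_{1})}\cdot w_{q}\cdot\pi_{1}$. If $s=k_{1}$ one is in the branch $k_{1}=(k_{1}-i_{k_{1}})+i_{k_{1}}$, in which $t_{k_{1}}^{i_{k_{1}}}\cdot w_{k_{1}-i_{k_{1}}}=w_{i_{k_{1}}}\cdot t_{k_{1}}^{i_{k_{1}}}$: the auxiliary factor is destroyed outright, and cancelling $w_{i_{k_{1}}}$ against the copy created by $t_{k_{1}}^{i_{k_{1}}}\cdot w_{q}$ leaves $\pi=w_{q}\cdot\pi_{1}$. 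This is the asserted formula. The main obstacle is the bookkeeping in the inductive step — knowing precisely which $w$-index each exchange emits and confirming they pair off — and isolating the boundary identity $t_{a}^{i}\cdot w_{a}=w_{i}\cdot w_{a}\cdot t_{a}^{i}$, needed because $q$ is allowed to equal $k_{m}$.

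Finally, as an independent check I would verify the claim on signed permutations of $[\pm n]$: because $k_{m}\leq q$, the permutation $\pi_{1}$ fixes $q+1$, and since the subgroup $Id_{n}^{\bullet}$ of even sign changes is normal in $D_{n}$, the element $\pi_{1}\cdot w_{q}\cdot\pi_{1}^{-1}$ is the even sign change negating $\{\pi_{1}^{-1}(1),\,q+1\}$; tracing $1$ backwards through $t_{k_{m}}^{i_{k_{m}}},\dots,t_{k_{1}}^{i_{k_{1}}}$ shows $\pi_{1}^{-1}(1)=s+1$ if $s<k_{1}$ and $\pi_{1}^{-1}(1)=1$ if $s=k_{1}$, which matches the sign change $w_{maj(\pi_{1})}\cdot w_{q}$, resp.\ $w_{q}$, and hence reproduces $\pi=w_{maj(\pi_{1})}\cdot w_{q}\cdot\pi_{1}$, resp.\ $\pi=w_{q}\cdot\pi_{1}$.
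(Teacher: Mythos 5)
Your argument is correct, and it reaches the formula by a recognizably different organization than the paper, even though both rely on the same three ingredients (the exchange laws of Proposition~\ref{exchange-dn}, $w_k^2=1$ from Lemma~\ref{order-w}, and commutativity of the $w$'s). The paper inducts on $m$ by splitting off the \emph{leftmost} factor: it writes $\pi_1=t_{k_1}^{i_{k_1}}\cdot\pi_2$, invokes Theorem~28 of \cite{S1} to know that the proper suffix $\pi_2$ is elementary with $maj(\pi_2)<k_2$, applies the induction hypothesis to $\pi_2\cdot w_q$, and then resolves $t_{k_1}^{i_{k_1}}\cdot w_{maj(\pi_2)}$ by the case split $maj(\pi_1)=k_1$ versus $maj(\pi_1)<k_1$. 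You instead sweep $w_q$ leftwards through the blocks, maintaining the invariant that an accumulator $w_c$ with $c=\sum_{l>j}i_{k_l}$ sits in front of $w_q$, and you replace the appeal to \cite{S1} by a short counting argument ($\sum_{l\geq j}i_{k_l}\leq k_1-(j-1)$ versus $k_j\geq k_1+(j-1)$) showing that every intermediate block falls into the strict branch of the exchange law, so the dichotomy can only appear at the last block $t_{k_1}^{i_{k_1}}$ -- which is exactly where it should. Your version buys self-containedness and makes visible \emph{why} only one $w$-index survives; the paper's version is shorter because the suffix-elementarity fact does the bookkeeping for it. A genuine merit of your write-up is that you notice Proposition~\ref{exchange-dn} is stated only for strict inequality of indices, so the boundary encounter $q=k_m$ (allowed by the hypothesis $q\geq k_m$, and used silently in the paper's base case) needs the extra identity $t_a^{i}\cdot w_a=w_i\cdot w_a\cdot t_a^{i}$, which you verify directly on signed permutations; only a small wording slip there: $1$ and $a+1$ are the points \emph{negated} by $w_a$, not its fixed points, though your computation treats them correctly. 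The closing consistency check via conjugation of $w_q$ by $\pi_1$ and the value of $\pi_1^{-1}(1)$ is not needed for the proof but is sound.
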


\begin{proof}

The proof is by induction on $m$. We start with m=1:
By Proposition \ref{exchange-dn}:
$$w_{q} \cdot t_{k_1}^{i_{k_1}} = w_{i_{k_1}} \cdot t_{k_1}^{i_{k_1}} \cdot w_{q},$$
where, $k_1\leq q$. \\

By Lemma \ref{order-w},  $w_{i_{k_1}}^2=1$. Therefore, 
$$t_{k_1}^{i_{k_1}} \cdot w_{q}=w_{i_{k_1}}\cdot w_{q} \cdot t_{k_1}^{i_{k_1}}.$$

Now, assume by induction the lemma holds for $m=j$, \\ i.e.,
for $\pi_{1}=t_{k_1}^{i_{k_1}}\cdot t_{k_2}^{i_{k_2}}\cdots t_{k_j}^{i_{k_j}}$ a standard $OGS$ elementary element by considering $\pi_1$ as an element of $S_n$ and $q\geq k_j$, 
$$\pi_1\cdot w_{q}=\begin{cases}
w_{q}\cdot \pi_1 & \text{if}\quad maj(\pi_1)=k_1 \\
w_{maj(\pi_1)}\cdot w_{q}\cdot \pi_1 & \text{if}\quad maj(\pi_1)<k_1.
\end{cases}.$$

Now, consider $\pi_1=t_{k_1}^{i_{k_1}}\cdot t_{k_2}^{i_{k_2}}\cdots t_{k_j}^{i_{k_j}}\cdot t_{k_{j+1}}^{i_{k_{j+1}}}$ a standard $OGS$ elementary element by considering $\pi_1$ as an element of $S_n$.
Then, by \cite{S1}, Theorem 28, the subword $\pi_2=t_{k_2}^{i_{k_2}}\cdots t_{k_j}^{i_{k_j}}\cdot t_{k_{j+1}}^{i_{k_{j+1}}}$ of $\pi_1$ is a standard $OGS$ elementary element as well by considering $\pi_2$ as an element of $S_n$. Since, $\pi_2$ is a proper subword of $\pi_1$ (i.e., $\pi_2\neq \pi_1$), by \cite{S1}, Theorem 28, $maj(\pi_2)<k_2$.
Then, by the induction assumption:
$$\pi_1\cdot w_q=t_{k_1}^{i_{k_1}}\cdot t_{k_2}^{i_{k_2}}\cdots t_{k_{j+1}}^{i_{k_{j+1}}}\cdot w_q=t_{k_1}^{i_{k_1}}\cdot \pi_2\cdot w_q= t_{k_1}^{i_{k_1}}\cdot w_{maj(\pi_2)}\cdot w_q\cdot \pi_2$$ $$=(t_{k_1}^{i_{k_1}}\cdot w_q)\cdot w_{maj(\pi_2)}\cdot \pi_2=w_{i_{k_1}}\cdot w_q\cdot t_{k_1}^{i_{k_1}}\cdot w_{maj(\pi_2)}\cdot \pi_2 .$$

Notice,  $$maj(\pi_2)=\sum_{x=2}^{j+1}i_{k_x}< \sum_{x=1}^{j+1}i_{k_x}=maj(\pi_1).$$
Since, $\pi_1$ is a standard $OGS$ elementary element, by Definition \ref{elementary}, $maj(\pi_1)\leq k_1=i_{k_1}+maj(\pi_2)\leq k_1$.
Hence, $maj(\pi_2)<maj(\pi_1)\leq k_1$.
Then, by applying Proposition \ref{exchange-dn}, 
$$t_{k_1}^{i_{k_1}}\cdot w_{maj(\pi_2)}=\begin{cases}
w_{i_{k_1}}\cdot t_{k_1}^{i_{k_1}} & \text{if}\quad i_{k_1}+maj(\pi_2)=maj(\pi_1)=k_1 \\
w_{i_{k_1}}\cdot w_{maj(\pi_1)}\cdot t_{k_1}^{i_{k_1}} & \text{if}\quad i_{k_1}+maj(\pi_2)=maj(\pi_1)<k_1.
\end{cases}.$$
Hence we get
$$\pi_1\cdot w_q = w_{i_{k_1}}\cdot w_q\cdot (t_{k_1}^{i_{k_1}}\cdot w_{maj(\pi_2)})\cdot \pi_2= $$ $$=\begin{cases}
w_{i_{k_1}}\cdot w_q\cdot w_{i_{k_1}}\cdot t_{k_1}^{i_{k_1}}\cdot \pi_2 & \text{if}\quad maj(\pi_1)=k_1 \\
w_{i_{k_1}}\cdot w_q\cdot w_{i_{k_1}}\cdot w_{maj(\pi_1)}\cdot t_{k_1}^{i_{k_1}}\cdot \pi_2 & \text{if}\quad maj(\pi_1)<k_1.
\end{cases}.$$
Since by Lemma \ref{order-w}, $w_{i_{k_1}}^2=1$ and by the definition of $\pi_2$ in the lemma we have $t_{k_1}^{i_{k_1}}\cdot \pi_2=\pi_1$.
Hence,
$$w_{i_{k_1}}\cdot w_q\cdot w_{i_{k_1}}\cdot (t_{k_1}^{i_{k_1}}\cdot \pi_2)=w_q\cdot \pi_1\quad w_{i_{k_1}}\cdot w_q\cdot w_{i_{k_1}}\cdot w_{maj(\pi_1)}\cdot (t_{k_1}^{i_{k_1}}\cdot \pi_2)=w_{maj(\pi_1)}\cdot w_q\cdot \pi_1.$$ 
Hence, we get for $\pi_1=t_{k_1}^{i_{k_1}}\cdot t_{k_2}^{i_{k_2}}\cdots t_{k_j}^{i_{k_j}}\cdot t_{k_{j+1}}^{i_{k_{j+1}}}:$
$$\pi_1\cdot w_{q}=\begin{cases}
w_{q}\cdot \pi_1 & \text{if}\quad maj(\pi_1)=k_1 \\
w_{maj(\pi_1)}\cdot w_{q}\cdot \pi_1 & \text{if}\quad maj(\pi_1)<k_1.
\end{cases}.$$
Hence, the theorem holds for every $\pi=\pi_1\cdot w_q$ such that $\pi_1$ is a standard $OGS$ elementary element by considering $\pi_1$ as an element of $S_n$.

\end{proof}

\begin{definition}\label{maj-alpha}
Let $\pi\in D_n$. Consider the presentation of $\pi$ as it is presented in Corollary \ref{twtw}, with all the notations of the corollary. i.e.,  
  $$\pi= \pi_{1}^{\circ}\cdot \pi_{1}^{\bullet}\cdot \pi_{2}^{\circ}\cdot \pi_{2}^{\bullet}\cdots \pi_{\mu-1}^{\circ}\cdot \pi_{\mu-1}^{\bullet}\cdot \pi_{\mu}^{\circ}$$ $$= \pi_{1}^{\circ}\cdot w_{L_{1_1}} \cdots w_{L_{1_{\nu_{1}}}} \cdot t_{k_{r_{1}+1}}^{i_{k_{r_{1}+1}}} \cdots t_{k_{r_{2}}}^{i_{k_{r_{2}}}}\cdot w_{L_{2_1}} \cdots w_{L_{2_{\nu_{2}}}} \cdot t_{k_{r_{2}+1}}^{i_{k_{r_{2}}+1}} \cdots $$ $$\cdots  t_{k_{r_{\mu-1}}}^{i_{k_{r_{\mu-1}}}} \cdot  w_{L_{{\mu-1}_1}} \cdots w_{L_{{\mu-1}_{\nu_{\mu-1}}}} \cdot \pi_{\mu}^{\circ},$$

where,  
$$\text{either}\quad \pi_{1}^{\circ}=t_{k_{1}}^{i_{k_{1}}} \cdots t_{k_{r_{1}}}^{i_{k_{r_{1}}}}\quad \text{or}\quad \pi_{1}^{\circ}=1,$$

and, 
$$\text{either}\quad  \pi_{\mu}^{\circ}=t_{k_{r_{\mu-1}+1}}^{i_{k_{r_{\mu-1}+1}}} \cdots t_{k_{r_{\mu}}}^{i_{k_{r_{\mu}}}}\quad  \text{or}\quad \pi_{\mu}^{\circ}=1.$$
  
  Then for every $1\leq \alpha\leq \mu$, we define $maj_{\alpha}(\pi)$  and $\rho_{\alpha}(\pi)$ to be $$maj_{\alpha}(\pi)=maj(\prod_{j=1}^{\alpha}\pi_{j}^{\circ})=\sum_{j=1}^{r_{\alpha}} i_{k_{j}},$$
  $$\rho_{\alpha}(\pi)=maj(\pi^{\prime})-maj_{\alpha}(\pi)= maj(\prod_{j=\alpha+1}^{\mu}\pi_{j}^{\circ})=\sum_{j=r_{\alpha}+1}^{r_{\mu}} i_{k_{j}}.$$
 
  For every $L_{\alpha_j}$, where  $1\leq \alpha\leq \mu$,$1\leq j\leq \nu_{\alpha}$ we define $\varrho_{L_{\alpha_j}}(\pi)$ to be
  $$\varrho_{L_{\alpha_j}}(\pi)=\begin{cases}
  L_{\alpha_j}-\rho_{\alpha}(\pi) & \text{if}\quad L_{\alpha_j}\geq maj(\pi^{\prime}) \\
  L_{\alpha_j} & \text{if}\quad L_{\alpha_j} < maj(\pi^{\prime})
  \end{cases}$$
  
\end{definition}

\begin{theorem}\label{wt}
 Let $\pi\in D_n$ such that $\pi^{\prime}$ is a standard $OGS$ elementary element of $S_n$. Consider the presentation of $\pi$ as it presented in Corollary \ref{twtw}, with all the notations. i.e.,  
  
  $$\pi= \pi_{1}^{\circ}\cdot \pi_{1}^{\bullet}\cdot \pi_{2}^{\circ}\cdot \pi_{2}^{\bullet}\cdots \pi_{\mu-1}^{\circ}\cdot \pi_{\mu-1}^{\bullet}\cdot \pi_{\mu}^{\circ}$$ $$= \pi_{1}^{\circ}\cdot w_{L_{1_1}} \cdots w_{L_{1_{\nu_{1}}}} \cdot t_{k_{r_{1}+1}}^{i_{k_{r_{1}+1}}} \cdots t_{k_{r_{2}}}^{i_{k_{r_{2}}}}\cdot w_{L_{2_1}} \cdots w_{L_{2_{\nu_{2}}}} \cdot t_{k_{r_{2}+1}}^{i_{k_{r_{2}}+1}} \cdots $$ $$\cdots  t_{k_{r_{\mu-1}}}^{i_{k_{r_{\mu-1}}}} \cdot  w_{L_{{\mu-1}_1}} \cdots w_{L_{{\mu-1}_{\nu_{\mu-1}}}} \cdot \pi_{\mu}^{\circ},$$

where,  
$$\text{either}\quad \pi_{1}^{\circ}=t_{k_{1}}^{i_{k_{1}}} \cdots t_{k_{r_{1}}}^{i_{k_{r_{1}}}}\quad \text{or}\quad \pi_{1}^{\circ}=1,$$

and, 
$$\text{either}\quad  \pi_{\mu}^{\circ}=t_{k_{r_{\mu-1}+1}}^{i_{k_{r_{\mu-1}+1}}} \cdots t_{k_{r_{\mu}}}^{i_{k_{r_{\mu}}}}\quad  \text{or}\quad \pi_{\mu}^{\circ}=1.$$

  Then, $\pi$ can be presented uniquely in the form: 
  $$\pi=\pi^{\bullet}\cdot \pi^{\circ}\quad \text{such that}\quad \pi^{\bullet}\in Id^{\bullet}_n, \quad \pi^{\circ}\in S^{\circ}_n$$
  
  ($Id^{\bullet}_n$ and $S^{\circ}_n$ are defined in Theorem \ref{subgroup-t-w})\\
  
  where,
  
  $$\pi^{\bullet} = \prod_{\alpha=1}^{\mu-1} w_{maj_{\alpha ~|~ maj_{\alpha}(\pi)<k_1 }(\pi)}^{-0.5 \cdot (-1)^{\nu_{\alpha}}+0.5}\cdot w_{L_{1_1}} \cdots w_{L_{1_{\nu_1}}} \cdot w_{L_{2_1}} \cdots w_{L_{2_{\nu_2}}}  \cdots w_{L_{{\mu-1}_1}} \cdots w_{L_{{\mu-1}_{\nu_{\mu-1}}}},$$
  
  where, for every $\pi\in D_n$, $maj_{\alpha}(\pi)$  for $1\leq \alpha \leq \mu-1$ as it is defined in Definition \ref{maj-alpha}.
  
  $$\pi^{\circ} = \prod_{j=1}^{r_{\mu}} t_{k_j}^{i_{k_{j}}}.$$
  
Notice, considering $\pi^{\circ}$ as an element of $S_n$, the standard $OGS$ presentation (as it is presented in Theorem \ref{canonical-sn}) of $\pi^{\circ}$ is the same to the standard $OGS$ presentation of $\pi^{\prime}$ (i.e., $\pi^{\circ}$ and $\pi^{\prime}$ present the same element of $S_n$.).

\end{theorem}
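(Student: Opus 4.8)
The plan is to prove existence by pushing every generator $w_L$ occurring in the presentation of Corollary~\ref{twtw} to the far left, one $w$-block $\pi_\alpha^{\bullet}$ at a time, and to read off uniqueness from the homomorphism $\Phi$. Throughout, write $\Pi_\alpha:=\pi_1^{\circ}\cdots\pi_\alpha^{\circ}=\prod_{j=1}^{r_\alpha}t_{k_j}^{i_{k_j}}$ for the prefix consisting of the first $r_\alpha$ factors of the canonical form of $\pi^{\circ}=\pi'$ (with $\Pi_0:=1$). Because $\pi'$ is a standard $OGS$ elementary element, Theorem~\ref{theorem-elementary} guarantees that every such prefix $\Pi_\alpha$ is again a standard $OGS$ elementary element; its major index is $maj_\alpha(\pi)$ in the sense of Definition~\ref{maj-alpha}, and $maj_\alpha(\pi)\le maj(\pi')\le k_1$, where $k_1$ is the smallest index occurring in $\pi^{\circ}$. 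The point that makes the argument run is to keep the $t$-parts $\pi_1^{\circ},\dots,\pi_\alpha^{\circ}$ grouped together as the single elementary element $\Pi_\alpha$, so that Lemma~\ref{tw-standard} can be applied to it as a whole; this is why the $w$-blocks must be processed in left-to-right order.

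I would run an induction on $\alpha$ carrying the intermediate form
\[
\pi \;=\; W_\alpha\cdot \Pi_\alpha\cdot \pi_\alpha^{\bullet}\cdot \pi_{\alpha+1}^{\circ}\cdot \pi_{\alpha+1}^{\bullet}\cdots \pi_\mu^{\circ},\qquad 1\le\alpha\le\mu-1,
\]
where $W_\alpha$ is a product of generators of the form $w_L$; the induction starts at $\alpha=1$ with $W_1=1$ and $\Pi_1=\pi_1^{\circ}$ (which is $1$ when $\pi_1^{\circ}=1$), and the trivial case $\pi_\mu^{\circ}=1$ is absorbed by $\Pi_\mu:=\Pi_{\mu-1}$. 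To pass from $\alpha$ to $\alpha+1$ one must move $\pi_\alpha^{\bullet}=w_{L_{\alpha_1}}\cdots w_{L_{\alpha_{\nu_\alpha}}}$ to the left of $\Pi_\alpha$. By Corollary~\ref{twtw} each $L_{\alpha_l}\ge k_{r_\alpha}$, the largest index in $\Pi_\alpha$, which is precisely the hypothesis of Lemma~\ref{tw-standard}, so $\Pi_\alpha\cdot w_{L_{\alpha_l}}$ equals $w_{L_{\alpha_l}}\cdot\Pi_\alpha$ when $maj_\alpha(\pi)=k_1$ and $w_{maj_\alpha(\pi)}\cdot w_{L_{\alpha_l}}\cdot\Pi_\alpha$ when $maj_\alpha(\pi)<k_1$. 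Iterating over $l=1,\dots,\nu_\alpha$, then moving the generators $w_{L_{\alpha_l}}$ and $w_{maj_\alpha(\pi)}$ freely past one another (Proposition~\ref{exchange-dn}), one accumulates $\nu_\alpha$ copies of $w_{maj_\alpha(\pi)}$, which collapse to $w_{maj_\alpha(\pi)}^{\,-0.5(-1)^{\nu_\alpha}+0.5}$ by $w_{maj_\alpha(\pi)}^{2}=1$ (Lemma~\ref{order-w}); thus $\Pi_\alpha\cdot\pi_\alpha^{\bullet}=w_{maj_\alpha(\pi)}^{\,-0.5(-1)^{\nu_\alpha}+0.5}\cdot\pi_\alpha^{\bullet}\cdot\Pi_\alpha$ when $maj_\alpha(\pi)<k_1$, and $\Pi_\alpha\cdot\pi_\alpha^{\bullet}=\pi_\alpha^{\bullet}\cdot\Pi_\alpha$ otherwise. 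Setting $W_{\alpha+1}:=W_\alpha\cdot(\text{this correction factor})\cdot\pi_\alpha^{\bullet}$, and using that $\Pi_\alpha\cdot\pi_{\alpha+1}^{\circ}=\Pi_{\alpha+1}$ is already in canonical order and needs no rewriting, reproduces the form with $\alpha$ replaced by $\alpha+1$.

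After the $(\mu-1)$-st step one arrives at $\pi=W_\mu\cdot\Pi_\mu$ with $\Pi_\mu=\prod_{j=1}^{r_\mu}t_{k_j}^{i_{k_j}}=\pi^{\circ}$ (so $(\pi^{\circ})'=\pi'$) and $W_\mu$ equal to the concatenation, over $\alpha=1,\dots,\mu-1$, of the correction factor $w_{maj_\alpha(\pi)}^{\,-0.5(-1)^{\nu_\alpha}+0.5}$ (present only when $maj_\alpha(\pi)<k_1$, and with the convention $maj_\alpha(\pi)\ge 1$ in the degenerate case $\pi_1^{\circ}=1$) and of $\pi_\alpha^{\bullet}$; since all $w_L$ commute, this reorders into the stated expression for $\pi^{\bullet}$, and one notes that $maj_\alpha(\pi)<k_1\le L_{\beta_l}$, so the correction indices are distinct from the indices $L_{\beta_l}$ and no further collapse occurs. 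For uniqueness, apply $\Phi$ to an arbitrary factorization $\pi=\pi^{\bullet}\cdot\pi^{\circ}$ with $\pi^{\bullet}\in Id^{\bullet}_n$ and $\pi^{\circ}\in S^{\circ}_n$: by Theorem~\ref{subgroup-t-w}, $\Phi(\pi^{\bullet})=1$ and $\Phi$ restricts to an isomorphism of $S^{\circ}_n$ onto $S_n$, so $\pi'=\Phi(\pi^{\circ})$ pins $\pi^{\circ}$ down as the unique element of $S^{\circ}_n$ with the canonical form of $\pi'$, whence $\pi^{\bullet}=\pi\cdot(\pi^{\circ})^{-1}$ is forced too. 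I expect the main obstacle to be the bookkeeping in the inductive step: checking that each prefix $\Pi_\alpha$ really is a standard $OGS$ elementary element at the instant Lemma~\ref{tw-standard} is invoked, that the conditions $L_{\alpha_l}\ge k_{r_\alpha}$ of Corollary~\ref{twtw} match exactly the hypotheses of that lemma, and that the parity count of the $w_{maj_\alpha(\pi)}$ factors reproduces precisely the exponent $-0.5(-1)^{\nu_\alpha}+0.5$ together with the restriction $maj_\alpha(\pi)<k_1$.
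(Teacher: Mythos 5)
Your proposal is correct and follows essentially the same route as the paper's proof: an induction over the $w$-blocks in which Lemma \ref{tw-standard} is applied to the accumulated elementary prefix $\prod_{j\leq\alpha}\pi_{j}^{\circ}$ against each $w_{L_{\alpha_l}}$, after which commutativity of the $w$'s (Proposition \ref{exchange-dn}) and $w^{2}=1$ (Lemma \ref{order-w}) yield the parity exponent $-0.5\cdot(-1)^{\nu_{\alpha}}+0.5$. Your explicit uniqueness argument via $\Phi$ is a small, correct addition that the paper leaves implicit.
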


\begin{proof}

The proof is by induction on $\mu$.
If $\mu=1$, then $\pi=\pi_1^{\circ}$, and then the theorem holds trivially. Hence, we start with $\mu=2$.
If $\mu=2$ and $\pi_1^{\circ}=1$, then $\pi^{\bullet}=\pi_1^{\bullet}$ and $\pi^{\circ}=\pi_2^{\circ}$. Hence, the theorem holds trivially. Therefore, let assume $\mu=2$ and $\pi_1^{\circ}\neq 1$.
Then, $$\pi=\pi_1^{\circ}\cdot \pi_1^{\bullet}\cdot \pi_2^{\circ}=\begin{cases}
t_{k_{1}}^{i_{k_{1}}} \cdots t_{k_{r_{1}}}^{i_{k_{r_{1}}}}\cdot w_{L_{1_1}} \cdots w_{L_{1_{\nu_{1}}}} \cdot t_{k_{r_{1}+1}}^{i_{k_{r_{1}+1}}} \cdots t_{k_{r_{2}}}^{i_{k_{r_{2}}}} & \text{if}\quad \pi_2^{\circ}\neq 1 \\ 
t_{k_{1}}^{i_{k_{1}}} \cdots t_{k_{r_{1}}}^{i_{k_{r_{1}}}}\cdot w_{L_{1_1}} \cdots w_{L_{1_{\nu_{1}}}} & \text{if}\quad \pi_2^{\circ}= 1,\end{cases}$$
where,
\begin{itemize}
    \item $k_{r_1}\leq L_{1_1}$;
    \item $L_{1_u}<L_{1_{u+1}}$ for $1\leq u\leq \nu_1-1$;
    \item $L_{1_{\nu_{1}}}<k_{r_{1}+1}$.
\end{itemize}
Since by Corollary \ref{w-reduce},  the standard $OGS$ presentation (as it is  described in Theorem \ref{canonical-sn}) of $\pi^{\prime}$ is the same to the generalized standard $OGS$ presentation of   $\pi_1^{\circ}\cdot \pi_2^{\circ}$ (as it is described in Theorem \ref{ogs-dn}), we get that  by considering $\pi_1^{\circ}\cdot \pi_2^{\circ}$ as an element of $S_n$, it  presents the same element to $\pi^{\prime}$. Therefore,  $\pi_1^{\circ}\cdot \pi_2^{\circ}$ is a standard $OGS$ elementary element by considering it as an element of $S_n$, and then   by \cite{S1}, Theorem 28, $\pi_1^{\circ}$ and $\pi_2^{\circ}$ are standard $OGS$ elementary elements as well, by considering  $\pi_1^{\circ}$ and $\pi_2^{\circ}$ as elements of $S_n$.
Then, by applying Lemma \ref{tw-standard} on $\pi^{\circ}\cdot w_{L_{1_u}}$ for $1\leq u\leq \nu_1$, we have:
$$\pi_1^{\circ}\cdot w_{L_{1_u}}=\begin{cases}
w_{maj(\pi_1^{\circ})}\cdot  w_{L_{1_u}}\cdot  \pi_1^{\circ} & \text{if}\quad maj(\pi_1^{\circ})<k_1 \\
 w_{L_{1_u}}\cdot  \pi_1^{\circ} & \text{if}\quad maj(\pi_1^{\circ})=k_1
\end{cases}$$

Since by Definition \ref{maj-alpha},  $maj_1(\pi)=maj(\pi_1^{\circ})$, we get
$$\pi=\pi_1^{\circ}\cdot \pi_1^{\bullet}\cdot \pi_2^{\circ}=\begin{cases}
t_{k_{1}}^{i_{k_{1}}} \cdots t_{k_{r_{1}}}^{i_{k_{r_{1}}}}\cdot w_{L_{1_1}} \cdots w_{L_{1_{\nu_{1}}}} \cdot t_{k_{r_{1}+1}}^{i_{k_{r_{1}+1}}} \cdots t_{k_{r_{2}}}^{i_{k_{r_{2}}}} & \text{if}\quad \pi_2^{\circ}\neq 1 \\ 
t_{k_{1}}^{i_{k_{1}}} \cdots t_{k_{r_{1}}}^{i_{k_{r_{1}}}}\cdot w_{L_{1_1}} \cdots w_{L_{1_{\nu_{1}}}} & \text{if}\quad  \pi_2^{\circ}= 1,\end{cases}$$
$$=\begin{cases}
(w_{maj_1}(\pi))^{\nu_1}\cdot w_{L_{1_1}} \cdots w_{L_{1_{\nu_{1}}}}\cdot \pi_1^{\circ}\cdot \pi_2^{\circ} & \text{if}\quad \pi_2^{\circ}\neq 1 \\
(w_{maj_1}(\pi))^{\nu_1}\cdot w_{L_{1_1}} \cdots w_{L_{1_{\nu_{1}}}}\cdot \pi_1^{\circ} & \text{if}\quad \pi_2^{\circ}= 1\quad \text{and}\quad maj_1(\pi)<k_1 \\
w_{L_{1_1}} \cdots w_{L_{1_{\nu_{1}}}}\cdot \pi_1^{\circ} & \text{if}\quad \pi_2^{\circ}= 1\quad \text{and}\quad maj_1(\pi)=k_1.
\end{cases}$$
\\
Hence, the theorem holds for $\mu=2$.\\

Now, assume by induction the theorem holds for $\mu=\alpha$, and we prove it for $\mu=\alpha+1$.\\

Hence, we assume by induction:
$$\pi= \pi_{1}^{\circ}\cdot \pi_{1}^{\bullet}\cdot \pi_{2}^{\circ}\cdot \pi_{2}^{\bullet}\cdots \pi_{\alpha-1}^{\circ}\cdot \pi_{\alpha-1}^{\bullet}\cdot \pi_{\alpha}^{\circ}$$ $$= \pi_{1}^{\circ}\cdot w_{L_{1_1}} \cdots w_{L_{1_{\nu_{1}}}} \cdot t_{k_{r_{1}+1}}^{i_{k_{r_{1}+1}}} \cdots t_{k_{r_{2}}}^{i_{k_{r_{2}}}}\cdot w_{L_{2_1}} \cdots w_{L_{2_{\nu_{2}}}} \cdot t_{k_{r_{2}+1}}^{i_{k_{r_{2}}+1}} \cdots $$ $$\cdots  t_{k_{r_{\alpha-1}}}^{i_{k_{r_{\alpha-1}}}} \cdot  w_{L_{{\alpha-1}_1}} \cdots w_{L_{{\alpha-1}_{\nu_{\alpha-1}}}} \cdot \pi_{\alpha}^{\circ}$$
$$=\prod_{u=1}^{\alpha-1} \big(w_{maj_{u ~|~ maj_{u}(\pi)<k_1 }(\pi)}\big)^{\nu_{u}}\cdot w_{L_{1_1}} \cdots w_{L_{1_{\nu_1}}} \cdot w_{L_{2_1}} \cdots w_{L_{2_{\nu_2}}}  \cdots$$ $$\cdots w_{L_{{\alpha-1}_1}} \cdots w_{L_{{\alpha-1}_{\nu_{\alpha-1}}}}\cdot \pi_{1}^{\circ}\cdot  \pi_{2}^{\circ}\cdots \pi_{\alpha}^{\circ}.$$ 
where,
$$\text{either}\quad \pi_{1}^{\circ}=t_{k_{1}}^{i_{k_{1}}} \cdots t_{k_{r_{1}}}^{i_{k_{r_{1}}}}\quad \text{or}\quad \pi_{1}^{\circ}=1,$$

and
$$\text{either}\quad  \pi_{\alpha}^{\circ}=t_{k_{r_{\alpha-1}+1}}^{i_{k_{r_{\alpha-1}+1}}} \cdots t_{k_{r_{\alpha}}}^{i_{k_{r_{\alpha}}}}\quad  \text{or}\quad \pi_{\alpha}^{\circ}=1.$$
and 
\begin{itemize}
    \item $k_{r_u}\leq L_{u_1}$ for $1\leq u\leq \alpha-1$;
    \item $L_{u_v}<L_{u_{v+1}}$ for $1\leq u\leq \alpha-1$, and $1\leq v\leq \nu_u-1$;
    \item $L_{u_{\nu_{u}}}<k_{r_{u}+1}$ $1\leq u\leq \alpha-1$ in case $\pi_{\alpha}^{\circ}\neq 1$;
    \item $L_{u_{\nu_{u}}}<k_{r_{u}+1}$ $1\leq u\leq \alpha-2$ in case $\pi_{\alpha}^{\circ}=1$ (Since, there is no $k_{r_{\alpha-1}+1}$ in case $\pi_{\alpha}^{\circ}=1$).
\end{itemize}

Now, consider $\mu=\alpha+1$.
Then, 
$$\pi= \pi_{1}^{\circ}\cdot \pi_{1}^{\bullet}\cdot \pi_{2}^{\circ}\cdot \pi_{2}^{\bullet}\cdots \pi_{\alpha-1}^{\circ}\cdot \pi_{\alpha-1}^{\bullet}\cdot  \pi_{\alpha}^{\circ}\cdot \pi_{\alpha}^{\bullet}\cdot\pi_{\alpha+1}^{\circ}$$ $$= \pi_{1}^{\circ}\cdot w_{L_{1_1}} \cdots w_{L_{1_{\nu_{1}}}} \cdot t_{k_{r_{1}+1}}^{i_{k_{r_{1}+1}}} \cdots t_{k_{r_{2}}}^{i_{k_{r_{2}}}}\cdot w_{L_{2_1}} \cdots w_{L_{2_{\nu_{2}}}} \cdot t_{k_{r_{2}+1}}^{i_{k_{r_{2}}+1}} \cdots $$ $$\cdots  t_{k_{r_{\alpha}}}^{i_{k_{r_{\alpha}}}} \cdot  w_{L_{{\alpha}_1}} \cdots w_{L_{{\alpha}_{\nu_{\alpha}}}} \cdot \pi_{\alpha+1}^{\circ},$$

where,
$$\text{either}\quad \pi_{1}^{\circ}=t_{k_{1}}^{i_{k_{1}}} \cdots t_{k_{r_{1}}}^{i_{k_{r_{1}}}}\quad \text{or}\quad \pi_{1}^{\circ}=1,$$

and
$$\text{either}\quad  \pi_{\alpha+1}^{\circ}=t_{k_{r_{\alpha}+1}}^{i_{k_{r_{\alpha}+1}}} \cdots t_{k_{r_{\alpha+1}}}^{i_{k_{r_{\alpha+1}}}}\quad  \text{or}\quad \pi_{\alpha+1}^{\circ}=1.$$
and 
\begin{itemize}
    \item $k_{r_u}\leq L_{u_1}$ for $1\leq u\leq \alpha$;
    \item $L_{u_v}<L_{u_{v+1}}$ for $1\leq u\leq \alpha$, and $1\leq v\leq \nu_u-1$;
    \item $L_{u_{\nu_{u}}}<k_{r_{u}+1}$ $1\leq u\leq \alpha$ in case $\pi_{\alpha+1}^{\circ}\neq 1$;
    \item $L_{u_{\nu_{u}}}<k_{r_{u}+1}$ $1\leq u\leq \alpha-1$ in case $\pi_{\alpha+1}^{\circ}=1$ (Since, there is no $k_{r_{\alpha}+1}$ in case $\pi_{\alpha+1}^{\circ}=1$).
\end{itemize}

In case $\pi_{1}^{\circ}=1$, 
$$\pi=\pi_{1}^{\bullet}\cdot \pi_{2}^{\circ}\cdot \pi_{2}^{\bullet}\cdots \pi_{\alpha-1}^{\circ}\cdot \pi_{\alpha-1}^{\bullet}\cdot  \pi_{\alpha}^{\circ}\cdot \pi_{\alpha}^{\bullet}\cdot\pi_{\alpha+1}^{\circ}$$ $$=w_{L_{1_1}} \cdots w_{L_{1_{\nu_{1}}}} \cdot \pi_{2}^{\circ}\cdot \pi_{2}^{\bullet}\cdots \pi_{\alpha-1}^{\circ}\cdot \pi_{\alpha-1}^{\bullet}\cdot  \pi_{\alpha}^{\circ}\cdot \pi_{\alpha}^{\bullet}\cdot\pi_{\alpha+1}^{\circ}.$$
Notice,
$$\pi_{2}^{\circ}\cdot \pi_{2}^{\bullet}\cdots \pi_{\alpha-1}^{\circ}\cdot \pi_{\alpha-1}^{\bullet}\cdot  \pi_{\alpha}^{\circ}\cdot \pi_{\alpha}^{\bullet}\cdot\pi_{\alpha+1}^{\circ}=$$ $$=\prod_{u=2}^{\alpha} \big(w_{maj_{u ~|~ maj_{u}(\pi)<k_1 }(\pi)}\big)^{\nu_{u}}\cdot w_{L_{2_1}} \cdots w_{L_{2_{\nu_2}}} \cdot w_{L_{3_1}} \cdots w_{L_{3_{\nu_3}}}  \cdots$$ $$\cdots w_{L_{{\alpha}_1}} \cdots w_{L_{{\alpha}_{\nu_{\alpha}}}}\cdot \pi_{2}^{\circ}\cdots \pi_{\alpha+1}^{\circ}.$$ 
by the induction hypothesis.

Hence, in case $\pi_{1}^{\circ}=1$ we have: 
$$\pi=\pi_{1}^{\circ}\cdot \pi_{1}^{\bullet}\cdot \pi_{2}^{\circ}\cdot \pi_{2}^{\bullet}\cdots \pi_{\alpha-1}^{\circ}\cdot \pi_{\alpha-1}^{\bullet}\cdot  \pi_{\alpha}^{\circ}\cdot \pi_{\alpha}^{\bullet}\cdot\pi_{\alpha+1}^{\circ}=$$ $$=\prod_{u=1}^{\alpha} \big(w_{maj_{u ~|~ maj_{u}(\pi)<k_1 }(\pi)}\big)^{\nu_{u}}\cdot w_{L_{1_1}} \cdots w_{L_{1_{\nu_1}}} \cdot w_{L_{2_1}} \cdots w_{L_{2_{\nu_2}}}  \cdots$$ $$\cdots w_{L_{{\alpha}_1}} \cdots w_{L_{{\alpha}_{\nu_{\alpha}}}}\cdot \pi_{1}^{\circ}\cdot \pi_{2}^{\circ}\cdots \pi_{\alpha+1}^{\circ}.$$  

Hence, the theorem holds for $\mu=\alpha+1$ in case $\pi_{1}^{\circ}=1$.\\
Now, we consider 
$$\pi = \pi_{1}^{\circ}\cdot \pi_{1}^{\bullet}\cdot \pi_{2}^{\circ}\cdot \pi_{2}^{\bullet}\cdots \pi_{\alpha-1}^{\circ}\cdot \pi_{\alpha-1}^{\bullet}\cdot  \pi_{\alpha}^{\circ}\cdot \pi_{\alpha}^{\bullet}\cdot\pi_{\alpha+1}^{\circ}$$
where $$\pi_1^{\circ}\neq 1.$$
Then, 
$$\pi=t_{k_{1}}^{i_{k_{1}}} \cdots t_{k_{r_{1}}}^{i_{k_{r_{1}}}}\cdot w_{L_{1_1}} \cdots w_{L_{1_{\nu_{1}}}} \cdot t_{k_{r_{1}+1}}^{i_{k_{r_{1}+1}}} \cdots t_{k_{r_{2}}}^{i_{k_{r_{2}}}}\cdot w_{L_{2_1}} \cdots w_{L_{2_{\nu_{2}}}} \cdot t_{k_{r_{2}+1}}^{i_{k_{r_{2}}+1}} \cdots $$ $$\cdots  t_{k_{r_{\alpha}}}^{i_{k_{r_{\alpha}}}} \cdot  w_{L_{{\alpha}_1}} \cdots w_{L_{{\alpha}_{\nu_{\alpha}}}} \cdot \pi_{\alpha+1}^{\circ}$$

By the induction hypothesis:
$$\pi= \prod_{u=1}^{\alpha-1} \big(w_{maj_{u ~|~ maj_{u}(\pi)<k_1 }(\pi)}\big)^{\nu_{u}}\cdot w_{L_{1_1}} \cdots w_{L_{1_{\nu_1}}} \cdot w_{L_{2_1}} \cdots w_{L_{2_{\nu_2}}}  \cdots$$ $$\cdots w_{L_{{\alpha-1}_1}} \cdots w_{L_{{\alpha-1}_{\nu_{\alpha-1}}}}\cdot \pi_{1}^{\circ}\cdot  \pi_{2}^{\circ}\cdots \pi_{\alpha}^{\circ}\cdot w_{L_{{\alpha}_1}} \cdots w_{L_{{\alpha}_{\nu_{\alpha}}}} \cdot \pi_{\alpha+1}^{\circ}.$$

Now, consider   $$(\prod_{j=1}^{\alpha}\pi_j^{\circ})\cdot w_{L_{\alpha_k}}$$ for every  $1\leq k\leq \nu_{\alpha}$. Then by Lemma \ref{tw-standard} we conclude:

$$(\prod_{j=1}^{\alpha}\pi_j^{\circ})\cdot w_{L_{\alpha_k}}=\begin{cases}
w_{maj_{\alpha}(\pi)}\cdot  w_{L_{\alpha_k}}\cdot \prod_{j=1}^{\alpha} \pi_j^{\circ} & \text{if}\quad maj_{\alpha}(\pi)<k_1 \\
 w_{L_{\alpha_k}}\cdot \prod_{j=1}^{\alpha} \pi_j^{\circ} & \text{if}\quad maj_{\alpha}(\pi)=k_1
\end{cases}$$

Hence, we get:   

$$\pi = \prod_{u=1}^{\alpha}w_{maj_{u ~|~ maj_{u}(\pi)<k_1}(\pi)}^{\nu_{u}}\cdot w_{L_{1_1}} \cdots w_{L_{1_{\nu_1}}} \cdot w_{L_{2_1}} \cdots w_{L_{2_{\nu_2}}}  \cdots w_{L_{{\mu-1}_1}} \cdots w_{L_{{\mu-1}_{\nu_{\mu-1}}}}  \cdot$$ $$\cdot \pi_{1}^{\circ}\cdot  \pi_{2}^{\circ}\cdots \pi_{\alpha}^{\circ}\cdot \pi_{\alpha+1}^{\circ}.$$

Now, since by Lemma \ref{order-w},  $w_{maj_{\alpha}(\pi)}^2=1$ for all $1\leq \alpha\leq \mu-1$, 
and since the following holds:
$$
-0.5 \cdot (-1)^{\nu_{\alpha}}+0.5=\left\{\begin{array}{ll}
0 & \quad \text{if}\quad  \nu_{\alpha} \mod 2 =0
\\ \\
1 & \quad \text{if}\quad  \nu_{\alpha} \mod 2 =1

\end{array}\right.
$$
we get the result of the theorem.

\end{proof}

Now, we give the combinatorial meaning of Theorem \ref{wt} by considering $\pi\in D_n$ as a sign-permutation.

\begin{corollary}\label{wt-perm}
Let $\pi\in D_n$ such that $\pi^{\prime}$ is a standard $OGS$ elementary element of $S_n$. Consider the presentation of $\pi$ as it presented in Corollary \ref{twtw}, with all the notations. i.e.,  
  
  $$\pi= \pi_{1}^{\circ}\cdot \pi_{1}^{\bullet}\cdot \pi_{2}^{\circ}\cdot \pi_{2}^{\bullet}\cdots \pi_{\mu-1}^{\circ}\cdot \pi_{\mu-1}^{\bullet}\cdot \pi_{\mu}^{\circ}$$ $$= \pi_{1}^{\circ}\cdot w_{L_{1_1}} \cdots w_{L_{1_{\nu_{1}}}} \cdot t_{k_{r_{1}+1}}^{i_{k_{r_{1}+1}}} \cdots t_{k_{r_{2}}}^{i_{k_{r_{2}}}}\cdot w_{L_{2_1}} \cdots w_{L_{2_{\nu_{2}}}} \cdot t_{k_{r_{2}+1}}^{i_{k_{r_{2}}+1}} \cdots $$ $$\cdots  t_{k_{r_{\mu-1}}}^{i_{k_{r_{\mu-1}}}} \cdot  w_{L_{{\mu-1}_1}} \cdots w_{L_{{\mu-1}_{\nu_{\mu-1}}}} \cdot \pi_{\mu}^{\circ},$$

where,  
$$\text{either}\quad \pi_{1}^{\circ}=t_{k_{1}}^{i_{k_{1}}} \cdots t_{k_{r_{1}}}^{i_{k_{r_{1}}}}\quad \text{or}\quad \pi_{1}^{\circ}=1,$$

and, 
$$\text{either}\quad  \pi_{\mu}^{\circ}=t_{k_{r_{\mu-1}+1}}^{i_{k_{r_{\mu-1}+1}}} \cdots t_{k_{r_{\mu}}}^{i_{k_{r_{\mu}}}}\quad  \text{or}\quad \pi_{\mu}^{\circ}=1.$$
For $1\leq \alpha\leq \mu-1$, let $maj_{\alpha}(\pi)$ be as it is defined in Definition $maj_{\alpha}(\pi)$.
The subset $N(\pi)$ of the $\{1, 2,\ldots, n\}$ is defined as follow:
\begin{itemize}
    \item For $1\leq \alpha\leq \mu-1$,  $maj_{\alpha}(\pi)+1\in N(\pi)$ if and only if  $\nu_{\alpha}$ (as it is defined in Corollary \ref{twtw}) is odd and $maj_{\alpha}(\pi)<k_1$.
    \item For $1\leq \alpha\leq \mu-1$ and $1\leq j\leq \nu_{\alpha}$, ~~$L_{\alpha_j}+1\in N(\pi)$.
    \item  If $2\leq x\leq n$ and $x\neq maj_{\alpha}(\pi)+1$ and $x\neq L_{\alpha_j}+1$ for any $1\leq \alpha\leq \mu-1$ and any $1\leq j\leq \nu_{\alpha}$, then $x\notin N(\pi)$.
    \item $1\in N(\pi)$ if and only if the number of elements $x$ such that $2\leq x\leq n$ and $x\in N(\pi)$ is odd.
    \end{itemize}
Then, $\pi$ is the following sign-permutation of $D_n$:
$$\begin{cases} 
\pi(x)=\pi^{\prime}(x) & \text{if}\quad x\notin N(\pi) \\
\pi(x)=-\pi^{\prime}(x) & \text{if}\quad x\in N(\pi).
\end{cases}$$
\end{corollary}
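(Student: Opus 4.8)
The plan is to recover $N(\pi)$ as the set of coordinates negated by the ``sign part'' $\pi^{\bullet}$ produced by Theorem \ref{wt}, and then to read that set off from the explicit formula for $\pi^{\bullet}$. First I would invoke Theorem \ref{wt} to write $\pi=\pi^{\bullet}\cdot\pi^{\circ}$ with $\pi^{\bullet}\in Id^{\bullet}_n$ and with $\pi^{\circ}$ representing, as an element of $S_n$, the same permutation as $\pi^{\prime}$. Since $\pi^{\bullet}\in Id^{\bullet}_n$, Theorem \ref{subgroup-t-w} gives $(\pi^{\bullet})^{\prime}=1$, so $|\pi^{\bullet}(x)|=x$ for every $x$; set $M:=\{x : \pi^{\bullet}(x)=-x\}$. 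Using the left-to-right composition of Definition \ref{sn} together with $\pi^{\circ}(-x)=-\pi^{\circ}(x)$ (valid for any element of $D_n$), one gets $\pi(x)=(\pi^{\bullet}\cdot\pi^{\circ})(x)=\pi^{\circ}(\pi^{\bullet}(x))$, which is $\pi^{\prime}(x)$ if $x\notin M$ and $-\pi^{\prime}(x)$ if $x\in M$. Thus the corollary reduces to the claim $M=N(\pi)$.

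Next I would compute the negation-support of a product of generators $w_L$. By Remark \ref{perm-w} each $w_L$ ($1\le L\le n-1$) fixes the absolute value of every coordinate and has negation-support $\{1,L+1\}$, and by Proposition \ref{exchange-dn} the $w_L$'s commute; hence for $W=w_{L^{(1)}}\cdots w_{L^{(s)}}$ and $2\le x\le n$ we have $W(x)=-x$ exactly when an odd number of the $L^{(r)}$ equal $x-1$, while $W(1)$ is then forced by the $D_n$-parity constraint that the number of negated coordinates be even. Applying this to $W=\pi^{\bullet}$ with the factorization of Theorem \ref{wt}, and using that $-0.5\cdot(-1)^{\nu_{\alpha}}+0.5$ equals $1$ iff $\nu_{\alpha}$ is odd, the $w$-indices occurring in $\pi^{\bullet}$ are: (i) $maj_{\alpha}(\pi)$, once, for each $1\le\alpha\le\mu-1$ with $\nu_{\alpha}$ odd and $maj_{\alpha}(\pi)<k_1$; and (ii) $L_{\alpha_j}$, once, for each $1\le\alpha\le\mu-1$, $1\le j\le\nu_{\alpha}$. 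I would then argue these indices are pairwise distinct, so each occurs with multiplicity $0$ or $1$ in $\pi^{\bullet}$: within a block the $L_{\alpha_j}$ strictly increase, across blocks $L_{\alpha_{\nu_{\alpha}}}<k_{r_{\alpha}+1}\le k_{r_{\beta}}\le L_{\beta_1}$ by the constraints of Corollary \ref{twtw}; the values $maj_{\alpha}(\pi)=\sum_{j\le r_{\alpha}}i_{k_j}$ strictly increase over the $\alpha$ with $\pi_{\alpha}^{\circ}\neq1$; and a present correction index is $<k_1$ while every $L_{\beta_j}\ge k_{r_{\beta}}\ge k_1$. Consequently $M\cap\{2,\ldots,n\}$ consists precisely of $maj_{\alpha}(\pi)+1$ for the $\alpha$ in case (i) and of $L_{\alpha_j}+1$ for the pairs in case (ii), i.e. exactly the first three clauses defining $N(\pi)$; and since $\pi^{\bullet}\in D_n$, coordinate $1$ lies in $M$ iff $|M\cap\{2,\ldots,n\}|$ is odd, which is the last clause. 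Together with the first paragraph this gives $M=N(\pi)$ and hence the stated sign-permutation formula for $\pi$.

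The hard part is the distinctness used above, precisely in the degenerate case $\pi_1^{\circ}=1$. There the leading $w$-block has indices $L_{1_j}$ that need not exceed $k_1$, so one of them may coincide with a later correction index $maj_{\alpha}(\pi)<k_1$; the two resulting copies of that $w$ then cancel (as $w^2=1$ by Lemma \ref{order-w}), so the clauses defining $N(\pi)$ must be read with ``odd multiplicity'' semantics — an integer lies in $N(\pi)$ iff it is produced by an odd number of the clauses — which is exactly what the parity count of paragraph two delivers; for a literal reading one would restrict to $\pi_1^{\circ}\neq1$, where all $L_{\beta_j}\ge k_1$ while all present corrections are $<k_1$, so no coincidence occurs. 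The rest is bookkeeping: the $w_0$/$maj_1(\pi)=0$ degeneracy when $\pi_1^{\circ}=1$ (the $\alpha=1$ correction is simply absent), and tracking the $\pi_{\mu}^{\circ}=1$ boundary case; everything else is a direct translation of Theorem \ref{wt} through Remark \ref{perm-w}.
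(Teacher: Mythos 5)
Your proposal is correct and follows essentially the same route as the paper's own proof: decompose $\pi=\pi^{\bullet}\cdot\pi^{\circ}$ via Theorem \ref{wt}, read the negation support of $\pi^{\bullet}$ from Remark \ref{perm-w} together with the commutativity of the $w_L$'s, and fix the sign of coordinate $1$ by the $D_n$ parity constraint. Your extra distinctness check and the cancellation you flag in the degenerate case $\pi_1^{\circ}=1$ (where a correction index $maj_{\alpha}(\pi)<k_1$ can coincide with some $L_{1_j}$, e.g. $\pi=w_2\cdot t_5^{2}\cdot w_6\cdot t_7^{2}$, forcing the ``odd multiplicity'' reading of $N(\pi)$) are genuine refinements of a point the paper's proof passes over silently.
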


\begin{proof}
Let $$\pi=\pi_{1}^{\circ}\cdot w_{L_{1_1}} \cdots w_{L_{1_{\nu_{1}}}} \cdot t_{k_{r_{1}+1}}^{i_{k_{r_{1}+1}}} \cdots t_{k_{r_{2}}}^{i_{k_{r_{2}}}}\cdot w_{L_{2_1}} \cdots w_{L_{2_{\nu_{2}}}} \cdot t_{k_{r_{2}+1}}^{i_{k_{r_{2}}+1}} \cdots $$ $$\cdots  t_{k_{r_{\mu-1}}}^{i_{k_{r_{\mu-1}}}} \cdot  w_{L_{{\mu-1}_1}} \cdots w_{L_{{\mu-1}_{\nu_{\mu-1}}}} \cdot \pi_{\mu}^{\circ},$$

where,  
$$\text{either}\quad \pi_{1}^{\circ}=t_{k_{1}}^{i_{k_{1}}} \cdots t_{k_{r_{1}}}^{i_{k_{r_{1}}}}\quad \text{or}\quad \pi_{1}^{\circ}=1,$$

and, 
$$\text{either}\quad  \pi_{\mu}^{\circ}=t_{k_{r_{\mu-1}+1}}^{i_{k_{r_{\mu-1}+1}}} \cdots t_{k_{r_{\mu}}}^{i_{k_{r_{\mu}}}}\quad  \text{or}\quad \pi_{\mu}^{\circ}=1.$$
Assume, $\pi^{\prime}$ is a standard $OGS$ elementary element as it is defined in Definition \ref{elementary}. Then, by Theorem \ref{wt}, 
$$\pi=\pi^{\bullet}\cdot \pi^{\circ},$$
such that $$\pi^{\bullet}= \prod_{u=1}^{\alpha}w_{maj_{u ~|~ maj_{u}(\pi)<k_1}(\pi)}^{\nu_{u}}\cdot w_{L_{1_1}} \cdots w_{L_{1_{\nu_1}}} \cdot w_{L_{2_1}} \cdots w_{L_{2_{\nu_2}}}  \cdots w_{L_{{\mu-1}_1}} \cdots w_{L_{{\mu-1}_{\nu_{\mu-1}}}}$$
and $$\pi^{\circ}=\pi_1^{\circ}\cdot \pi_{2}^{\circ}\cdots \pi_{\mu}^{\circ}$$
where, by Theorem \ref{wt}, the presentation of $\pi^{\circ}$ in terms of generalized standard $OGS$ (as it is presented in Theorem \ref{ogs-dn}) is same to the standard $OGS$ presentation of $\pi^{\prime}$ (as it is presented in Theorem \ref{canonical-sn}). Hence, for $1\leq x\leq n$ (i.e., $x>0$) $\pi^{\circ}(x)=\pi^{\prime}(x)$.
Since, by Theorem \ref{wt}, $\pi^{\bullet}\in Id_n^{\bullet}$, and by Theorem \ref{subgroup-t-w}, $\big(Id_n^{\bullet}\big)^{\prime}=1$, we get by Definition \ref{hom-dn-sn}, $|\pi^{\bullet}(x)|=x$ for $1\leq x\leq n$ (i.e., either $\pi^{\bullet}(x)=x$ or 
$\pi^{\bullet}(x)=-x$ for $1\leq x\leq n$.
By Remark \ref{perm-w},  for every $1\leq x\leq n-1$, $w_x$ satisfies the following properties $$\begin{array}{l}
w_{x}(1)=-1 \\
w_{x}(x+1)=-(x+1) \\
w_{x}(j)=j, \text { for } 2\leqslant j \leqslant x \quad \text{and} \quad x+2\leqslant j \leqslant n.
\end{array}.$$
Hence,
for every $1\leq \alpha\leq \mu-1$, $w_{maj_{\alpha}(\pi)}$ satisfies the following properties $$\begin{array}{l}
w_{maj_{\alpha}(\pi)}(1)=-1 \\
w_{maj_{\alpha}(\pi)}(maj_{\alpha}(\pi)+1)=-(maj_{\alpha}(\pi)+1) \\
w_{maj_{\alpha}(\pi)}(j)=j, \text { for } 2\leqslant j \leqslant maj_{\alpha}(\pi)\quad \text{and} \quad maj_{\alpha}+2\leqslant j \leqslant n.
\end{array}.$$
and for every $1\leq \alpha\leq \mu-1$,  and $1\leq q\leq \nu_{\alpha}$, $w_{L_{{\alpha}_q}}(\pi)$ satisfies the following  $$\begin{array}{l}
w_{L_{{\alpha}_q}}(1)=-1 \\
w_{L_{{\alpha}_q}}(L_{{\alpha}_q}+1)=-(L_{{\alpha}_q}+1) \\
w_{L_{{\alpha}_q}}(j)=j, \text { for } 2\leqslant j \leqslant L_{{\alpha}_q}\quad \text{and} \quad L_{{\alpha}_q}+2\leqslant j \leqslant n.
\end{array}.$$
Hence, for $2\leq x\leq n$,
\begin{itemize}
    \item $$\pi^{\bullet}(x)=$$ $$\prod_{u=1}^{\alpha}w_{maj_{u ~|~ maj_{u}(\pi)<k_1}(\pi)}^{\nu_{u}}\cdot w_{L_{1_1}} \cdots w_{L_{1_{\nu_1}}} \cdot w_{L_{2_1}} \cdots w_{L_{2_{\nu_2}}}  \cdots w_{L_{{\mu-1}_1}} \cdots w_{L_{{\mu-1}_{\nu_{\mu-1}}}}(x)=-x$$
    if one of the following holds:
    \begin{itemize}
        \item For $1\leq \alpha\leq \mu-1$,  $x=maj_{\alpha}(\pi)+1$ and  $\nu_{\alpha}$ is odd and $maj_{\alpha}(\pi)<k_1$.
        \item For $1\leq \alpha\leq \mu-1$ and $1\leq j\leq \nu_{\alpha}$, ~~$x=L_{\alpha_j}+1$.
    \end{itemize}
\item $$\pi^{\bullet}(x)=$$ $$\prod_{u=1}^{\alpha}w_{maj_{u ~|~ maj_{u}(\pi)<k_1}(\pi)}^{\nu_{u}}\cdot w_{L_{1_1}} \cdots w_{L_{1_{\nu_1}}} \cdot w_{L_{2_1}} \cdots w_{L_{2_{\nu_2}}}  \cdots w_{L_{{\mu-1}_1}} \cdots w_{L_{{\mu-1}_{\nu_{\mu-1}}}}(x)=x$$ for all the rest of values of $2\leq x\leq n$.
\end{itemize}

Notice, $\pi^{\bullet}(1)=-1$ if and only if the number of elements $x$ such that \\ $2\leq x\leq n$ and $\pi^{\bullet}(x)=-x$ is odd.\\

Notice, by the definition of $N(\pi)$,   $$\pi^{\bullet}(x)=-x\quad \text{if and only if}\quad x\in N(\pi).$$\\

Now, since for every $1\leq x\leq n$, $$\pi^{\circ}(x)=\pi^{\prime}(x), \quad \pi^{\circ}(-x)=-\pi^{\prime}(x),$$
we get 
$$\pi(x)=\pi^{\bullet}\cdot \pi^{\circ}(x)=\pi^{\prime}(x)\quad \text{if and only if}\quad \pi^{\bullet}(x)=x$$
and
$$\pi(x)=\pi^{\bullet}\cdot \pi^{\circ}(x)=-\pi^{\prime}(x)\quad \text{if and only if}\quad \pi^{\bullet}(x)=-x$$

\end{proof}

\begin{remark}\label{wt-general}
In case of $\pi\in D_n$ where $\pi^{\prime}$ is not a standard $OGS$ elementary element, there is possible to decompose $\pi$ in the form $$\pi=\pi^{\bullet}\cdot \pi^{\circ}$$ such that 
$$\pi^{\bullet}\in Id_n^{\bullet}\quad \quad \pi^{\circ}\in S_n^{\circ},$$
where $Id_n^{\bullet}$ and $S_n^{\circ}$ are subgroups of $D_n$ as are defined in Theorem \ref{subgroup-t-w}.
Since the formula for the description $\pi^{\bullet}$ is very complicated in case of general $\pi\in D_n$, we just describe briefly the algorithm for calculating $\pi^{\bullet}$. The standard $OGS$ presentation of $\pi^{\circ}$ is the same to the standard $OGS$ presentation of $\pi^{\prime}$ (i.e., we get the standard $OGS$ presentation of $\pi^{(\circ)}$ by just omitting the elements of the form $w_L$ from the generalized standard $OGS$ presentation of $\pi$).
Let $\pi\in D_n$ such that $\pi^{\prime}$ is not a standard $OGS$ elementary element of $S_n$. First, consider the standard $OGS$ elementary factorization of $\pi^{\prime}$ as it is defined in Definition \ref{canonical-factorization-def}: 
$$\pi^{\prime}=\prod_{v=1}^{z(\pi^{\prime})}\big({\pi^{\prime}}^{(v)}\big), ~~~~ where ~~~~\big({\pi^{\prime}}^{(v)}\big)=\prod_{j=1}^{m^{(v)}}t_{h^{(v)}_{j}}^{\imath_{j}^{(v)}},$$
 by the presentation in the standard $OGS$ canonical form  for every $1\leq v\leq z(\pi)$ and  $1\leq j\leq m^{(v)}$ such that:
 \begin{itemize}
\item $\imath_{j}^{(v)}>0;$ \\
\item $\sum_{j=1}^{m^{(1)}}\imath_{j}^{(1)}\leq h^{(1)}_{1}$ i.e., $maj\left({\pi^{\prime}}^{(1)}\right)\leq h^{(1)}_{1}$; \\
\item $h^{(v-1)}_{m^{(v-1)}}\leq\sum_{j=1}^{m^{(v)}}\imath_{j}^{(v)}\leq h^{(v)}_{1}$ for $2\leq v\leq z$ \\ \\
i.e., $h^{(v-1)}_{m^{(v-1)}}\leq maj\left[\left({\pi^{\prime}}^{(v)}\right)\right]\leq h^{(v)}_{1} ~~ for ~~ 2\leq v\leq z$.
\end{itemize}
Then, for every $1\leq v\leq z(\pi)$, let $\dot{\pi}^{(v)}$ be the subword of $\pi$ such the following holds:
\begin{itemize}
    \item For every $1\leq v\leq z(\pi)$, ~${[{\dot{\pi}}^{(v)}]}^{\prime}={\pi^{\prime}}^{(v)}$.
    \item  For $1\leq j\leq k_m$, $$w_j\in \dot{\pi}^{(v)}$$ if and only if 
$$\begin{cases}
maj\left[\left({\pi^{\prime}}^{(v)}\right)\right]\leq j<maj\left[\left({\pi^{\prime}}^{(v+1)}\right)\right] & \text{if}\quad 1\leq v\leq z(\pi)-1 \\ \\
maj\left[\left({\pi^{\prime}}^{(v)}\right)\right]\leq j & \text{if}\quad v=z(\pi).
\end{cases}$$
\end{itemize}

Then applying Theorem \ref{wt} on $\dot{\pi}^{(v)}$ for every $1\leq v\leq z(\pi)$.Then we get
$$\dot{\pi}^{(v)}=\left(\dot{\pi}^{(v)}\right)^{\bullet}\cdot\left(\dot{\pi}^{(v)}\right)^{\circ}.$$
Then by applying Proposition \ref{exchange-dn} and Lemma \ref{tw-standard} several times we move the elements of the form $w_j$ to the left hand side, and then we get the presentation $$\pi=\pi^{\bullet}\cdot \pi^{\circ}$$ such that $\pi^{\bullet}\in Id_n^{\bullet}$ and $\pi^{\circ}\in S_n^{\circ}$.
We  explain the process to get the decomposition  $\pi=\pi^{\bullet}\cdot \pi^{\circ}$  step by step in  Example \ref{general-length-example} .
\end{remark}

\section{The two parabolic subgroups of $D_{n}$ which are isomorphic to $S_n$}\label{two-para-sub} 

In Theorem \ref{subgroup-t-w}, we have already defined two subgroups of $D_n$, which are naturally characterized by the generalized standard $OGS$ presentation of $D_n$. The subgroup $S^{\circ}_n$ of $D_n$, which is isomorphic to $S_n$, and contains the elements of $D_n$ such that the generalized standard $OGS$ presentation of $\pi$ (as it is described in Theorem \ref{ogs-dn}) is the same to the standard $OGS$ presentation of $\pi^{\prime}$ (as it is described in Theorem \ref{canonical-sn}), and the subgroup $Id^{\bullet}_n$ of $D_n$, which is isomorphic to $\mathbb{Z}_2^{n-1}$, and contains all the elements of $D_n$ such that $\pi^{\prime}=1$. In this section we show the presentation of the elements in another interesting parabolic subgroup of $D_{n}$ which is isomorphic to $S_{n}$  by the generalized standard $OGS$ for $D_n$ . We start with the characterization of the two parabolic subgroups of $D_n$ which are isomorphic to $S_n$ (Notice, the first one has been defined in Theorem \ref{subgroup-t-w}).

\begin{itemize}
    \item The subgroup $S^{\circ}_{n}$ which is generated by $\{ s_{1}, s_{2}, \ldots, s_{n-1}\}.$
    \item The subgroup $S^{\circ}_{n^{\prime}}$ which is generated by $\{ s_{1^{\prime}}, s_{2}, \ldots, s_{n-1}\}.$
   \end{itemize}

Now, we find the generalized standard $OGS$ presentation for the elements of the subgroup $S^{\circ}_{n^{\prime}}$.
\begin{theorem}\label{s-nn-standard-ogs}
Let $\pi \in S^{\circ}_{n^{\prime}}$ such that $\pi^{\prime}$ is presented in the standard $OGS$ presentation $\pi^{\prime}=t_{k_{1}}^{i_{k_{1}}} \cdots t_{k_{m}}^{i_{k_{m}}}$ is a standard $OGS$ elementary element (as it is defined in Definition \ref{elementary}), Then  the decomposition of $\pi$ into the presentation $\pi=\pi^{\bullet}\cdot \pi^{\circ}$ by Theorem \ref{wt}, as follow:
\begin{enumerate}
    \item $\pi^{\bullet}=1$ ~and  ~$\pi=\pi^{\circ}=t_{k_{1}}^{i_{k_{1}}} \cdots t_{k_{m}}^{i_{k_{m}}}$  in case $\sum_{j=1}^{m} i_{k_{j}} = k_{1}$.
    \item $\pi^{\bullet} = w_{maj(\pi^{\prime})}$ ~and ~$\pi^{\circ}=t_{k_{1}}^{i_{k_{1}}} \cdots t_{k_{m}}^{i_{k_{m}}}$ in case $\sum_{j=1}^{m} i_{k_{j}} < k_{1}$.
\end{enumerate}
\end{theorem}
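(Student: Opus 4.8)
The plan is to realize $S^{\circ}_{n^{\prime}}$ as the image of $S^{\circ}_n$ under the automorphism of $D_n$ that exchanges the two branch nodes of the Coxeter graph, then to compute the resulting signed permutation and read off the decomposition $\pi=\pi^{\bullet}\cdot\pi^{\circ}$. Let $\kappa\in B_n$ be the signed permutation $[-1;~2;~3;\ldots;~n]$, so $\kappa^{2}=1$ and $\kappa\notin D_n$, but $\kappa$ normalizes $D_n$; hence conjugation by $\kappa$ is an automorphism of $D_n$. First I would check, on the permutation level, that $\kappa\cdot s_{1^{\prime}}\cdot\kappa=s_1$, $\kappa\cdot s_1\cdot\kappa=s_{1^{\prime}}$ and $\kappa\cdot s_i\cdot\kappa=s_i$ for $2\leq i\leq n-1$ (the $s_i$ with $i\geq3$ are supported away from coordinate $1$, and the cases $i\in\{1^{\prime},1,2\}$ are an immediate computation). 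Thus $\kappa\cdot S^{\circ}_n\cdot\kappa=S^{\circ}_{n^{\prime}}$, so every $\pi\in S^{\circ}_{n^{\prime}}$ equals $\kappa\cdot\sigma\cdot\kappa$ for a unique $\sigma\in S^{\circ}_n$. Since the generators of $S^{\circ}_n$ carry no sign changes, $\sigma$ is an ordinary permutation of $\{1,\dots,n\}$; and since conjugation by $\kappa$ preserves underlying unsigned permutations, $\Phi(\sigma)=\Phi(\pi)=\pi^{\prime}$, whence $\sigma=\pi^{\prime}$. By Theorem \ref{canonical-sn}, applied inside $S^{\circ}_n\cong S_n$, this forces $\sigma=t_{k_1}^{i_{k_1}}\cdots t_{k_m}^{i_{k_m}}$.

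Next I would compute $\pi=\kappa\cdot\sigma\cdot\kappa$ entrywise through $\pi(x)=\kappa(\sigma(\kappa(x)))$, using $\sigma=\pi^{\prime}$. A short case analysis gives $\pi(x)=\pi^{\prime}(x)$ for every $x\in\{1,\dots,n\}$ except that $\pi(1)=-\pi^{\prime}(1)$ when $\pi^{\prime}(1)\neq1$, and $\pi(x)=-\pi^{\prime}(x)$ when $x\geq2$ and $\pi^{\prime}(x)=1$. Hence $\pi=\pi^{\bullet}\cdot\pi^{\circ}$, where $\pi^{\circ}:=\sigma=t_{k_1}^{i_{k_1}}\cdots t_{k_m}^{i_{k_m}}\in S^{\circ}_n$ and $\pi^{\bullet}:=\pi\cdot(\pi^{\circ})^{-1}$ is the pure sign change supported on $N:=\{\,x:\pi(x)\neq\pi^{\prime}(x)\,\}$; since $(\pi^{\bullet})^{\prime}=1$ we have $\pi^{\bullet}\in Id^{\bullet}_n$, and by the uniqueness of a decomposition $\pi=\pi^{\bullet}\cdot\pi^{\circ}$ with $\pi^{\bullet}\in Id^{\bullet}_n$, $\pi^{\circ}\in S^{\circ}_n$ (the uniqueness clause of Theorem \ref{wt}, which also follows from $D_n=Id^{\bullet}_n\rtimes S^{\circ}_n$), this is exactly the decomposition of Theorem \ref{wt}. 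It remains only to identify $N$, equivalently $\pi^{\bullet}$.

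For that I would follow the entry $1$ through the product $\pi^{\prime}=t_{k_1}^{i_{k_1}}\cdots t_{k_m}^{i_{k_m}}$. Put $M:=\sum_{j=1}^{m}i_{k_j}$; by Theorem \ref{theorem-elementary} $M=maj(\pi^{\prime})$, and $M\leq k_1<k_2<\cdots<k_m$ because $\pi^{\prime}$ is elementary. One checks by induction that after the first factor $t_{k_1}^{i_{k_1}}$ the value carried by coordinate $1$ stays in $\{1,\dots,k_1\}$, so the condition $M\leq k_1$ prevents any later cyclic factor from carrying it past $1$; this yields the closed formula $\pi^{\prime}(1)=k_1+1-M$. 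If $\sum_{j=1}^{m}i_{k_j}=k_1$ then $\pi^{\prime}(1)=1$, so $N=\emptyset$, $\pi^{\bullet}=1$ and $\pi=\pi^{\circ}=t_{k_1}^{i_{k_1}}\cdots t_{k_m}^{i_{k_m}}$, which is case $1$. If $\sum_{j=1}^{m}i_{k_j}<k_1$ then $\pi^{\prime}(1)=k_1+1-M\geq2$; since by Theorem \ref{theorem-elementary} $\pi^{\prime}$ has its unique descent at position $M$, it is increasing on $\{1,\dots,M\}$ and on $\{M+1,\dots,n\}$, whence the entry $1$ occupies position $M+1$, i.e. $(\pi^{\prime})^{-1}(1)=M+1=maj(\pi^{\prime})+1$. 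Then $N=\{\,1,\ maj(\pi^{\prime})+1\,\}$, which by Remark \ref{perm-w} is exactly the set of coordinates negated by $w_{maj(\pi^{\prime})}$; hence $\pi^{\bullet}=w_{maj(\pi^{\prime})}$ and $\pi^{\circ}=t_{k_1}^{i_{k_1}}\cdots t_{k_m}^{i_{k_m}}$, which is case $2$.

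The only nontrivial step is the orbit computation: one must verify carefully, from $M\leq k_1$ and $k_1<k_2<\cdots<k_m$, that after $t_{k_1}^{i_{k_1}}$ the first coordinate's value never cycles past $1$ under the remaining factors, which is precisely what makes $\pi^{\prime}(1)=k_1+1-M$ valid; the rest is bookkeeping with the sign conventions and with left-to-right composition. (If $\pi^{\prime}=1$, i.e. $m=0$, then $\pi=1$ and the statement is vacuous.)
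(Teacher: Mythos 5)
Your proposal is correct, but it proves the theorem by a genuinely different route than the paper. The paper works entirely with reduced words: it takes the normal form of the elementary element $\pi^{\prime}$ from \cite{S1} (Theorem 28), observes that when $\sum_{j}i_{k_j}=k_1$ no $s_1$ occurs in it (so $\pi=\pi^{\circ}$ already lies in $S^{\circ}_{n^{\prime}}$), and when $\sum_j i_{k_j}<k_1$ it left-multiplies by $w_{maj(\pi^{\prime})}$, cancels the initial run $s_{maj(\pi^{\prime})}\cdots s_1$ against $t_{maj(\pi^{\prime})+1}$, and exhibits a word in $s_{1^{\prime}},s_2,\ldots,s_{n-1}$ only; in both cases the element constructed lies in $S^{\circ}_{n^{\prime}}$ and has image $\pi^{\prime}$ under $\Phi$, which identifies it with $\pi$. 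You instead use the graph automorphism of $D_n$ given by conjugation with the odd sign change $\kappa=[-1;2;\ldots;n]$, which swaps $s_1\leftrightarrow s_{1^{\prime}}$ and hence carries $S^{\circ}_n$ onto $S^{\circ}_{n^{\prime}}$, and then you compute the signed permutation $\kappa\pi^{\prime}\kappa$ entrywise; the theorem then reduces to two combinatorial facts about elementary elements, $\pi^{\prime}(1)=k_1+1-maj(\pi^{\prime})$ and $(\pi^{\prime})^{-1}(1)=maj(\pi^{\prime})+1$, the latter following from the single-descent property of Theorem \ref{theorem-elementary}, after which Remark \ref{perm-w} identifies the sign-change part with $w_{maj(\pi^{\prime})}$. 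Your appeal to the uniqueness of $\pi=\pi^{\bullet}\cdot\pi^{\circ}$ is legitimate (it is the uniqueness clause of Theorem \ref{wt}, equivalently $Id^{\bullet}_n=\ker\Phi$ and $Id^{\bullet}_n\cap S^{\circ}_n=1$), and the orbit computation you flag as the only delicate point is indeed valid since the total decrement $maj(\pi^{\prime})\leq k_1$ rules out wrap-around. What the two approaches buy: yours is more structural, avoids reduced-word bookkeeping, and explains transparently why exactly the coordinates $1$ and $maj(\pi^{\prime})+1$ change sign; the paper's word-level argument, on the other hand, produces an explicit normal form of $\pi$, which is what its subsequent length computations (e.g.\ Proposition \ref{len-pi}) are built on.
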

 \begin{proof}
 We divide the proof into the two mentioned cases. First, notice, by Theorem \ref{wt}, By considering $\pi^{\circ}$ as an element of $S_n$, $\pi^{\circ}$ presents the same element as $\pi^{\prime}$, and  the standard $OGS$ presentation (as it presented in Theorem \ref{canonical-sn}) and the normal form of $\pi^{\prime}$, is the same to the generalized standard $OGS$ presentation (as it presented in Theorem \ref{ogs-dn}) and the normal form of $\pi^{\circ}$. Hence, in the proof we use $maj(\pi^{\prime})$ and $norm(\pi^{\prime})$ when we deal with $\pi^{\circ}$.
 By using the normal form of standard $OGS$ elementary element of $S_n$ as it is described in \cite{S1}, Theorem 28. 
 \begin{equation}\label{norm-standard}
 norm(\pi^{\prime})=\prod_{u=\rho_1}^{k_1-1}\prod_{r=0}^{\rho_1-1}s_{u-r}\cdot \prod_{u=k_1}^{k_2-1}\prod_{r=0}^{\rho_2-1}s_{u-r}\cdot \prod_{u=k_2}^{k_3-1}\prod_{r=0}^{\rho_3-1}s_{u-r}\cdots \prod_{u=k_{m-1}}^{k_m-1}\prod_{r=0}^{\rho_m-1}s_{u-r},
 \end{equation}
where, $\rho_{j}=\sum_{x=j}^{m}i_{k_{x}}$ for $1\leq j\leq m$.
 
$\bullet$ We start with the first case where $\pi^{\prime} = t_{k_{1}}^{i_{k_{1}}} \cdots t_{k_{m}}^{i_{k_{m}}}$ is a standard OGS elementary element  and $\sum_{j=1}^{m} i_{k_{j}} = k_{1}$.\\
Since $\rho_1=\sum_{j=1}^m i_{k_j}=k_1$, Equation \ref{norm-standard} has the following form:
$$\text{norm} (\pi^{\prime})=\prod_{u=k_1}^{k_2-1}\prod_{r=0}^{\rho_2-1}s_{u-r}\cdot \prod_{u=k_2}^{k_3-1}\prod_{r=0}^{\rho_3-1}s_{u-r}\cdots \prod_{u=k_{m-1}}^{k_m-1}\prod_{r=0}^{\rho_m-1}s_{u-r},$$
Notice, the minimal value for $u-r$ in the formula of $norm(\pi^{\prime})$ occurs where $u=k_1$ and $r=\rho_2-1$. 
Then, 
$u-r=k_1-(\rho_2-1).$
Since $k_1=\sum_{x=1}^m i_{k_x}=i_1+\rho_2$, we get $\rho_2=k_1-i_{k_1}$.
Hence, $k_1-(\rho_2-1)=k_1-(k_1-i_{k_1}-1)=i_{k_1}+1$.
Now, since
$i_{k_{1}} \geq 1$, we have that 
$i_{k_{1}}+1 \geq 2$. 
Hence, the minimal value for $u-r$ in the formula for $norm(\pi^{\prime})$ is $u-r=2$, in case where $\sum_{j=1}^{m} i_{k_{j}} = k_{1}$.
Therefore, there is no occurrence of  $s_{1}$ in the normal form presentation of $\pi^{\prime}$ (which is by Theorem \ref{wt} is the same to the normal form of $\pi^{\circ}$).  Hence, $$\pi=\pi^{\circ}.$$\\

$\bullet$ Now, we turn to the second case:  $\pi^{\prime} = t_{k_{1}}^{i_{k_{1}}} \cdots t_{k_{m}}^{i_{k_{m}}}$ and $\sum_{j=1}^{m} i_{k_{j}} < k_{1}$.\\
Consider the normal form of $\pi^{\prime}$:
Since $\rho_1=\sum_{j=1}^m i_{k_j}<k_1$, Equation \ref{norm-standard} has the following form:
$$\text{norm}(\pi^{\prime}) = s_{maj(\pi^{\prime})} \cdots s_{1} \cdot 
s_{maj(\pi^{\prime})+1} \cdots s_{2} \cdots $$
Notice, that by the definition of $t_{j}$ for $2\leq j\leq n$, as it is defined in Theorem \ref{canonical-sn} as follow:
$$t_{maj(\pi^{\prime})+1}=s_1\cdot s_2\cdots s_{maj(\pi^{\prime})}.$$ Hence, 
$$t_{maj(\pi^{\prime})+1}\cdot \pi^{\circ}=t_{maj(\pi^{\prime}+1)}^{1}\cdot t_{k_1}^{i_{k_1}}\cdots t_{k_{m}}^{i_{k_m}}$$
$$=(s_1\cdot s_2\cdots s_{maj(\pi^{\prime})})\cdot (s_{maj(\pi^{\prime})} \cdots s_{1}) \cdot 
s_{maj(\pi^{\prime})+1} \cdots s_{2} \cdots $$
$$=s_{maj(\pi^{\prime})+1} \cdots s_{2} \cdots $$
Notice, that $\sum_{j=1}^m i_{k_j}+1=maj(\pi^{\prime})+1$. Hence, by the proof of the proposition for case 1, there is no occurrence of $s_1$ in the normal form of $t_{maj(\pi^{\prime})+1}\cdot \pi^{\prime}.$

Then by multiplying  $\pi^{\prime}$ by  $w_{{maj}(\pi^{\prime})}$ on the left hand side,  we have:
$$w_{maj(\pi^{\prime})} \cdot \pi^{\circ}= (s_{maj(\pi^{\prime})} \cdots s_2\cdot s_{1^{\prime}} \cdot s_{1} \cdot s_{2} \cdots s_{maj(\pi^{\prime})}) \cdot t_{maj(\pi^{\prime})+1}\cdot \pi^{\circ}$$
$$ = s_{maj(\pi^{\prime})} \cdots s_{1^{\prime}} \cdot t_{maj(\pi^{\prime})+1}\cdot \pi^{\circ}$$
Hence, we get
$$\pi= w_{maj(\pi^{\prime})} \cdot \pi^{\circ}.$$

 \end{proof}
\begin{example}

Consider $$\pi = t_{5}^{2} \cdot t_{6}^{3}$$
Then, by \cite{S1}, Theorem 28, the normal form of $\pi$ as follow: $$\text{norm} (\pi) = s_{5}\cdot s_{4} \cdot s_{3}.$$ 
Since there is no occurrence of $s_1$ in $\text{norm}(\pi)$, we get  $$\pi \in S^{\circ}_{n^{\prime}}.$$
\end{example}
\begin{example}
 Consider$$\pi = w_4\cdot t_{5}^{2} \cdot t_{6}^{2}$$
 Then, $$\pi^{\prime}= t_{5}^{2} \cdot t_{6}^{2}.$$
Then, by \cite{S1}, Theorem 28, the normal form of $\pi^{\prime}$ as follow: $$\text{norm}(\pi^{\prime})= (s_{4} \cdot s_{3} \cdot s_{2} \cdot s_{1}) \cdot (s_{5} \cdot s_{4}) $$
Notice, $$maj(\pi^{\prime})= 4$$ 
Hence, by Theorem \ref{s-nn-standard-ogs} $$\pi \in S^{\circ}_{n^{\prime}}.$$
Indeed, 
$$\pi=w_{4} \cdot t_{5}^{2} \cdot t_{6}^{2} = (s_{4} \cdot s_{3} \cdot s_{2} \cdot s_{1^{\prime}} \cdot s_{1} \cdot s_{2} \cdot s_{3} \cdot s_{4}) \cdot (s_{4} \cdot s_{3} \cdot s_{2} \cdot s_{1}) \cdot (s_{5} \cdot s_{4})$$ $$ = (s_{4} \cdot s_{3} \cdot s_{2} \cdot s_{1^{\prime}}) \cdot (s_{5} \cdot s_{4})=norm(\pi).$$

\end{example}
\begin{theorem}\label{parabolic-factorization}
 Let $\pi$ be an element of  $S^{\circ}_{n^{\prime}}$ such that  $\pi^{\prime}$ is not necessarily a standard $OGS$ elementary element. Consider the standard $OGS$ elementary factorization of $\pi^{\prime}$  as it is defined in Definition \ref{canonical-factorization-def} and then get $\pi^{\prime} = \pi_{1}^{\prime} \cdots \pi_{z}^{\prime}$, where the presentation of an element $\pi \in S^{\circ}_{n^{\prime}}$ by the generalized standard $OGS$ presentation as follow: 
 \begin{itemize}
     \item $\pi=\pi_{1}\cdot \pi_{2}\cdots \pi_{z}$;
     \item $\pi_{v} = \pi_{v}^{\prime}$ , where $\pi_{v}^{\prime} = t_{h_{v,1}}^{i_{h_{v,1}}} \cdot t_{h_{v,2}}^{i_{h_{v,2}}} \cdots t_{h_{v,m(\pi_{1})}}^{i_{h_{v,m(\pi_{1})}}} $ , where $\text{maj}(\pi_{v}^{\prime})= h_{v,1}$.
     \item $\pi_{v} = w_{maj(\pi_{v}^{\prime})} \cdot \pi_{v}^{\prime} $ ,where $\pi_{v}^{\prime} = t_{h_{v,1}}^{i_{h_{v,1}}} \cdot t_{h_{v,2}}^{i_{h_{v,2}}} \cdots t_{h_{v,m(\pi_{1})}}^{i_{h_{v,m(\pi_{1})}}} $ in case $\text{maj}(\pi_{v}^{\prime}) < h_{v,1}$.
 \end{itemize}

\end{theorem}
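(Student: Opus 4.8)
The plan is to verify that the product $\hat{\pi} := \pi_{1}\cdot \pi_{2}\cdots \pi_{z}$ exhibited in the statement is equal to $\pi$ and is already written in the generalized standard $OGS$ canonical form of Theorem~\ref{ogs-dn}. The key structural fact I would use is that $\Phi$ (Definition~\ref{hom-dn-sn}) restricts to an \emph{isomorphism} $S^{\circ}_{n^{\prime}}\xrightarrow{\ \sim\ } S_{n}$: deleting the node $s_{1}$ from the Coxeter diagram of $D_{n}$ leaves the path $s_{1^{\prime}}-s_{2}-\cdots-s_{n-1}$, so $S^{\circ}_{n^{\prime}}$ is a standard parabolic subgroup of type $A_{n-1}$ and $|S^{\circ}_{n^{\prime}}|=n!$, while $\Phi(S^{\circ}_{n^{\prime}})=\langle s_{1},\ldots,s_{n-1}\rangle=S_{n}$; a surjection between finite groups of equal order is bijective. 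Hence an element of $S^{\circ}_{n^{\prime}}$ is pinned down by its $\Phi$-image, and it is enough to check (a) $\Phi(\hat{\pi})=\pi^{\prime}$ and (b) $\hat{\pi}\in S^{\circ}_{n^{\prime}}$; these together give $\hat\pi=\pi$.

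Step (a) is immediate from Corollary~\ref{w-reduce}: each $\Phi(w_{maj(\pi_{v}^{\prime})})=1$ and $\Phi(\pi_{v}^{\prime})=\pi_{v}^{\prime}$, so $\Phi(\hat{\pi})=\pi_{1}^{\prime}\cdots\pi_{z}^{\prime}=\pi^{\prime}$, the last equality being the standard $OGS$ elementary factorization of $\pi^{\prime}$. The real content is step (b), and here the point is that each factor $\pi_{v}$, viewed inside $D_{n}$, admits a word in the generators $\{s_{1^{\prime}},s_{2},\ldots,s_{n-1}\}$ with no occurrence of $s_{1}$; once this is known, $\hat{\pi}$, being a product of such words, lies in $\langle s_{1^{\prime}},s_{2},\ldots,s_{n-1}\rangle=S^{\circ}_{n^{\prime}}$. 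For one elementary factor this is precisely the normal-form computation in the proof of Theorem~\ref{s-nn-standard-ogs}, applied with $\pi_{v}^{\prime}$ in place of $\pi^{\prime}$ (its smallest $t$-subscript $h_{v,1}$ playing the role of $k_{1}$): when $maj(\pi_{v}^{\prime})=h_{v,1}$, the normal form of $\pi_{v}^{\prime}$ supplied by \cite{S1}, Theorem 28, has least index $i_{h_{v,1}}+1\ge 2$ and so is free of $s_{1}$; when $maj(\pi_{v}^{\prime})<h_{v,1}$, one computes $\pi_{v}=w_{maj(\pi_{v}^{\prime})}\cdot\pi_{v}^{\prime}=s_{maj(\pi_{v}^{\prime})}\cdots s_{2}\cdot s_{1^{\prime}}\cdot t_{maj(\pi_{v}^{\prime})+1}\cdot\pi_{v}^{\prime}$, whose tail $t_{maj(\pi_{v}^{\prime})+1}\cdot\pi_{v}^{\prime}$ is again $s_{1}$-free by the argument of the first case. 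This establishes (b), hence $\hat\pi=\pi$.

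Finally I would check that $\pi_{1}\cdot\pi_{2}\cdots\pi_{z}$, spelled out in the $t_{k}$'s and $w_{L}$'s, is literally the canonical word of Theorem~\ref{ogs-dn}: the $t$-subscripts must be strictly increasing with exponents in $[0,k-1]$, the $w$'s must occur to the first power at pairwise distinct subscripts, and each $w_{L}$ must lie in the slot between $t_{L}$ and $t_{L+1}$. The $t$-part is inherited from the standard $OGS$ canonical form of $\pi^{\prime}$ together with Theorem~\ref{theorem-factorization} (consecutive elementary factors overlap in at most one boundary $t$-generator, whose two exponents combine to a value still $<k$). For the $w$-part, writing $h_{v,m}$ for the largest $t$-subscript in $\pi_{v}^{\prime}$, the inequalities $maj(\pi_{v-1}^{\prime})\le h_{v-1,1}\le h_{v-1,m}\le maj(\pi_{v}^{\prime})$ (from Definitions~\ref{elementary} and \ref{canonical-factorization-def}) give $maj(\pi_{v-1}^{\prime})\le maj(\pi_{v}^{\prime})$, with equality impossible because it would force $\pi_{v-1}^{\prime}=t_{c}^{c}$; so the $maj(\pi_{v}^{\prime})$ strictly increase, making the $w_{maj(\pi_{v}^{\prime})}$ that occur pairwise distinct and each appearing once. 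And since $h_{v-1,m}\le maj(\pi_{v}^{\prime})<h_{v,1}$, the generator $w_{maj(\pi_{v}^{\prime})}$ is inserted exactly between the last $t$ of $\pi_{v-1}^{\prime}$ and the first $t$ of $\pi_{v}^{\prime}$, i.e. between $t_{maj(\pi_{v}^{\prime})}$ and $t_{maj(\pi_{v}^{\prime})+1}$, which is its canonical slot. I expect the identity $\hat{\pi}=\pi$ to be the short part, and the genuine work to be this last bookkeeping — tracking the overlap boundary between consecutive elementary factors and the precise location of each $w$ relative to the merged $t$-block.
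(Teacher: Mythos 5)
Your proposal is correct and takes essentially the same route as the paper: both reduce the claim to applying Theorem \ref{s-nn-standard-ogs} (i.e., its normal-form computation showing each factor $\pi_{v}$ is $s_{1}$-free with image $\pi_{v}^{\prime}$) to every standard $OGS$ elementary factor of $\pi^{\prime}$. The extra scaffolding you supply --- that $\Phi$ restricts to an isomorphism $S^{\circ}_{n^{\prime}}\to S_{n}$, so the exhibited product is identified with $\pi$ once it lies in $S^{\circ}_{n^{\prime}}$ and has image $\pi^{\prime}$, together with the bookkeeping that the word is literally in the canonical form of Theorem \ref{ogs-dn} --- just makes explicit what the paper's one-line proof leaves implicit.
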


\begin{proof}
Look at the standard $OGS$ elementary factorization of $\pi^{\prime}$ (as it defined in Definition \ref{canonical-factorization-def}). Then we have, $$\pi^{\prime} = \pi_{1}^{\prime} \cdots \pi_{z}^{\prime}$$
where $\pi_{v}^{\prime}$ is a standard $OGS$ ekementary factor of $\pi^{\prime}$.
By  applying Theorem \ref{s-nn-standard-ogs} on every standard $OGS$ elementary factor $\pi_{v}^{\prime}$ (for $1\leq v\leq z$), we get the desired result.
\end{proof}
\begin{example}
Consider:
$$\pi = w_3\cdot t_{4}^{2} \cdot t_{5} \cdot t_{6}^{3} \cdot t_{9} \cdot t_{11}^{2}$$
Then, $$\pi^{\prime}=t_{4}^{2} \cdot t_{5} \cdot t_{6}^{3} \cdot t_{9} \cdot t_{11}^{2}.$$
By considering the standard $OGS$ elementary factorization for $\pi^{\prime}$, as it is defined in Definition \ref{canonical-factorization-def}, we get:

$$ \pi_{1}^{\prime} = t_{4}^{2} \cdot t_{5} \quad\quad \pi_{2}^{\prime} = t_{6}^{3} \cdot t_{9} \cdot t_{11}^{2}$$

 $$maj(\pi_{1}^{\prime}) < h_{1} , (3<4)\quad\quad maj(\pi_{2}^{\prime})= h_{1} , (6=6).$$

Hence, by Theorem \ref{s-nn-standard-ogs}
     $$w_{maj(\pi_{1}^{\prime})}\cdot \pi_{1}^{\prime}=w_{3} \cdot t_{4}^{2} \cdot t_{5}\in S^{\circ}_{n^{\prime}}\quad\quad \pi_{2}^{\prime}=t_{6}^{3} \cdot t_{9} \cdot t_{11}^{2}\in S^{\circ}_{n^{\prime}}.$$

Finally, by Theorem \ref{parabolic-factorization} the following holds:
$$\pi = (w_{3} \cdot t_{4}^{2} \cdot t_{5}) \cdot (t_{6}^{3} \cdot t_{9} \cdot t_{11}^{2})=w_{maj(\pi_{1}^{\prime})}\cdot \pi_{1}^{\prime}\cdot \pi_{2}^{\prime}\in S^{\circ}_{n^{\prime}}.$$
\end{example}

\section{The Coxeter length of elements of $D_{n}$}\label{len-of-Dn}

In this section we give a method to find the Coxeter length of elements of $D_n$ by using the generalized standard $OGS$ as it is defined in Definition \ref{ogs-dn}. We start with some lemmas which explain specific relations between the Coxeter generators $\{s_{1^{\prime}}, s_1, s_2\ldots s_{n-1}\} $ of $D_n$ and the elements\\ $w_k=s_k\cdot s_{k-1}\cdots s_1\cdot s_{1^{\prime}}\cdot s_2\cdots s_k$ for $1\leq k\leq n-1$, which are used  to find the Coxeter length of elements in $D_n$.

\begin{lemma}\label{ell-w}
For every $1\leq k\leq n-1$ let
\begin{equation}\label{equation-w}
 w_k=s_k\cdot s_{k-1}\cdots s_2\cdot s_1\cdot s_{1^{\prime}}\cdot s_2\cdots s_{k-1}\cdot s_k.   
\end{equation}
 as it is defined in Definition \ref{def w}. Then
\begin{itemize}
    \item $\ell(w_k)=2\cdot k$.
    \item $\ell(w_j\cdot w_k)=\ell(w_j)+\ell(w_k)=2\cdot (j+k)$\\ for $1\leq j, k\leq n-1$ and $j\neq k$.
\end{itemize}

\end{lemma}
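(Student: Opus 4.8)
The plan is to compute both lengths directly from the realisation of $D_n$ as the group of even signed permutations, using the standard combinatorial length formula for type $D$ (see \cite{BB}, Chapter 8): for $\pi\in D_n$,
$$\ell(\pi)=\operatorname{inv}(\pi)+\operatorname{nsp}(\pi),$$
where $\operatorname{inv}(\pi)=|\{(i,j):1\le i<j\le n,\ \pi(i)>\pi(j)\}|$ is the number of ordinary inversions of the window $(\pi(1),\dots,\pi(n))$, and $\operatorname{nsp}(\pi)=|\{(i,j):1\le i<j\le n,\ \pi(i)+\pi(j)<0\}|$ is the number of negative-sum pairs. The signed permutations attached to $w_k$ and to $w_j\cdot w_k$ have already been written down in the excerpt, so both statistics can be read off by inspection.

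For the first item, Remark \ref{perm-w} tells us that $w_k$ fixes every index except $w_k(1)=-1$ and $w_k(k+1)=-(k+1)$. Hence every inversion of $w_k$ is a pair $(i,k+1)$ with $1\le i\le k$ (each of $w_k(1)=-1$ and $w_k(i)=i$ for $2\le i\le k$ exceeds $-(k+1)$, while no pair avoiding position $k+1$ is inverted), so $\operatorname{inv}(w_k)=k$. Likewise the negative-sum pairs of $w_k$ are exactly the pairs $(i,k+1)$ with $1\le i\le k$, since $w_k(i)+w_k(k+1)<0$ in each such case whereas every pair avoiding position $k+1$ has positive sum; so $\operatorname{nsp}(w_k)=k$. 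Therefore $\ell(w_k)=k+k=2k$. (Alternatively: because $s_1$ and $s_{1^{\prime}}$ commute, the defining word of $w_k$ equals $(s_k\cdots s_2)\cdot s_{1^{\prime}}\cdot(s_1 s_2\cdots s_k)$, which is exactly the normal factor $\operatorname{norm}_k$ of Definition \ref{D_n-normal-form} with $y_k^{\prime}=k$ and all remaining factors trivial; being the canonical reduced form it has length $(k-1)+1+k=2k$.)

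For the second item, by Proposition \ref{exchange-dn} we have $w_j\cdot w_k=w_k\cdot w_j$, so we may assume $j<k$. As computed in the proof of Proposition \ref{exchange-dn}, $w_j\cdot w_k$ fixes every index except positions $1,\ j+1,\ k+1$, where it takes the values $-1,\ -(j+1),\ -(k+1)$ respectively. The inverted pairs are then the three pairs with both coordinates in $\{1,j+1,k+1\}$, together with the pairs $(a,j+1)$ with $2\le a\le j$ and the pairs $(a,k+1)$ with $2\le a\le k$ and $a\neq j+1$, and no others; this gives $\operatorname{inv}(w_j\cdot w_k)=3+(j-1)+(k-2)=j+k$. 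An entirely parallel count shows the negative-sum pairs are the same three pairs inside $\{1,j+1,k+1\}$ together with $(a,j+1)$ for $2\le a\le j$ and $(a,k+1)$ for $2\le a\le k$, $a\neq j+1$, so $\operatorname{nsp}(w_j\cdot w_k)=3+(j-1)+(k-2)=j+k$ as well. Hence $\ell(w_j\cdot w_k)=(j+k)+(j+k)=2(j+k)=\ell(w_j)+\ell(w_k)$.

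The routine part is the enumeration above; the only point requiring attention is that $1\le j<k$ forces $j+1\in\{2,\dots,k\}$, so in the counts with second coordinate $k+1$ the index $a=j+1$ must be excluded exactly once, and one should check that the families ``second coordinate $j+1$'' and ``second coordinate $k+1$'' are disjoint and together exhaust all inverted (respectively, negative-sum) pairs. Beyond that there is nothing hard: $\ell(w_j\cdot w_k)\le\ell(w_j)+\ell(w_k)$ holds automatically, so the real content of the second item is the lower bound $\ell(w_j\cdot w_k)\ge 2(j+k)$, i.e.\ that concatenating the two defining words yields a reduced expression for $w_j\cdot w_k$.
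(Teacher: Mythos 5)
Your method is genuinely different from the paper's. The paper proves both items by exhibiting reduced words: after commuting $s_1\cdot s_{1^{\prime}}=s_{1^{\prime}}\cdot s_1$, the defining word of $w_k$ is exactly one factor of the normal form of Definition \ref{D_n-normal-form} (so $\ell(w_k)=2k$), and for $j<k$ the concatenation of the two words is again in normal form, so the lengths add. You instead evaluate the type-$D$ length statistic $\ell(\pi)=\operatorname{inv}(\pi)+\operatorname{nsp}(\pi)$ on the explicit signed permutations; this is a legitimate alternative (and your parenthetical remark for the first item essentially reproduces the paper's normal-form argument). The normal-form route gives reducedness of the concatenated word for free, while your route reduces everything to a finite count on windows; both are sound in principle.

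However, your second item rests on a false description of the permutation $w_j\cdot w_k$. You quote (from the displayed window in the proof of Proposition \ref{exchange-dn}, which itself contains a typo) that $w_j\cdot w_k$ sends $1\mapsto -1$, $j+1\mapsto -(j+1)$, $k+1\mapsto -(k+1)$; such an element has three sign changes and therefore does not even lie in $D_n$. In fact $w_k(1)=-1$ and $w_j(-1)=-w_j(1)=1$, so $(w_j\cdot w_k)(1)=1$, and the product negates only the positions $j+1$ and $k+1$ while fixing everything else. Your enumeration should be redone on this correct window: the inverted pairs (and likewise the negative-sum pairs) are exactly $(a,j+1)$ for $1\le a\le j$, $(a,k+1)$ for $1\le a\le k$ with $a\ne j+1$, and $(j+1,k+1)$, giving $\operatorname{inv}=\operatorname{nsp}=j+(k-1)+1=j+k$ and hence $\ell(w_j\cdot w_k)=2(j+k)$ as claimed. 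So the final numbers survive the correction, but as written the key intermediate claim is wrong and the count is carried out on a window that is not the permutation in question.
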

\begin{proof}
The presentation of $w_k$ as it is presented in Equation \ref{equation-w}, is a presentation by the normal form of $D_n$ (See Definition \ref{D_n-normal-form}). Therefore, the presentation is reduced which contains $2\cdot k$ Coxeter generators. Hence, $\ell(w_k)=2\cdot k$.
Now, consider $\ell(w_j\cdot w_k)$. Since $j\neq k$ and by Proposition \ref{exchange-dn}, $w_j\cdot w_k=w_k\cdot w_j$ for every $1\leq j,k\leq n-1$, without loss of generality, we may assume $j<k$. 
Then by Definition \ref{D_n-normal-form}, the presentation:
$$w_j\cdot w_k=(s_j\cdot s_{j-1}\cdots s_2\cdot s_1\cdot s_{1^{\prime}}\cdot s_2\cdots s_{j-1}\cdot s_j)\cdot(s_k\cdot s_{k-1}\cdots s_2\cdot s_1\cdot s_{1^{\prime}}\cdot s_2\cdots s_{k-1}\cdot s_k)$$
is a presentation by the normal form. Hence, we get the result 
$$\ell(w_j\cdot w_k)=\ell(w_j)+\ell(w_k)=2\cdot (j+k),$$
for $1\leq j, k\leq n-1$ and $j\neq k$.
\end{proof}

\begin{lemma}\label{wl-sk}
Consider the group $D_n$ with the set of Coxeter generators $\{s_{1^{\prime}}, s_1, s_2\ldots s_{n-1}\} $ and for every $1\leq k\leq n-1$ let $w_k$ be the element of $D_n$ as it is defined in Definition \ref{def w} then the following relations holds.
\begin{itemize}
    \item $w_{L}\cdot s_{1} = s_{1^{\prime}} \cdot w_{L} \quad \text{for} \quad 2\leqslant L \leqslant n-1.$
    \item $w_{L}\cdot s_{1^{\prime}} = s_{1} \cdot w_{L} \quad \text{for} \quad 2\leqslant L \leqslant n-1.$
    \item $w_{L}\cdot s_{k} = s_{k} \cdot w_{L} \quad \text{for} \quad 2\leqslant k \leqslant L-1 \leqslant n-2.$
\end{itemize}
 
\end{lemma}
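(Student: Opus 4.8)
\emph{Proof strategy.} The plan is to argue entirely inside the even signed permutation realization of $D_n$, exactly as in the proof of Proposition~\ref{exchange-dn}, using the explicit actions on $[\pm n]$ together with the left-to-right multiplication convention $(\sigma\tau)(i)=\tau(\sigma(i))$ fixed in Definition~\ref{sn}. Recall from Remark~\ref{perm-w} that $w_L$ negates the coordinates $1$ and $L+1$ and fixes every other $j$ with $2\le j\le n$; that is, on $[\pm n]$ it sends $1\mapsto-1$ and $L+1\mapsto-(L+1)$ and fixes $\pm j$ for $2\le j\le L$ and for $L+2\le j\le n$. The generator $s_1$ acts as $(1,2)(-1,-2)$, the generator $s_{1^{\prime}}$ sends $1\mapsto-2$ and $2\mapsto-1$ (and the inverse images accordingly) while fixing $\pm j$ for $j\ge3$, and for $2\le k\le n-1$ the generator $s_k$ acts as $(k,k+1)(-k,-(k+1))$.

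The third relation is then immediate: when $2\le k\le L-1$ the support of $s_k$ on $[\pm n]$ lies in $\{\pm k,\pm(k+1)\}\subseteq\{\pm2,\dots,\pm L\}$, which is disjoint from the support $\{\pm1,\pm(L+1)\}$ of $w_L$, so the two signed permutations commute and $w_L\cdot s_k=s_k\cdot w_L$. For the first relation it suffices to compare the images of the only points not fixed by both sides, namely $1,2,L+1$ (every other $j$ with $2\le j\le n$ is fixed by each of $s_1$, $s_{1^{\prime}}$, $w_L$, hence by both composites). Applying $w_L$ then $s_1$ gives $1\mapsto-1\mapsto-2$, $2\mapsto2\mapsto1$, $L+1\mapsto-(L+1)\mapsto-(L+1)$, while applying $s_{1^{\prime}}$ then $w_L$ gives $1\mapsto-2\mapsto-2$, $2\mapsto-1\mapsto1$, $L+1\mapsto L+1\mapsto-(L+1)$, where in the last chain we use $L+1\ge3$. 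The images agree, so $w_L\cdot s_1=s_{1^{\prime}}\cdot w_L$. (Alternatively, for $L\ge3$ this drops out of Proposition~\ref{exchange-dn}: taking $p=2$, $i_p=1$ there gives $w_L\cdot t_2=w_1\cdot t_2\cdot w_L$, and since $t_2=s_1$, $w_1=s_1 s_{1^{\prime}}$ and $s_1,s_{1^{\prime}}$ commute, the right-hand side equals $s_{1^{\prime}}\cdot w_L$; the case $L=2$ is then the one-line check above.)

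The second relation follows formally from the first together with $w_L^2=1$ (Lemma~\ref{order-w}, which applies since $1\le L\le n-1$): multiplying $w_L\cdot s_1=s_{1^{\prime}}\cdot w_L$ on the right by $w_L$ gives $w_L s_1 w_L=s_{1^{\prime}}$, and conjugating once more by the involution $w_L$ yields $w_L s_{1^{\prime}} w_L=w_L(w_L s_1 w_L)w_L=s_1$, i.e.\ $w_L\cdot s_{1^{\prime}}=s_1\cdot w_L$. There is no genuine obstacle here; the only point that needs care is consistent bookkeeping of the sign conventions and of the composition order, after which every step is a routine verification.
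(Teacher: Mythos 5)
Your proof is correct, but it takes a genuinely different route from the paper. The paper proves all three relations by rewriting words in the Coxeter generators: it writes out $w_L\cdot s_1$, $w_L\cdot s_{1^{\prime}}$, $w_L\cdot s_k$ explicitly via Definition \ref{def w} and pushes the extra generator leftward using the commutation and braid relations (including $s_{1^{\prime}}s_1=s_1s_{1^{\prime}}$ and $s_is_{i+1}s_i=s_{i+1}s_is_{i+1}$) until $s_{1^{\prime}}$, $s_1$, or $s_k$ pops out on the left. You instead verify the identities in the signed-permutation model: the commutation $w_L\cdot s_k=s_k\cdot w_L$ follows from disjointness of supports, the first relation from comparing images of the points $1,2,L+1$ (with the correct left-to-right composition convention), and the second relation formally from the first by conjugating with the involution $w_L$ (Lemma \ref{order-w}) — a step the paper instead re-derives by a separate word computation. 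Your argument is shorter and mechanical, but it leans on Remark \ref{perm-w} and on the faithfulness of the permutation realization of $D_n$ (which the paper does use elsewhere, e.g.\ in Proposition \ref{exchange-dn}, so this is consistent with the paper's toolkit); the paper's computation stays entirely inside the Coxeter presentation and produces the explicit rewritten words, which is closer in spirit to the normal-form manipulations used later in Section \ref{len-of-Dn}. Your parenthetical alternative via Proposition \ref{exchange-dn} with $p=2$, $i_p=1$ is also fine for $L\ge 3$, since $t_2=s_1$ and $w_1=s_1\cdot s_{1^{\prime}}$.
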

\begin{proof}
    \begin{itemize}
        \item Consider the first case, where $w_L \cdot s_1 = s_{1^{\prime}} \cdot w_L.$ $$ w_{L} \cdot s_{1} = s_{L} \cdots s_{2}\cdot s_{1}\cdot s_{1^{\prime}} \cdot s_{2}\cdots s_{L} \cdot s_{1}$$
          $$=  s_{L} \cdots s_{2}\cdot s_{1}\cdot s_{1^{\prime}} \cdot s_{2} \cdot s_{1}\cdot s_{3} \cdots s_{L}$$
          $$=  s_{L} \cdots s_{2}\cdot s_{1^{\prime}} \cdot s_{1} \cdot s_{2} \cdot s_{1}\cdot s_{3} \cdots s_{L}$$
          $$=  s_{L} \cdots s_{2}\cdot s_{1^{\prime}} \cdot s_{2} \cdot s_{1} \cdot s_{2} \cdots s_{L}$$
          $$=  s_{L} \cdots s_{3}\cdot s_{1^{\prime}}\cdot s_{2} \cdot s_{1^{\prime}} \cdot s_{1} \cdot s_{2} \cdots s_{L}$$
          $$= s_{1^{\prime}} \cdot  s_{L} \cdots  s_{2} \cdot s_{1^{\prime}} \cdot s_{1} \cdot s_{2} \cdots s_{L}$$
          $$= s_{1^{\prime}} \cdot w_{L} $$
          \item Consider the second case, where $w_L \cdot s_{1^{\prime}} = s_1 \cdot w_L.$ $$ w_{L} \cdot s_{1^{\prime}} = s_{L} \cdots s_{2}\cdot s_{1}\cdot s_{1^{\prime}} \cdot s_{2}\cdots s_{L} \cdot s_{1^{\prime}}$$
$$=  s_{L} \cdots s_{2}\cdot s_{1}\cdot s_{1^{\prime}} \cdot s_{2} \cdot s_{1^{\prime}} \cdots s_{L}$$
$$=  s_{L} \cdots s_{2}\cdot s_{1} \cdot s_{2} \cdot s_{1^{\prime}} \cdot s_{2} \cdots s_{L}$$
$$=  s_{L} \cdots s_{1}\cdot s_{2} \cdot s_{1} \cdot s_{1^{\prime}} \cdot s_{2} \cdots s_{L}$$
$$=  s_{1} \cdot s_{L}\cdots s_{2} \cdot  s_{1} \cdot s_{1^{\prime}} \cdot s_{2} \cdots s_{L}$$
$$= s_{1} \cdot w_{L} $$
         \item Consider the last case, where $w_L \cdot s_k = s_k \cdot w_L.$
         $$w_L \cdot s_k = s_L \cdots s_2 \cdot s_1 \cdot s_1^{\prime} \cdot s_2 \cdots s_{k-1} \cdot s_k \cdot s_{k+1}\cdots s_L \cdot s_k.$$
     $$=s_L \cdots s_k \cdots s_2\cdot s_1 \cdot s_1^{\prime} \cdot s_2 \cdots s_{k-1} \cdot s_k \cdot s_{k+1} \cdot s_k \cdots s_L.$$
      $$=s_L \cdots s_k \cdots s_2\cdot s_1 \cdot s_1^{\prime} \cdot s_2 \cdots s_{k-1} \cdot s_{k+1} \cdot s_{k} \cdot s_{k+1} \cdots s_L.$$
      $$=s_L \cdots s_{k+1}\cdot s_k \cdot s_{k+1} \cdot s_{k-1} \cdots s_2\cdot s_1 \cdot s_1^{\prime} \cdot s_2 \cdots s_{k-1} \cdot s_{k} \cdot s_{k+1} \cdots s_L.$$
      $$=s_L \cdots s_{k}\cdot s_{k+1} \cdot s_{k} \cdot s_{k-1} \cdots s_2\cdot s_1 \cdot s_1^{\prime} \cdot s_2 \cdots s_{k-1} \cdot s_{k} \cdot s_{k+1} \cdots s_L.$$
      $$=s_k \cdot s_L \cdots s_{k+1} \cdot s_{k} \cdot s_{k-1} \cdots s_2\cdot s_1 \cdot s_1^{\prime} \cdot s_2 \cdots s_{k-1} \cdot s_{k} \cdot s_{k+1} \cdots s_L.$$
      $$=s_{k} \cdot w_L.$$

    \end{itemize}
          
\end{proof}

\begin{lemma} \label{pi-j-r}
Let define  $\pi_{j,r}$ to be : $$\pi_{j,r} = s_{j} \cdots s_{j-r}$$
Then, $$w_{j} \cdot \pi_{j,r} = \pi_{j,r} \cdot w_{j-r-1}. $$ 

$$\ell(w_{j} \cdot \pi_{j,r}) = \ell( \pi_{j,r})+ 2\cdot (j-r-1) .$$

\end{lemma}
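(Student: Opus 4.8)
The plan is to prove the two assertions in sequence, using only the relations already established. For the identity $w_{j}\cdot\pi_{j,r}=\pi_{j,r}\cdot w_{j-r-1}$, I would argue by induction on $r$. The base case $r=0$ is exactly the first displayed relation of Lemma \ref{wl-sk} read from the right, or more precisely the statement $w_{j}\cdot s_{j}=s_{j}\cdot w_{j-1}$: this is the ``commutation past the top generator'' instance, which can be checked directly from $w_{j}=s_{j}\cdot s_{j-1}\cdots s_{1}\cdot s_{1^{\prime}}\cdot s_{2}\cdots s_{j}$ by cancelling the outer $s_{j}\cdot s_{j}$, leaving $s_{j}\cdot(s_{j-1}\cdots s_{1}\cdot s_{1^{\prime}}\cdot s_{2}\cdots s_{j-1})$, and then noticing that the parenthesized word is $w_{j-1}$. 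For the inductive step, assuming $w_{j}\cdot s_{j}\cdots s_{j-r+1}=s_{j}\cdots s_{j-r+1}\cdot w_{j-r}$, I would multiply both sides on the right by $s_{j-r}$ and apply the same ``top generator'' move to $w_{j-r}\cdot s_{j-r}=s_{j-r}\cdot w_{j-r-1}$, which is the $k=j-r$ case of the identity $w_{k}\cdot s_{k}=s_{k}\cdot w_{k-1}$ used in the base case. Since $\pi_{j,r}=s_{j}\cdots s_{j-r}$ this yields $w_{j}\cdot\pi_{j,r}=\pi_{j,r}\cdot w_{j-r-1}$, as required. One must keep track of the index ranges: the argument needs $j-r-1\geq 1$ for $w_{j-r-1}$ to be defined, which is the implicit hypothesis on the pair $(j,r)$.

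For the length formula, I would pass to the normal form of $D_{n}$ from Definition \ref{D_n-normal-form}. Using the identity just proved, $w_{j}\cdot\pi_{j,r}=\pi_{j,r}\cdot w_{j-r-1}=(s_{j}\cdot s_{j-1}\cdots s_{j-r})\cdot(s_{j-r-1}\cdots s_{1}\cdot s_{1^{\prime}}\cdot s_{2}\cdots s_{j-r-1})$. I claim this is precisely a normal form expression in the sense of Definition \ref{D_n-normal-form}: the factor $\mathrm{norm}_{j}$ is the descending block $s_{j}\cdots s_{j-r}$ of length $r+1$ (this is the ``$\prod_{r=0}^{y_{j}-1}s_{j-r}$'' shape with $y_{j}=r+1$), while the remaining word $s_{j-r-1}\cdots s_{1}\cdot s_{1^{\prime}}\cdot s_{2}\cdots s_{j-r-1}$ is exactly $w_{j-r-1}=\mathrm{norm}_{j-r-1}$ in the ``$\prod s_{u-r}\cdot s_{1^{\prime}}\cdot\prod s_{r}$'' shape with $y'_{j-r-1}=j-r-1$. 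Since every element of $D_{n}$ has a \emph{unique} normal form and the Coxeter length equals the number of Coxeter generators appearing in it, I get $\ell(w_{j}\cdot\pi_{j,r})=(r+1)+\ell(w_{j-r-1})$. By Lemma \ref{ell-w}, $\ell(w_{j-r-1})=2(j-r-1)$, and $\ell(\pi_{j,r})=r+1$ since $\pi_{j,r}$ is a reduced descending word of $r+1$ generators, so $\ell(w_{j}\cdot\pi_{j,r})=(r+1)+2(j-r-1)=\ell(\pi_{j,r})+2(j-r-1)$.

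The main obstacle is the verification that the displayed word $(s_{j}\cdots s_{j-r})\cdot w_{j-r-1}$ really is \emph{in} normal form rather than merely \emph{equal to} something in normal form — i.e., that no further reduction is possible. The cleanest way around this is to invoke the shape constraints of Definition \ref{D_n-normal-form} directly: a concatenation $\prod_{u=1}^{n-1}\mathrm{norm}_{u}(\pi)$ is automatically normal provided each $\mathrm{norm}_{u}$ has one of the two admissible shapes with indices in the stated ranges, and here only the two factors indexed $u=j$ and $u=j-r-1$ are nontrivial, both of admissible shape. A subtle point is the degenerate case $r=j-2$, where $w_{j-r-1}=w_{1}=s_{1}\cdot s_{1^{\prime}}$ and one should double-check the formula still reads $\ell=\,(j-1)+2$; and the even more degenerate possibility $j-r-1=0$, which I expect to be excluded by the standing hypothesis on $(j,r)$ inherited from the context in which $\pi_{j,r}$ is used. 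If one prefers to avoid the normal-form bookkeeping entirely, an alternative is a direct induction on $r$ for the length statement too, using at each step that left-multiplying $w_{j-r}\cdots$ by $s_{j-r}$ increases length by exactly $1$ (a descent/ascent check via the permutation action of Remark \ref{perm-w}), but the normal-form route is shorter.
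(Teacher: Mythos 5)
Your proposal is correct and follows essentially the same route as the paper: both establish the commutation identity by cancelling the ascending tail of $w_j$ against $\pi_{j,r}$ (your induction on $r$ via $w_k\cdot s_k=s_k\cdot w_{k-1}$ is the paper's one-shot cancellation performed one letter at a time), and both then read off the length from a normal-form presentation of $\pi_{j,r}\cdot w_{j-r-1}$. The one point worth tightening --- which the paper glosses over in exactly the same way --- is that the two nontrivial blocks appear in the order $u=j$ followed by $u=j-r-1$, not in the increasing order required by Definition \ref{D_n-normal-form}; the cleanest repair is to observe that the fully cancelled word is the single admissible block $s_j\cdots s_2\cdot s_1\cdot s_{1^{\prime}}\cdot s_2\cdots s_{j-r-1}$ (i.e., $\mathrm{norm}_j$ of the second shape with $y^{\prime}_j=j-r-1$), whose letter count is $(r+1)+2(j-r-1)$.
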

\begin{proof}
First, we recall $w_{j}$
$$w_{j}= s_{j} \cdot s_{j-1} \cdots s_{1}\cdot s_{1^{\prime}} \cdot s_{2} \cdots s_{j}$$

Then,
$$w_{j} \cdot \pi_{j,r} = s_{j} \cdots s_{1} \cdot s_{1^{\prime}} \cdots s_{j-r-1} \cdot s_{j-r} \cdots s_{j} \cdot s_{j} \cdots s_{j-r}.$$
$$= s_{j} \cdots s_{j-r} \cdot (s_{j-r-1} \cdots s_{1} \cdot s_{1^{\prime}} \cdots s_{j-r-1}).$$
Now, we look at $\pi_{j,r} \cdot w_{j-r-1}$
$$\pi_{j,r} \cdot w_{j-r-1} =  (s_{j} \cdots s_{j-r}) \cdot (s_{j-r-1} \cdots s_{1} \cdot s_{1^{\prime}} \cdots s_{j-r-1})$$ $$=norm(\pi_{j,r} \cdot w_{j-r-1})=norm(\pi_{j,r}\cdot norm( w_{j-r-1}).$$
Hence, we get:
$$\ell(w_{j} \cdot \pi_{j,r}) =  \ell( \pi_{j,r})+\ell( w_{j-r-1})=\ell( \pi_{j,r})+ 2\cdot (j-r-1) .$$
\end{proof}

Now, we develop an algorithm for the Coxeter length of an  element $\pi \in D_{n}$ such that $\pi^{\prime}$ is a standard $OGS$ elementary element by using its generalized standard $OGS$ presentation.

\begin{proposition}\label{len-pi}
  Let $\pi\in D_n$ such that $\pi^{\prime}= t_{k_{1}}^{i_{k_{1}}} \cdots t_{k_{m}}^{i_{k_{m}}}$ be a standard $OGS$ elementary element (i.e., $\sum_{j=1}^{m}i_{k_j}\leq k_1$) as it is defined in Definition \ref{elementary}. Assume,  the decomposition of $\pi$ in the form $\pi=\pi^{\bullet}\cdot \pi^{\circ}$ (as it is described in Theorem \ref{wt}) is as follow: $$\pi=w_L\cdot \pi^{\circ},$$ where $L$ is a positive integer, such that  $1\leq L\leq n-1$, and the element $w_L$ as it is defined in Definition \ref{def w}. Then the following holds:
 \begin{itemize}
     \item If  $\pi = w_{L} \cdot \pi^{\circ}$ and  $L< \sum_{j=1}^{m} i_{k_{j}}$\\
     Then, $$\ell(\pi)= 2L + \ell(\pi^{\prime}).$$
     \item If $\pi = w_{L} \cdot \pi^{\circ}$ and  $L= \sum_{j=1}^{m} i_{k_{j}}<k_1$\\
     Then,$$ \ell(\pi)=\ell(\pi^{\prime}).$$
     \item If  $\pi = w_{L} \cdot \pi^{\circ}$ and  $\sum_{j=1}^{m} i_{k_{j}}<L<k_1$\\
     Then,$$\ell(\pi)=2\cdot (L- \text{maj}(\pi^{\prime}))+ \ell(\pi^{\prime}).$$
     \item If  $\pi = w_{L} \cdot \pi^{\circ}$ and  $k_r\leq L<k_{r+1}$ for $1\leq r\leq m-1$\\
     Then $$\ell(\pi)=2\cdot (L- \sum_{j=r+1}^{m}i_{k_j})+ \ell(\pi^{\prime}).$$
     \item If  $\pi = w_{L} \cdot \pi^{\circ}$ and  $L\geq k_m$\\
     Then $$\ell(\pi)=2L + \ell(\pi^{\prime}).$$
 \end{itemize}
 \end{proposition}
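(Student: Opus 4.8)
The plan is to compute $\ell(\pi)$ for $\pi = w_L \cdot \pi^\circ$ by producing, in each of the five cases, an explicit presentation of $\pi$ in the normal form of Definition \ref{D_n-normal-form}, since the number of Coxeter generators appearing in such a normal form equals the Coxeter length. Recall from Theorem \ref{wt} (and the discussion following it) that $\pi^\circ$, viewed inside $S_n^\circ$, has the same normal form as $\pi' = t_{k_1}^{i_{k_1}}\cdots t_{k_m}^{i_{k_m}}$, so $\ell(\pi^\circ) = \ell(\pi')$, and we may freely use the explicit normal form of a standard $OGS$ elementary element of $S_n$ recorded as Equation \eqref{norm-standard} in the proof of Theorem \ref{s-nn-standard-ogs}, namely
$$\text{norm}(\pi') = \prod_{u=\rho_1}^{k_1-1}\prod_{r=0}^{\rho_1-1}s_{u-r}\cdot \prod_{u=k_1}^{k_2-1}\prod_{r=0}^{\rho_2-1}s_{u-r}\cdots \prod_{u=k_{m-1}}^{k_m-1}\prod_{r=0}^{\rho_m-1}s_{u-r},$$
with $\rho_j = \sum_{x=j}^m i_{k_x}$, so that $\rho_1 = \text{maj}(\pi') = \sum_{j=1}^m i_{k_j}$.

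First I would dispose of the two easy cases. When $L = \sum_{j=1}^m i_{k_j} < k_1$, Theorem \ref{s-nn-standard-ogs} (case 2) shows $w_L\cdot \pi^\circ = \text{norm}(\pi^\circ)$ outright — the $w_L$ is absorbed into the normal form of $\pi^\circ$ with no net change, since the normal form of $\pi^\circ$ already begins with the block $s_{\rho_1}\cdots s_1$ — giving $\ell(\pi) = \ell(\pi')$. When $L \geq k_m$, the presentation $w_L\cdot\pi^\circ = w_L\cdot\text{norm}(\pi^\circ)$ is already a valid normal form of $D_n$ (the $w_L$ block uses $s_{1'}$ and indices up to $L\geq k_m$, which sits to the left of everything in $\text{norm}(\pi^\circ)$ in the sense required by Definition \ref{D_n-normal-form}); hence $\ell(\pi) = \ell(w_L) + \ell(\pi') = 2L + \ell(\pi')$ by Lemma \ref{ell-w}.

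For the three intermediate cases the key tool is Lemma \ref{pi-j-r}, which says $w_j\cdot \pi_{j,r} = \pi_{j,r}\cdot w_{j-r-1}$ with $\ell(w_j\cdot\pi_{j,r}) = \ell(\pi_{j,r}) + 2(j-r-1)$, together with the commutation relations of Lemma \ref{wl-sk} that let us slide a $w_L$ rightward past the leading $s_k$'s of $\text{norm}(\pi^\circ)$ that have index $> L$, at no cost, until the index of $w$ matches the next descent block. Concretely: in the case $k_r \leq L < k_{r+1}$ I would write $\text{norm}(\pi^\circ)$ as a product of its descent blocks, observe that the blocks for $u \geq k_{r+1}$ all start at index $> L$ so $w_L$ commutes past them by Lemma \ref{wl-sk}, and then each time $w_L$ meets a block $\pi_{u,\rho-1} = s_u\cdots s_{u-\rho+1}$ whose top index drops to (or below) the current $w$-index, apply Lemma \ref{pi-j-r} to pass $w$ through it while decrementing its index by the block length; the total decrement accumulated is exactly $\sum_{j=r+1}^m i_{k_j}$, after which $w$ has index $k_r - \sum_{j=r+1}^m i_{k_j}\cdot(\dots)$ — more precisely $L - \sum_{j=r+1}^m i_{k_j}$ — and the remaining word is a valid normal form, so $\ell(\pi) = 2(L-\sum_{j=r+1}^m i_{k_j}) + \ell(\pi')$. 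The case $\sum i_{k_j} < L < k_1$ is the same computation with $r=0$ and all blocks participating, giving the decrement $\text{maj}(\pi')$; the case $L < \sum i_{k_j}$ is where $w_L$ never "catches up" to a block boundary — it commutes with $s_{1'}\leftrightarrow s_1$ only at the very end — so no decrement occurs and $\ell(\pi) = 2L + \ell(\pi')$.

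The main obstacle I anticipate is the bookkeeping in the intermediate cases: one must verify carefully that at each step the index $j$ of the current $w_j$ is large enough that Lemma \ref{pi-j-r} genuinely applies (i.e.\ $j-r-1 \geq 0$, equivalently the block length does not exceed the current index), and that after all the sliding the resulting word is literally in the normal form of Definition \ref{D_n-normal-form} so that its letter-count is the length — no further reductions being possible. The standard $OGS$ elementary hypothesis $\sum_{j=1}^m i_{k_j} \leq k_1$ is precisely what guarantees $\rho_1 \leq k_1$ and, inductively, that every block length $\rho_v$ is bounded by its starting index, so Lemma \ref{pi-j-r} never "overshoots"; making this monotonicity explicit is the crux of the argument. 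I would present the $k_r \leq L < k_{r+1}$ case in full detail and remark that the other two intermediate cases are the boundary specializations $r = 0$ (with $L$ between $\text{maj}$ and $k_1$) and "$r=0$, $L$ below $\text{maj}$" of the same slide.
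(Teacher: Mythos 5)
Your plan is essentially the paper's proof: both arguments produce an explicit normal form of $w_L\cdot\pi^{\circ}$ in each of the five cases, starting from the normal form of the elementary element $\pi^{\prime}$ and using Lemma \ref{wl-sk}, Lemma \ref{pi-j-r} and Lemma \ref{ell-w}; your five length formulas are all correct, and your observation that the elementary hypothesis $\sum_{j=1}^{m}i_{k_j}\le k_1$ is what keeps Lemma \ref{pi-j-r} from ``overshooting'' is exactly the right point. One correction to the mechanism you describe in the intermediate cases: $w_L$ sits on the \emph{left}, and the blocks $\mathrm{norm}_u$ of $\mathrm{norm}(\pi^{\circ})$ occur in increasing order of $u$ beginning at $u=\mathrm{maj}(\pi^{\prime})$, so what has to happen is that $w_L$ slides rightward past the blocks with top index $u<L$ --- all of whose letters have index at most $L-1$, which is precisely the range covered by Lemma \ref{wl-sk} (with the single $s_1$ of the first block turning into $s_{1^{\prime}}$ at no cost in length) --- and is then merged, by a \emph{single} application of Lemma \ref{pi-j-r}, with the unique block $\mathrm{norm}_L$ of top index exactly $L$ (of length $\sum_{j=r+1}^{m}i_{k_j}$ when $k_r\le L<k_{r+1}$), yielding the block $s_L\cdots s_2\cdot s_{1^{\prime}}\cdot s_1\cdots s_{L-\sum_{j=r+1}^{m}i_{k_j}}$; the blocks with $u>L$ are never touched. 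Your statement that $w_L$ commutes past the blocks of index greater than $L$ ``by Lemma \ref{wl-sk}'' is backwards --- that lemma only gives $w_L\cdot s_k=s_k\cdot w_L$ for $k\le L-1$, and $w_L$ genuinely fails to commute with $s_{L+1}$ --- and the ``accumulated decrement over several blocks'' picture is unnecessary: the entire decrement comes from that one application of Lemma \ref{pi-j-r}. With this repaired, the bookkeeping you flag as the main obstacle is exactly the computation the paper carries out.
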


 \begin{proof}

  Since $\pi^{\prime}$ is a standard $OGS$ elementary element (as it is defined in Definition \ref{elementary}), by \cite{S1} Theorem 28, $$norm(\pi^{\prime})=\prod_{u=\rho_1}^{k_1-1}\prod_{r=0}^{\rho_1-1}s_{u-r}\cdot \prod_{u=k_1}^{k_2-1}\prod_{r=0}^{\rho_2-1}s_{u-r}\cdot \prod_{u=k_2}^{k_3-1}\prod_{r=0}^{\rho_3-1}s_{u-r}\cdots \prod_{u=k_{m-1}}^{k_m-1}\prod_{r=0}^{\rho_m-1}s_{u-r},$$
 where, $\rho_{j}=\sum_{x=j}^{m}i_{k_{x}}$ for $1\leq j\leq m$.\\
    
     Notice, by Theorem \ref{wt}, the standard $OGS$ presentation of $\pi^{\circ}$ is the same tho the standard $OGS$ presentation of $\pi^{\prime}$, and $\pi^{\circ}$ and $\pi^{\prime}$ present the same element of $S_n$, by considering $\pi^{\circ}$ an element of $S_n$. Hence, $norm(\pi^{\circ})=norm(\pi^{\prime})$. Therefore, by considering the length of elements, we write $\ell(\pi^{\prime})$ instead of $\ell(\pi^{\circ})$.\\
     
     Notice, $norm(\pi^{\prime})$ has one of the  following form, which depends on either $maj(\pi^{\prime})<k_1$ or $maj(\pi^{\prime})=k_1$.
     \begin{itemize}
        \item If $maj(\pi^{\prime})<k_1$ then $$norm(\pi^{\prime})=(s_{maj(\pi^{\prime})}\cdot s_{maj(\pi^{\prime})-1}\cdots s_1)\cdot (s_{maj(\pi^{\prime})+1}\cdots)\cdots (s_{k_{m}-1}\cdots s_{k_{m}-i_{k_m}}).$$  
         \item If $maj(\pi^{\prime})=k_1$ then $$norm (\pi^{\prime})=(s_{k_1}\cdot s_{k_{1}-1}\cdots s_{k_{1}-\sum_{j=2}^{m}i_{k_j}})\cdot (s_{k_{1}+1}\cdots)\cdots  (s_{k_{m}-1}\cdots s_{k_{m}-i_{k_m}}).$$
         where $\sum_{j=2}^{m}i_{k_j}\geq 2$.
     \end{itemize}
    
    Then, we divide the proof into the 5 different cases of the value of $L$ compare to the value of $maj(\pi^{\prime})$ and $k_r$ for $1\leq r\leq n$.\\
    
    First consider the case $\pi = w_{L}\cdot \pi^{\circ} \quad  \text{where} \quad L<maj(\pi^{\prime})$: 
 $$w_L\cdot \pi^{\circ} = (s_{L}\cdot s_{L-1} \cdots s_2\cdot s_{1^{\prime}} \cdot s_{1} \cdots s_{L}) \cdot (s_{maj(\pi^{\prime})} \cdots)\cdots (s_{k_{m}-1}\cdots s_{k_{m}-i_{k_m}}) = norm(\pi).$$

  Therefore, in case $L < maj(\pi^{\prime})$ we conclude:
 $$\ell(\pi)= \ell(w_L) + \ell(\pi^{\prime})= 2L + \ell(\pi^{\prime}).$$
      
  Now, consider the case $\pi = w_{L}\cdot \pi^{\circ} \quad  \text{where} \quad L=maj(\pi^{\prime})<k_1.$
     Then, 
$$\pi= (s_{maj(\pi^{\prime})} \cdots s_2\cdot  s_{1^{\prime}} \cdot s_{1} \cdot s_2\cdots s_{maj(\pi^{\prime})}) \cdot (s_{maj(\pi^{\prime})} \cdots s_{1}) \cdot (s_{maj(\pi^{\prime})+1} \cdots  $$ $$ = (s_{maj(\pi^{\prime})} \cdots s_2\cdot  s_{1^{\prime}})\cdot (s_{maj(\pi^{\prime})+1} \cdots )\cdots (s_{k_{m}-1}\cdots s_{k_{m}-i_{k_m}})=norm(\pi).$$
Therefore, in case $L = maj(\pi^{\prime})<k_1$, we conclude that :
$$\ell(\pi)=\ell(\pi^{\prime}).$$

     Now, Consider the case $\pi = w_{L}\cdot \pi^{\prime} \quad  \text{where} \quad maj(\pi^{\circ}) < L < k_1$
     
 $$\pi = w_L\cdot \pi^{\circ} = (s_{L} \cdots s_{maj(\pi^{\prime})}\cdots  s_2\cdot s_{1^{\prime}} \cdot s_{1} \cdots s_{maj(\pi^{\prime})} \cdots s_{L}) \cdot( s_{maj(\pi^{\prime})} \cdots s_{1}) \cdots $$ $$\cdots  (s_{L-1}\cdots s_{L-maj(\pi^{\prime})}) \cdot (s_{L} \cdots s_{L-maj(\pi^{\prime})+1})\cdot (s_{L+1}\cdots $$
 By Lemma \ref{wl-sk}:
 $$w_{L} \cdot \pi^{\circ} = (s_{maj(\pi^{\prime})} \cdots s_{1^{\prime}}) \cdots (s_{L-1} \cdots s_{L-maj(\pi^{\prime})}) \cdot (s_{L} \cdots s_2\cdot s_{1^{\prime}} \cdot s_{1} \cdots s_{L-maj(\pi^{\prime})+1}\cdots  s_{L}) $$ $$ \cdot (s_{L} \cdots s_{L-maj(\pi^{\prime})+1})\cdot(s_{L+1}\cdots )\cdots(s_{k_{m}-1}\cdots s_{k_{m}-i_{k_m}}).$$ 
 By Lemma \ref{pi-j-r},  $$ w_L \cdot (s_{L} \cdots s_{L-maj(\pi^{\prime})+1})=(s_{L} \cdots s_{L-maj(\pi^{\prime})+1})\cdot w_{maj(\pi^{\prime})}.$$
 Hence, 
 $$w_{L} \cdot \pi^{\circ}=(s_{maj(\pi^{\prime})} \cdots s_{1^{\prime}}) \cdots (s_{L-1} \cdots s_{L-maj(\pi^{\prime})}) \cdot (s_{L} \cdots s_2\cdot s_{1^{\prime}} \cdot s_{1} \cdots s_{L-maj(\pi^{\prime})})\cdot (s_{L+1}\cdots )\cdots $$ $$\cdots(s_{k_{m}-1}\cdots s_{k_{m}-i_{k_m}})=norm(\pi).$$
  Therefore, in case $maj(\pi^{\prime})\leq L<k_{1}$ we conclude :
 $$\ell(\pi)=2\cdot (L- \text{maj}(\pi^{\prime}))+ \ell(\pi^{\prime}).$$

 Now, Consider the case $\pi = w_{L}\cdot \pi^{\circ} \quad  \text{where} \quad k_r \leq L < k_{r+1}$ \\ $ \text{for}\quad 1\leq r\leq m-1$
 
 $$\pi = w_L\cdot \pi^{\circ} = (s_{L} \cdots  s_2\cdot s_{1^{\prime}} \cdot s_{1} \cdots  s_{L}) \cdot( s_{maj(\pi^{\prime})} \cdots ) \cdots $$ $$\cdots  (s_{L-1}\cdots ) \cdot (s_{L} \cdots s_{L-\sum_{j=r+1}^m i_{k_j}+1})\cdot (s_{L+1}\cdots $$
 By Lemma \ref{wl-sk}:
 $$w_{L} \cdot \pi^{\circ} = (s_{maj(\pi^{\prime})} \cdots ) \cdots (s_{L-1} \cdots ) \cdot (s_{L} \cdots s_2\cdot s_{1^{\prime}} \cdot s_{1} \cdots  s_{L}) $$ $$ \cdot (s_{L} \cdots s_{L-\sum_{j=r+1}^m i_{k_j}+1})\cdot(s_{L+1}\cdots )\cdots(s_{k_{m}-1}\cdots s_{k_{m}-i_{k_m}}).$$ 
 By Lemma \ref{pi-j-r},  $$ w_L \cdot (s_{L} \cdots s_{L-\sum_{j=r+1}^m i_{k_j}+1})=(s_{L} \cdots s_{L-\sum_{j=r+1}^m i_{k_j}+1})\cdot w_{\sum_{j=r+1}^m i_{k_j}}.$$
 Hence, 
 $$=(s_{maj(\pi^{\prime})} \cdots ) \cdots (s_{L-1} \cdots ) \cdot (s_{L} \cdots s_2\cdot s_{1^{\prime}} \cdot s_{1} \cdots s_{L-\sum_{j=r+1}^m i_{k_j}})\cdot (s_{L+1}\cdots )\cdots $$ $$\cdots(s_{k_{m}-1}\cdots s_{k_{m}-i_{k_m}})=norm(\pi).$$
 Therefore, in case $k_r\leq L<k_{r+1}$ we conclude :
 $$\ell(\pi)=2\cdot (L-\sum_{j=r+1}^m i_{k_j} )+ \ell(\pi^{\prime}).$$
 
  Finally, consider the case $\pi = w_{L}\cdot \pi^{\circ} \quad  \text{where} \quad L\geq k_m$: 
 $$w_L\cdot \pi^{\circ} = (s_{L}\cdot s_{L-1} \cdots s_2\cdot s_{1^{\prime}} \cdot s_{1} \cdots s_{L}) \cdot (s_{maj(\pi^{\prime})} \cdots)\cdots (s_{k_{m}-1}\cdots s_{k_{m}-i_{k_m}}).$$
By Lemma \ref{wl-sk}:
$$w_L\cdot \pi^{\circ} = (s_{maj(\pi^{\prime})} \cdots)\cdots (s_{k_{m}-1}\cdots s_{k_{m}-i_{k_m}})\cdot(s_{L}\cdot s_{L-1} \cdots s_2\cdot s_{1^{\prime}} \cdot s_{1} \cdots s_{L}) .$$
  Therefore, in case $L \geq  k_m$ we conclude:
 $$\ell(\pi)= \ell(w_L) + \ell(\pi^{\prime})= 2L + \ell(\pi^{\prime}).$$
 
 \end{proof}

\begin{example}
 Consider $$\pi = w_{5}\cdot t_{7}^{2} \cdot t_{9}^{2} \cdot t_{12}^{3}$$ 
 $\bullet$ $L < \text{maj}(\pi^{\prime})$,\quad  $L=5<7=2+2+3=\text{maj}(\pi^{\prime})$.
 $$\pi = (s_5 \cdot s_4 \cdot s_3\cdot s_2\cdot s_1 \cdot s_1^{\prime} \cdot s_2\cdot s_3\cdot s_4\cdot s_5) \cdot (s_7 \cdot s_6 \cdot s_5 \cdot s_4 \cdot s_3)$$ $$ \cdot (s_8 \cdot s_7 \cdot s_6 \cdot s_5 \cdot s_4) \cdot (s_9 \cdot s_8 \cdot s_7) \cdot (s_{10} \cdot s_9 \cdot s_8) \cdot (s_{11} \cdot s_{10} \cdot s_9)=\text{norm}(\pi).$$
 Hence,
 $$\ell(\pi)= 2 \cdot 5 + 7\cdot 2 + 9\cdot 2 + 12 \cdot 3 - 7^{2} = 29.$$
 \end{example}
 \begin{example}
Consider $$\pi = w_{4} \cdot t_{5}^{2} \cdot t_{6}^{2}$$
 $\bullet$ $L = \text{maj}(\pi^{\prime})=4$.
 $$\pi = (s_4 \cdot s_3 \cdot s_2 \cdot  s_1^{\prime}\cdot s_1 \cdot s_2 \cdot s_3 \cdot s_4) \cdot (s_4 \cdot s_3 \cdot s_2 \cdot s_1)\cdot (s_5 \cdot s_4)$$
 $$ = (s_4 \cdot s_3 \cdot s_2 \cdot s_1^{\prime}) \cdot (s_5 \cdot s_4)=\text{norm}(\pi).$$
Hence, 
  $$\ell(\pi)= 5 \cdot 2 + 6\cdot 2 - 4^{2} =6 .$$
  \end{example}
  \begin{example}
  Consider $$\pi = w_{5} \cdot t_{7}^{2} \cdot t_{9}.$$
 $\bullet$ $\text{maj}(\pi^{\prime})<L<k_1$,\quad $\text{maj}(\pi^{\prime})=3<L=5<k_1=7$.
 $$\pi = (s_5 \cdot s_4 \cdots s_1 \cdot s_1^{\prime} \cdot s_2 \cdot s_3 \cdot s_4 \cdot s_5) \cdot (s_3 \cdot s_2 \cdot s_1)$$ $$  \cdot (s_4 \cdot s_3 \cdot s_2) \cdot (s_5 \cdot s_4 \cdot s_3) \cdot (s_6 \cdot s_5 \cdot s_4) \cdot s_7 \cdot s_8$$
 $$ =  (s_3 \cdot s_2 \cdot s_1) \cdot (s_4 \cdot s_3 \cdot s_2) \cdot (s_5 \cdot s_4 \cdot s_3\cdot s_2\cdot  s_1 \cdot s_1^{\prime} \cdot s_2 \cdot s_3 \cdot s_4 \cdot s_5) \cdot (s_5 \cdot s_4 \cdot s_3) \cdot (s_6 \cdot s_5 \cdot s_4) \cdot s_7 \cdot s_8$$
 $$ =  (s_3 \cdot s_2 \cdot s_1) \cdot (s_4 \cdot s_3 \cdot s_2) \cdot (s_5 \cdot s_4 \cdot s_3 \cdot s_2 \cdot s_1 \cdot s_1^{\prime} \cdot s_2)  \cdot (s_6 \cdot s_5 \cdot s_4) \cdot s_7 \cdot s_8=\text{norm}(\pi).$$
Hence,
 $$\ell(\pi)= 2 \cdot(5-3)+14 = 18 .$$
 \end{example}

Now, we consider the case of $\pi$, where $\pi^{\prime}$ is a standard OGS elementary element, but $\pi$ not necessarily of the form $w_{L}\cdot \pi^{\prime}$. In Theorem \ref{len-pi-wt} we consider the length of $\pi$ where $\pi$ is presented in the form $\pi=\pi^{\bullet}\cdot \pi^{\circ}$, as it is described in Theorem \ref{wt}, and in Theorem \ref{length-standard-general-dn} we consider the length of $\pi$ where $\pi$ is presented in the generalized standard $OGS$ presentation, as it is described in Theorem \ref{ogs-dn}.

\begin{theorem}\label{len-pi-wt}
Let $\pi\in D_n$ presented in the form $\pi=\pi^{\bullet}\cdot \pi^{\circ}$, as it is described in Theorem \ref{wt}. Assume 
$$\pi^{\bullet}=w_{L_1}\cdot w_{L_2}\cdots w_{L_u}, $$ for some positive integer $u$, where for $1\leq j\leq u$, ~$w_{L_j}$ as it is defined in Definition \ref{def w}.
Assume $\pi^{\circ}$ is a standard $OGS$ elementary element (as it is defined in Definition \ref{elementary}) by considering it as an element of $S_n$, such that the  standard $OGS$ presentation of $\pi^{\circ}$ as follow:  $$\pi^{\circ}=t_{k_1}^{i_{k_1}}\cdot t_{k_2}^{i_{k_2}}\cdots t_{k_m}^{i_{k_m}}.$$
Let $\pi^{\prime}$ be an element of $S_n$ as it is defined in Definition \ref{hom-dn-sn} (Notice, by Theorem \ref{wt}, the presentation of $\pi^{\prime}$  and $\pi^{\circ}$ have the same standard $OGS$ presentation, and present the same element of $S_n$, by considering $\pi^{\circ}$ as an element of $S_n$).
For every $1\leq j\leq u$, define $\varrho_{L_j}(\pi^{\prime})$ to be
$$\varrho_{L_j}(\pi^{\prime})=\begin{cases}
L_j & \text{if}\quad L_j<maj(\pi^{\prime})\quad \text{or}\quad Lj\geq k_m \\ \\
L_j-maj(\pi^{\prime}) & \text{if}\quad maj(\pi^{\prime})\leq L_j<k_1 \\ \\
L_j-\sum_{x=q}^m i_{k_x}\quad \text{for}\quad 2\leq q\leq m & \text{if}\quad k_{x-1}\leq L_j<k_x.
\end{cases}$$
Then $$\ell(\pi)=\ell(\pi^{\prime})+2\cdot\sum_{j=1}^{u} \varrho_{L_j}.$$

\end{theorem}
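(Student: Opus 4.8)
The plan is to prove the formula by induction on $u$, the number of $w$-factors in $\pi^{\bullet}$, using Proposition~\ref{len-pi} as the base case $u=1$ and the commutation relations from Proposition~\ref{exchange-dn} and Lemma~\ref{wl-sk} together with Lemma~\ref{pi-j-r} to push successive $w$-factors past $\pi^{\circ}$. First I would set up the base case: when $u=1$ we have $\pi=w_{L_1}\cdot\pi^{\circ}$, and Proposition~\ref{len-pi} already computes $\ell(\pi)$ in each of the five ranges of $L_1$ relative to $\operatorname{maj}(\pi^{\prime})$ and the $k_r$; one checks directly that in each of those five ranges the quantity $2L_1$, $0$, $2(L_1-\operatorname{maj}(\pi^{\prime}))$, $2(L_1-\sum_{j=r+1}^m i_{k_j})$, or $2L_1$ respectively equals exactly $2\varrho_{L_1}(\pi^{\prime})$, so the base case reads $\ell(\pi)=\ell(\pi^{\prime})+2\varrho_{L_1}$.

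For the inductive step, suppose the formula holds for products of $u-1$ elements $w_{L}$. Given $\pi=w_{L_1}\cdots w_{L_u}\cdot\pi^{\circ}$, I would isolate the rightmost factor and write $\pi=w_{L_1}\cdots w_{L_{u-1}}\cdot(w_{L_u}\cdot\pi^{\circ})$. The heart of the argument is the identity, extracted from the proof of Proposition~\ref{len-pi}, that for each range of $L_u$ one has $w_{L_u}\cdot\pi^{\circ}=\sigma_{L_u}\cdot\pi^{\circ}$ where $\sigma_{L_u}$ is a certain reduced word (either $w_{L_u}$ itself, or the empty word, or a product of the form $\pi_{j,r}$-type blocks times $w_{\varrho_{L_u}}$ — in all cases a reduced word of Coxeter length exactly $2\varrho_{L_u}(\pi^{\prime})$ commuting appropriately with the $s_j$'s with $j\le\operatorname{maj}(\pi^{\prime})$). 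Concretely one shows $w_{L_u}\cdot\pi^{\circ}$ has a normal form whose "new" letters, beyond $\operatorname{norm}(\pi^{\circ})$, constitute a reduced subword of length $2\varrho_{L_u}$ of the form $w_{\varrho_{L_u}}$ sitting to the \emph{left} of $\operatorname{norm}(\pi^{\circ})$ (when $\varrho_{L_u}>0$). Then $\pi=w_{L_1}\cdots w_{L_{u-1}}\cdot w_{\varrho_{L_u}}\cdot\pi^{\circ}$, and since $w_{\varrho_{L_u}}\cdot\pi^{\circ}$ is again of the form "(element of $Id^{\bullet}_n$)$\cdot$(standard $OGS$ elementary element of $S_n$ with the same $\operatorname{norm}$)", the induction hypothesis applied to the $u-1$ factors $w_{L_1},\dots,w_{L_{u-1}}$ acting on $w_{\varrho_{L_u}}\cdot\pi^{\circ}$ — or, more cleanly, re-applying the whole argument with $\pi^{\circ}$ replaced by $w_{\varrho_{L_u}}\cdot\pi^{\circ}$ — yields $\ell(\pi)=\ell(w_{\varrho_{L_u}}\cdot\pi^{\circ})+2\sum_{j=1}^{u-1}\varrho_{L_j}=\ell(\pi^{\prime})+2\varrho_{L_u}+2\sum_{j=1}^{u-1}\varrho_{L_j}$, which is the claim. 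One must also verify that the $w_{\varrho_{L_j}}$ all have distinct indices so that Lemma~\ref{ell-w} guarantees the lengths add with no cancellation; this follows from the strict monotonicity $L_{j}<L_{j+1}$ in the decomposition of $\pi^{\bullet}$ together with the fact that $\varrho$ is order-preserving on each of the relevant intervals.

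The main obstacle I anticipate is bookkeeping the normal form carefully enough to justify that moving $w_{L_u}$ through $\pi^{\circ}$ produces \emph{exactly} a block of length $2\varrho_{L_u}$ with \emph{no} interaction with the already-committed $w_{L_1},\dots,w_{L_{u-1}}$ on the left — in other words, that the reductions in Proposition~\ref{len-pi} localize to the $\pi^{\circ}$ part and the new $w_{\varrho_{L_u}}$ slides to the far left without further length change. This is exactly where Lemma~\ref{wl-sk} (the $w_L\cdot s_k=s_k\cdot w_L$ for $k\le L-1$, and the $s_1\leftrightarrow s_{1^{\prime}}$ swaps) and Lemma~\ref{pi-j-r} do the work, but one has to track which $s_j$'s survive in $\operatorname{norm}(\pi^{\circ})$ in each of the five ranges; the cleanest presentation is to prove a single auxiliary claim — that $w_{L}\cdot\pi^{\circ}=w_{\varrho_{L}(\pi^{\prime})}\cdot\pi^{\circ}$ as elements of $D_n$ for every admissible $L$, with $\ell(w_{L}\cdot\pi^{\circ})=\ell(\pi^{\circ})+2\varrho_{L}(\pi^{\prime})$ — and then the theorem is immediate by iterating this claim and invoking Lemma~\ref{ell-w} for the disjointness of indices.
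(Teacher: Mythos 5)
Your overall strategy---induction on $u$ with Proposition \ref{len-pi} supplying the single-$w_L$ case, and the observation that the five cases of that proposition are exactly $2\varrho_{L}(\pi^{\prime})$---is the same skeleton as the paper's proof. However, the pivot of your inductive step, the ``auxiliary claim'' that $w_{L}\cdot\pi^{\circ}=w_{\varrho_{L}(\pi^{\prime})}\cdot\pi^{\circ}$ as elements of $D_n$, is false. As sign-permutations, $w_{L}\cdot\pi^{\circ}$ negates the images of positions $1$ and $L+1$ while $w_{\varrho_{L}}\cdot\pi^{\circ}$ negates those of positions $1$ and $\varrho_{L}+1$, so the two elements differ whenever $\varrho_{L}\neq L$; the paper's own example $w_{4}\cdot t_{5}^{2}\cdot t_{6}^{2}$ (where $\varrho_{4}=0$) has normal form $(s_{4}\cdot s_{3}\cdot s_{2}\cdot s_{1^{\prime}})\cdot(s_{5}\cdot s_{4})$, which is not $t_{5}^{2}\cdot t_{6}^{2}$ even though the lengths coincide. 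What the proof of Proposition \ref{len-pi} actually shows is only the length identity: the $2\varrho_{L}$ extra letters are not a $w_{\varrho_{L}}$ block sitting to the left of $norm(\pi^{\circ})$ but are absorbed into the factor $norm_{L}$ of the normal form of Definition \ref{D_n-normal-form}, which changes from the purely descending type to the type containing $s_{1^{\prime}}$ followed by an ascending tail. Because the group identity fails, you cannot close the induction by handing $w_{L_1}\cdots w_{L_{u-1}}$ the element $w_{\varrho_{L_u}}\cdot\pi^{\circ}$ --- which in any case does not lie in $S^{\circ}_n$ and so is not covered by the statement you are inducting on, nor would the $\varrho_{L_j}$ for $j<u$ then be computed relative to the right element.

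The step your write-up is missing, and the one the paper supplies, is the following: order the indices $L_1<L_2<\cdots<L_u$, compute $norm(w_{L_u}\cdot\pi^{\circ})$ by Proposition \ref{len-pi}, and then argue that multiplying this \emph{already modified} normal form on the left by the smaller $w_{L_{u-1}}$ can be reduced by the very same algorithm (Lemmas \ref{wl-sk} and \ref{pi-j-r}), adding exactly $2\dot{\varrho}_{L_{u-1}}(\pi^{\prime})$ more letters, because $L_{u-1}<L_u$ keeps the new reduction disjoint from the block altered at stage $u$; one then iterates. Finally, your fallback appeal to Lemma \ref{ell-w} via distinctness of the $\varrho_{L_j}$ is also unsound: $\varrho$ is not globally injective across the case intervals (for instance $L_1<maj(\pi^{\prime})$ gives $\varrho_{L_1}=L_1$ while $maj(\pi^{\prime})\leq L_2<k_1$ gives $\varrho_{L_2}=L_2-maj(\pi^{\prime})$, and these can coincide), so even granting your reformulation the lengths would appear to cancel where the theorem says they must add.
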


\begin{proof}

Consider $\pi=\pi^{\bullet}\cdot \pi^{\circ}$, where  $$\pi^{\bullet}=w_{L_1}\cdot w_{L_2}\cdots w_{L_u},$$ for some positive integer $u$, where for $1\leq j\leq u$, ~$w_{L_j}$ as it is defined in Definition \ref{def w}. Since by Proposition \ref{exchange-dn}, $w_{L_p}\cdot w_{L_q}=w_{L_q}\cdot w_{L_p}$ for every $1\leq p, q\leq u$, we may assume $$L_1<L_2< \ldots < L_u.$$

Hence, by Definition \ref{def w}, the presentation of $\pi^{\bullet}$ by Coxeter generators

$$\pi^{\bullet}=w_{L_1}\cdot w_{L_2}\cdots w_{L_u}$$ $$=(s_{L_1}\cdot s_{L_1-1}\cdots s_{2}\cdot s_{1}\cdot s_{1^{\prime}}\cdot s_{2}\cdots s_{L_1})\cdot (s_{L_2}\cdot s_{L_2-1}\cdots s_{2}\cdot s_{1}\cdot s_{1^{\prime}}\cdot s_{2}\cdots s_{L_2})\cdots $$ $$\cdots  (s_{L_u}\cdot s_{L_u-1}\cdots s_{2}\cdot s_{1}\cdot s_{1^{\prime}}\cdot s_{2}\cdots s_{L_u}).$$

Assume $\pi^{\circ}$ is a standard $OGS$ elementary element, by considering it as an element of $S_n$, with the following standard $OGS$ presentation as it is described in Theorem \ref{canonical-sn}.
$$\pi^{\circ}=t_{k_1}^{i_{k_1}}\cdot t_{k_2}^{i_{k_2}}\cdots t_{k_m}^{i_{k_m}}.$$
Then, by \cite {S1}, Theorem 28:

$$norm(\pi^{\circ})=\prod_{u=\rho_1}^{k_1-1}\prod_{r=0}^{\rho_1-1}s_{u-r}\cdot \prod_{u=k_1}^{k_2-1}\prod_{r=0}^{\rho_2-1}s_{u-r}\cdot \prod_{u=k_2}^{k_3-1}\prod_{r=0}^{\rho_3-1}s_{u-r}\cdots \prod_{u=k_{m-1}}^{k_m-1}\prod_{r=0}^{\rho_m-1}s_{u-r},$$
 where, $\rho_{j}=\sum_{x=j}^{m}i_{k_{x}}$ for $1\leq j\leq m$.\\

Now, we consider $norm(\pi)=norm(\pi^{\bullet}\cdot \pi^{\circ})$ in the following way.

First, consider $norm(w_{L_u}\cdot \pi^{\circ})$. Since, by Theorem \ref{wt}, the standard $OGS$ presentation of $\pi^{\circ}$ is same to the standard $OGS$ presentation of $\pi^{\prime}$, we can consider $norm(w_{L_u}\cdot \pi^{\circ})$ as it is described in the proof of Proposition \ref{len-pi}. Hence, by Proposition \ref{len-pi}
$$\ell(w_{L_u}\cdot \pi^{\circ})=2\cdot \dot{\varrho}_{L_u}(\pi^{\prime})+\ell(\pi^{\prime}),$$
where,
for every $1\leq j\leq u$, 
$$\dot{\varrho}_{L_j}(\pi^{\prime})=\begin{cases}
L_j & \text{if}\quad L_j<maj(\pi^{\prime})\quad \text{or}\quad Lj\geq k_m \\ \\
L_j-maj(\pi^{\prime}) & \text{if}\quad maj(\pi^{\prime})\leq L_j<k_1 \\ \\
L_j-\sum_{x=q}^m i_{k_x}\quad \text{for}\quad 2\leq q\leq m & \text{if}\quad k_{x-1}\leq L_j<k_x.
\end{cases}$$
Now, we consider $w_{L_{u-1}}\cdot norm(w_{L_u}\cdot \pi^{\circ})$.
Since, $L_{u-1}<L_u$, for finding $norm(w_{L_{u-1}}\cdot norm(w_{L_u}\cdot \pi^{\circ}))$, we can apply the same algorithm as it is described in Proposition \ref{len-pi} for finding $norm(w_L\cdot \pi^{\prime})$ and $\ell(w_L\cdot \pi^{\prime})$, and we get
$$\ell(w_{L_{u-1}}\cdot (w_{L_u}\cdot \pi^{\circ})=2\cdot \dot{\varrho}_{L_{u-1}}(\pi^{\prime})+\ell(w_{L_u}\cdot \pi^{\circ})=2\cdot \big(\dot{\varrho}_{L_{u-1}}(\pi^{\prime})+\dot{\varrho}_{L_{u}}(\pi^{\prime})\big)+\ell(\pi^{\prime}).$$

Now, assume by induction on $j$, 
$$\ell(w_{L_{u-j}}\cdot w_{L_{u-(j-1)}}\cdots w_{L_{u}}\cdot \pi^{\circ})=2\cdot\sum_{x=0}^j \dot{\varrho}_{L_{u-x}}(\pi^{\prime})+\ell(\pi^{\prime}).$$

Now, we consider $w_{L_{u-(j+1)}}\cdot norm(w_{L_{u-j}}\cdot w_{L_{u-(j-1)}}\cdots w_{L_{u}}\cdot \pi^{\circ})$.
Since, $L_{u-(j+1)}<L_{u-j}$, for finding  $$norm(w_{L_{u-(j+1)}}\cdot norm(w_{L_{u-j}}\cdot w_{L_{u-(j-1)}}\cdots w_{L_{u}}\cdot \pi^{\circ})),$$ we can apply the same algorithm as it is described in Proposition \ref{len-pi} for finding $norm(w_L\cdot \pi^{\circ})$ and $\ell(w_L\cdot \pi^{\circ})$, and we get
$$\ell(w_{L_{u-(j+1)}}\cdot (w_{L_{u-j}}\cdot w_{L_{u-(j-1)}}\cdots w_{L_{u}}\cdot \pi^{\circ}))$$ $$=2\cdot \dot{\varrho}_{L_{u-(j+1)}}(\pi^{\prime})+\ell((w_{L_{u-j}}\cdot w_{L_{u-(j-1)}}\cdots w_{L_{u}}\cdot \pi^{\circ})=2\cdot \sum_{x=0}^{j+1} \dot{\varrho}_{L_{u-x}}(\pi^{\prime})+\ell(\pi^{\circ}).$$

Hence, the induction assumption holds for every $0\leq u-1$, and we have:

$$\ell(\pi)=\ell(w_{L_1}\cdot w_{L_2}\cdots w_{L_u}\cdot \pi^{\circ})=\ell(\pi^{\circ})+2\cdot \sum_{j=1}^{u} \dot{\varrho}_{L_j}(\pi^{\prime}).$$

\end{proof}

\begin{theorem}\label{length-standard-general-dn}
Let $\pi$ be an element of $D_n$, such that $\pi^{\prime}$ is a standard $OGS$ elementary element (defined in Definition \ref{elementary}). Consider the presentation of $\pi$ as it is presented in Corollary \ref{twtw}, i.e.,$\pi$ is presented by the generalized standard $OGS$ as follow:
  $$\pi= \pi_{1}^{\circ}\cdot \pi_{1}^{\bullet}\cdot \pi_{2}^{\circ}\cdot \pi_{2}^{\bullet}\cdots \pi_{\mu-1}^{\circ}\cdot \pi_{\mu-1}^{\bullet}\cdot \pi_{\mu}^{\circ} $$ $$= \pi_{1}^{\circ}\cdot w_{L_{1_1}} \cdots w_{L_{1_{\nu_{1}}}} \cdot t_{k_{r_{1}+1}}^{i_{k_{r_{1}+1}}} \cdots t_{k_{r_{2}}}^{i_{k_{r_{2}}}}\cdot w_{L_{2_1}} \cdots w_{L_{2_{\nu_{2}}}} \cdot t_{k_{r_{2}+1}}^{i_{k_{r_{2}}+1}} \cdots $$ $$\cdots  t_{k_{r_{\mu-1}}}^{i_{k_{r_{\mu-1}}}} \cdot  w_{L_{{\mu-1}_1}} \cdots w_{L_{{\mu-1}_{\nu_{\mu-1}}}} \cdot \pi_{\mu}^{\circ},$$

where, 
$$\text{either}\quad \pi_{1}^{\circ}=t_{k_{1}}^{i_{k_{1}}} \cdots t_{k_{r_{1}}}^{i_{k_{r_{1}}}}\quad \text{or}\quad \pi_{1}^{\circ}=1,$$

and, 
$$\text{either}\quad  \pi_{\mu}^{\circ}=t_{k_{r_{\mu-1}+1}}^{i_{k_{r_{\mu-1}+1}}} \cdots t_{k_{r_{\mu}}}^{i_{k_{r_{\mu}}}}\quad  \text{or}\quad \pi_{\mu}^{\circ}=1.$$

Then,
$$\ell(\pi) =\begin{cases} \sum_{u=1}^{\mu-1} \big(maj_{u}(\pi)\cdot  \big((-1)^{\nu_{u}}+1\big)+2\cdot\sum_{j=1}^{\nu_{u}}\varrho_{L_{u_j}}(\pi)\big) + \ell(\pi^{\prime}) & \text{if}\quad \pi_{\mu}^{\circ}\neq 1 \\ \\
\sum_{u=1}^{\mu-2} \big(maj_{u}(\pi)\cdot \big((-1)^{\nu_{u}}+1\big)+2\cdot\sum_{j=1}^{\nu_{u}}\varrho_{L_{u_j}}(\pi)\big) + \ell(\pi^{\prime}) & \text{if}\quad \pi_{\mu}^{\circ} = 1.
\end{cases}$$
where, by Definition \ref{maj-alpha},

$$maj_{u}(\pi) =  \sum_{j=1}^{r_{u}} i_{k_{j}}\quad \rho_{u}(\pi)=\sum_{j=r_{u}+1}^{r_{\mu}} i_{k_{j}},$$
 $$\varrho_{L_{u_j}}(\pi^{\circ})=\begin{cases}
  L_{u_j}-\rho_{u}(\pi) & \text{if}\quad L_{u_j}\geq maj(\pi^{\prime}) \\
  L_{u_j} & \text{if}\quad L_{u_j} < maj(\pi^{\prime})
  \end{cases}$$

\end{theorem}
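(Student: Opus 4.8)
The plan is to deduce the formula by feeding the two structural results already proved into one another: Theorem \ref{wt}, which rewrites $\pi$ as $\pi^{\bullet}\cdot\pi^{\circ}$ with $\pi^{\bullet}\in Id^{\bullet}_n$ and $\pi^{\circ}\in S^{\circ}_n$, and Theorem \ref{len-pi-wt}, which computes the Coxeter length of exactly such a product. First I would invoke Theorem \ref{wt} to obtain
$$\pi=\pi^{\bullet}\cdot\pi^{\circ},\qquad \pi^{\circ}=\prod_{j=1}^{r_{\mu}}t_{k_j}^{i_{k_j}},$$
where $\pi^{\circ}$, viewed in $S_n$, has the same standard $OGS$ presentation as $\pi^{\prime}$ — so $\pi^{\circ}$ is again a standard $OGS$ elementary element and $\ell(\pi^{\circ})=\ell(\pi^{\prime})$ — and where $\pi^{\bullet}$ is the explicit product of all the generators $w_{L_{u_j}}$ ($1\le u\le\mu-1$, $1\le j\le\nu_u$) together with, for each block $u$ with $maj_{u}(\pi)<k_1$, a copy of $w_{maj_{u}(\pi)}$ raised to the parity of $\nu_u$.

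With this decomposition, Theorem \ref{len-pi-wt} applies with $\pi^{\circ}$ in the role of its standard $OGS$ elementary factor: writing $\pi^{\bullet}=w_{M_1}\cdots w_{M_s}$ for the (sorted) list of $w$-indices of $\pi^{\bullet}$, it gives $\ell(\pi)=\ell(\pi^{\prime})+2\sum_{i=1}^{s}\varrho_{M_i}(\pi^{\prime})$. I would then split this sum according to whether $M_i$ is one of the $L_{u_j}$ or one of the extra indices $maj_{u}(\pi)$. For the former, the constraints on the blocks in Corollary \ref{twtw} put $L_{u_j}$ in the interval $[k_{r_u},k_{r_u+1})$, so the case-value of $\varrho_{L_{u_j}}(\pi^{\prime})$ produced by Theorem \ref{len-pi-wt} — which in the generic range subtracts $\sum_{y\ge r_u+1} i_{k_y}$ from $L_{u_j}$ — coincides with the quantity $\varrho_{L_{u_j}}(\pi)$ defined through $\rho_{u}(\pi)$ in the statement, and degenerates correctly at the boundaries $r_u=0$ and $r_u=m$. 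For the latter, whenever $\pi_\mu^\circ\neq 1$ every block $u\le\mu-1$ is followed by at least one $t$-factor, so $maj_{u}(\pi)<maj(\pi^{\prime})$ and hence $\varrho_{maj_{u}(\pi)}(\pi^{\prime})=maj_{u}(\pi)$; summing over the blocks for which $w_{maj_u(\pi)}$ actually survives in $\pi^{\bullet}$ yields precisely the terms $maj_{u}(\pi)\cdot\bigl((-1)^{\nu_u}+1\bigr)$.

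Finally I would settle the boundary behaviour. The case $\pi_1^{\circ}=1$ forces $r_1=0$ and contributes no $maj_1$ term (there $maj_1(\pi)=0$), while the case $\pi_\mu^{\circ}=1$ alters the range of the outer sum because the last $\bullet$-block is not followed by a $\circ$-block; one tracks this through Theorem \ref{wt} to see the sum truncates at $\mu-2$. I would also observe that the $w$-indices occurring in $\pi^{\bullet}$ are pairwise distinct — the $L_{u_j}$ increase strictly within and across blocks, the $maj_u(\pi)$ increase strictly in $u$, and every $maj_u(\pi)$ is strictly below every $L_{v_j}$ with $r_v\ge 1$ — so no accidental cancellation $w_{M}^{2}=1$ occurs and Theorem \ref{len-pi-wt} may be used verbatim.

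The step I expect to be the main obstacle is the middle one: the parity bookkeeping. Each $w_{maj_u(\pi)}$ enters $\pi^{\bullet}$ only with an exponent equal to the parity of $\nu_u$, and this must be aligned carefully with the coefficient $(-1)^{\nu_u}+1$ in the target formula, while simultaneously keeping the side condition $maj_u(\pi)<k_1$ and the "completeness'' condition $maj_u(\pi)<maj(\pi^{\prime})$ under control. An alternative to using Theorem \ref{len-pi-wt} as a black box is a direct induction on $\mu$: peel off $\pi_\mu^\circ$ and $\pi_{\mu-1}^\bullet$, use Lemma \ref{tw-standard} and Lemma \ref{pi-j-r} to carry the $w$'s of $\pi_{\mu-1}^\bullet$ leftward past $\prod_{j\le\mu-1}\pi_j^\circ$, and apply Proposition \ref{len-pi} to the shorter word; the inductive step is then exactly the computation behind Theorem \ref{len-pi-wt}.
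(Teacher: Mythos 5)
Your proposal is essentially the paper's own proof: it decomposes $\pi=\pi^{\bullet}\cdot\pi^{\circ}$ via Theorem \ref{wt}, applies Theorem \ref{len-pi-wt} to that product, and then identifies the values $\dot{\varrho}_{L_{u_j}}(\pi^{\prime})$ and $\dot{\varrho}_{maj_{u}(\pi)}(\pi^{\prime})$ with $\varrho_{L_{u_j}}(\pi)$ and $maj_{u}(\pi)$ exactly as the paper does in its Equations \ref{L-j-rho-dot} and \ref{maj-rho-dot}, including the same case split according to whether $\pi_{\mu}^{\circ}=1$ and the same treatment of $\pi_{1}^{\circ}=1$. The two delicate points you single out --- the parity bookkeeping for $w_{maj_{u}(\pi)}$ (which, as in the paper's Example \ref{standsrd-length-example}, contributes $2\cdot maj_{u}(\pi)$ precisely when $\nu_{u}$ is odd) and the pairwise distinctness of the $w$-indices in $\pi^{\bullet}$ when $\pi_{1}^{\circ}=1$ --- are handled no more explicitly in the paper's proof than in your sketch, so your argument matches theirs in both substance and level of detail.
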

\begin{proof}
Consider the presentation of $\pi$ as it is presented in Corollary \ref{twtw}. 
$$\pi= \pi_{1}^{\circ}\cdot \pi_{1}^{\bullet}\cdot \pi_{2}^{\circ}\cdot \pi_{2}^{\bullet}\cdots \pi_{\mu-1}^{\circ}\cdot \pi_{\mu-1}^{\bullet}\cdot \pi_{\mu}^{\circ} $$ $$= \pi_{1}^{\circ} \cdot w_{L_{1_1}} \cdots w_{L_{1_{\nu_{1}}}} \cdot t_{k_{r_{1}+1}}^{i_{k_{r_{1}+1}}} \cdots t_{k_{r_{2}}}^{i_{k_{r_{2}}}}\cdot w_{L_{2_1}} \cdots w_{L_{2_{\nu_{2}}}} \cdot t_{k_{r_{2}+1}}^{i_{k_{r_{2}}+1}} \cdots $$ $$\cdots  t_{k_{r_{\mu-1}}}^{i_{k_{r_{\mu-1}}}} \cdot  w_{L_{{\mu-1}_1}} \cdots w_{L_{{\mu-1}_{\nu_{\mu-1}}}} \cdot \pi_{\mu}^{\circ}.$$

where, 
$$\text{either}\quad \pi_{1}^{\circ}=t_{k_{1}}^{i_{k_{1}}} \cdots t_{k_{r_{1}}}^{i_{k_{r_{1}}}}\quad \text{or}\quad \pi_{1}^{\circ}=1,$$

and
$$\text{either}\quad  \pi_{\mu}^{\circ}=t_{k_{r_{\mu-1}+1}}^{i_{k_{r_{\mu-1}+1}}} \cdots t_{k_{r_{\mu}}}^{i_{k_{r_{\mu}}}}\quad  \text{or}\quad \pi_{\mu}^{\circ}=1.$$

Now, we recall the definitions of  $maj_{\alpha}(\pi)$, $\rho_{\alpha}(\pi)$ and  $\varrho_{L_{\alpha_j}}(\pi)$ for every $1\leq \alpha\leq \mu$ and $1\leq j\leq \nu_{\alpha}$, as it is defined in Definition \ref{maj-alpha} :
$$maj_{\alpha}(\pi) =  \sum_{j=1}^{r_{\alpha}} i_{k_{j}}\quad\quad \rho_{\alpha}(\pi) = maj(\pi^{\prime})-maj_{\alpha}(\pi)= \sum_{j=r_{\alpha}+1}^{r_{\mu}} i_{k_{j}}.$$

$$\varrho_{L_{u_j}}(\pi)=\begin{cases}
  L_{u_j}-\rho_{u}(\pi) & \text{if}\quad L_{u_j}\geq maj(\pi^{\prime}) \\
  L_{u_j} & \text{if}\quad L_{u_j} <  maj(\pi^{\prime})
  \end{cases}$$

By Theorem \ref{wt}, 

$$\pi =\prod_{j=1}^{\mu-1} w_{maj_{j}(\pi) ~|~ maj_{j}(\pi)<k_1}^{-0.5 \cdot (-1)^{\nu_{j}}+0.5}\cdot w_{L_{1_1}} \cdots w_{L_{1_{\nu_1}}} \cdot w_{L_{2_1}} \cdots w_{L_{2_{\nu_2}}}  \cdots w_{L_{{\mu-1}_1}} \cdots w_{L_{{\mu-1}_{\nu_{\mu-1}}}}  \cdot \pi^{\circ}.$$

First, notice by Theorem \ref{wt},
$$\pi^{\circ}=\pi_{1}^{\circ}\cdot \pi_{2}^{\circ}\cdots \pi_{\mu}^{\circ}
=\begin{cases}
t_{k_1}^{i_{k_1}}\cdot t_{k_2}^{i_{k_2}}\cdots t_{k_{r_{\mu-1}}}^{i_{k_{r_{\mu-1}}}} & \text{if}\quad \pi_{\mu}^{\circ}=1\\ 
t_{k_1}^{i_{k_1}}\cdot t_{k_2}^{i_{k_2}}\cdots t_{k_{r_{\mu-1}}}^{i_{k_{r_{\mu-1}}}}\cdot t_{k_{r_{\mu-1}+1}}^{i_{k_{r_{\mu-1}+1}}}\cdots t_{k_{r_{\mu}}}^{i_{k_{r_{\mu}}}}   & \text{if}\quad \pi_{\mu}^{\circ}\neq 1.
\end{cases}$$

Now, we consider  $L_{\alpha_j}$ for $1\leq \alpha\leq \mu-1$ and  $1\leq j\leq \nu_{\alpha}$.\\

By Corollary \ref{twtw}, 
\begin{equation}\label{L-j-k-r}
\begin{cases}
L_{\alpha_j}<k_1  & \text{if}\quad  \alpha=1\quad \text{and}\quad \pi_{1}^{\circ}=1\\
L_{\alpha_j}\geq k_{m-1}  & \text{if}\quad  \alpha=\mu-1\quad \text{and}\quad \pi_{\mu}^{\circ}=1\\
k_{r_{\alpha}}\leq L_{\alpha_j} < k_{r_{\alpha}+1} & \text{otherwise} 
\end{cases}
\end{equation}

Notice, by Theorem \ref{wt}, the standard $OGS$ presentation of $\pi^{\circ}$ by considering it as an element of $S_n$ is  same to the standard $OGS$ presentation of $\pi^{\prime}$. Hence, when we calculate length of $\pi$ or normal form of $\pi$ we often write $\pi^{\prime}$ instead of $\pi^{\circ}$.\\ 

Now, consider $\dot{\varrho}_{L_{\alpha_j}}(\pi^{\prime})$ for $1\leq \alpha\leq \mu-1$ and  $1\leq j\leq \nu_{\alpha}$, as it is defined in  Theorem \ref{len-pi-wt}.\\

 $$ \dot{\varrho}_{L_{\alpha_j}}(\pi^{\prime})=\begin{cases}
L_{\alpha_j} & \text{if}\quad L_{\alpha_j}<maj(\pi^{\prime})\quad \text{or}\quad L_{\alpha_j}\geq k_{\mu} \\ \\
L_{\alpha_j}-maj(\pi^{\prime}) & \text{if}\quad maj(\pi^{\prime})\leq L_{\alpha_j}<k_1 \\ \\
L_{\alpha_j}-\sum_{x=q}^m i_{k_x}\quad \text{for}\quad 2\leq q\leq \mu & \text{if}\quad k_{x-1}\leq L_{\alpha_j}<k_x.
\end{cases}$$

Hence, by Equation \ref{L-j-k-r}, and by the definition of $\varrho_{\alpha}(\pi)$ for $1\leq \alpha\leq \mu-1$, the following are satisfied: \\

For $1\leq \alpha\leq \mu-1$, and $1\leq j\leq \nu_{\alpha}$, 

\begin{equation}\label{L-j-rho-dot}
 \dot{\varrho}_{L_{\alpha_j}}(\pi^{\prime})=\varrho_{\alpha}(\pi).   
\end{equation}

Now, we consider $maj_{\alpha}(\pi)$ for $1\leq \alpha\leq \mu-1$.\\

Notice, by the definition of $maj_{\alpha}(\pi)$ for $1\leq \alpha\leq \mu-1$, the following holds:
$$maj_{\alpha}(\pi)=\prod_{j=1}^{\alpha}maj(\pi_j^{\circ})\leq \prod_{j=1}^{\mu}maj(\pi_j^{\circ})=maj(\pi^{\prime}),$$
by considering $\pi^{\circ}$ as an element of $S_n$.

where,
$$maj_{\alpha}(\pi)= maj(\pi^{\prime}) $$
if and only if 
$$\alpha=\mu-1\quad \text{and}\quad \pi_{\mu}^{\circ}=1.$$

Therefore, in case:
$$\pi_{\mu}^{\circ}\neq 1\quad \text{or}\quad 1\leq\alpha\leq \mu-2$$
we have $$maj_{\alpha}(\pi)< maj(\pi^{\prime}).$$

Hence, by the definition of $\dot{\varrho}_{maj_{\alpha}(\pi)}(\pi^{\prime})$, for every $\leq \alpha\leq \mu-1$, such that $maj_{\alpha}(\pi)<k_1$, the following holds:

\begin{equation}\label{maj-rho-dot}
\dot{\varrho}_{maj_{\alpha}(\pi)}(\pi^{\prime})=\begin{cases}
maj_{\alpha}(\pi) & \text{if}\quad  \pi_{\mu}^{\circ}\neq 1\quad \text{or}\quad 1\leq\alpha\leq \mu-2 \\
0 & \text{if}\quad \alpha=\mu-1\quad \text{and}\quad \pi_{\mu}^{\circ}=1.
\end{cases}    
\end{equation}

Hence, by Theorem \ref{len-pi-wt}, by using Equations \ref{L-j-rho-dot} and  \ref{maj-rho-dot} we get the length formula:

$$\ell(\pi) =$$ $$ =\ell(\prod_{j=1}^{\mu-1} w_{maj_{j}(\pi) ~|~ maj_{j}(\pi)<k_1}^{-0.5 \cdot (-1)^{\nu_{j}}+0.5}\cdot w_{L_{1_1}} \cdots w_{L_{1_{\nu_1}}} \cdot w_{L_{2_1}} \cdots w_{L_{2_{\nu_2}}}  \cdots w_{L_{{\mu-1}_1}} \cdots w_{L_{{\mu-1}_{\nu_{\mu-1}}}}  \cdot \pi^{\circ}) $$   $$=\begin{cases} \sum_{u=1}^{\mu-1} \big(maj_{u}(\pi)\cdot  \big((-1)^{\nu_{u}}+1\big)+2\cdot\sum_{j=1}^{\nu_{u}}\varrho_{L_{u_j}}(\pi)\big) + \ell(\pi^{\prime}) & \text{if}\quad \pi_{\mu}^{\circ}\neq 1 \\ \\
\sum_{u=1}^{\mu-2} \big(maj_{u}(\pi)\cdot \big((-1)^{\nu_{u}}+1\big)+2\cdot\sum_{j=1}^{\nu_{u}}\varrho_{L_{u_j}}(\pi)\big) + \ell(\pi^{\prime}) & \text{if}\quad \pi_{\mu}^{\circ} = 1.
\end{cases}$$

\end{proof}
\begin{example}\label{standsrd-length-example}
Consider
$$\pi = t_{10}^{4} \cdot w_{11} \cdot t_{12}^{2}\cdot w_{13} \cdot w_{14} \cdot w_{15} \cdot t_{16}^{3} \cdot t_{17}.$$

Notice,
$$\pi=\pi_1^{\circ}\cdot\pi_1^{\bullet}\cdot \pi_2^{\circ}\cdot\pi_2^{\bullet}\cdot\pi_3^{\circ},$$
where,
$$\pi_1^{\circ}=t_{10}^{4}\quad \pi_1^{\bullet}=w_{11}\quad \pi_2^{\circ}=t_{12}^{2}\quad \pi_2^{\bullet}= w_{13} \cdot w_{14} \cdot w_{15}\quad \pi_3^{\circ}=t_{16}^{3} \cdot t_{17}. $$

$$maj_1(\pi)=maj(\pi_1^{\circ})=4\quad maj_2(\pi)=maj(\pi_1^{\circ}\cdot\pi_2^{\circ})=4+2=6.$$
$$maj(\pi^{\prime})= maj(\pi_1^{\circ}\cdot\pi_2^{\circ} \cdot \pi_3^{\circ} )= 4+2+4=10.$$

$$\rho_{1}(\pi) = maj(\pi^{\prime})-maj_{1}(\pi)=10 - 4=6 \quad \rho_{2}(\pi) = maj(\pi^{\prime})-maj_{2}(\pi)=10 - 6=4. $$

Hence, by Theorem \ref{length-standard-general-dn}
$$\ell(\pi) = 2 \cdot maj_1(\pi) + 2 \cdot maj_2(\pi) + 2\cdot(11-\rho_1(\pi)) + 2\cdot (13-\rho_2(\pi)) + 2\cdot (14-\rho_2(\pi)) + 2\cdot$$ $$(15-\rho_2(\pi)) + 10 \cdot 4 + 12 \cdot 2 + 16 \cdot 3 + 17 -10^{2}$$ $$=2\cdot 4 + 2\cdot 6 + 2\cdot(11-6)+2\cdot(13-4)+2\cdot(14-4)+2\cdot(15-4)+$$ $$+10\cdot 4+12\cdot 2+16\cdot 3+17-10^2=90+29=119.$$
\\

Now, consider $\ell(\pi)$ by looking at the presentation of $\pi$ in terms of Coxeter generators:\\
First, notice by Theorem \ref{wt}

$$\pi= w_{4}\cdot w_{6} \cdot w_{11} \cdot w_{13} \cdot w_{14} \cdot w_{15} \cdot t_{10}^{4} \cdot t_{12}^{2}\cdot t_{16}^{3} \cdot t_{17}.$$
Then, the presentation of $\pi$ in terms of Coxeter generators as follow:

$$\pi = (s_{4}\cdot s_{3}\cdot s_{2}\cdot s_{1}\cdot s_{1^{\prime}}\cdot s_{2}\cdot s_3\cdot s_{4})\cdot (s_{6}\cdot s_{5}\cdots s_{1}\cdot s_{1^{\prime}}\cdot s_{2}\cdots s_{6})\cdot (s_{11}\cdot s_{10}\cdots s_{1}\cdot s_{1^{\prime}}\cdot s_{2}\cdots  s_{11}) $$ $$\cdot (s_{13}\cdot s_{12}\cdots s_{1}\cdot s_{1^{\prime}}\cdot
s_{2}\cdots s_{13})\cdot (s_{14}\cdot s_{13}\cdots s_{1}\cdot s_{1^{\prime}}\cdot s_{2}\cdots s_{14})\cdot (s_{15}\cdots s_{1}\cdot  s_{1^{\prime}}\cdot s_{2}\cdots s_{15})$$ $$\cdot (s_{10}\cdot s_{9}\cdot s_{8}\cdot s_{7} \cdot s_{6} \cdot s_{5})\cdot (s_{11}\cdot s_{10}\cdot s_{9}\cdot s_{8}\cdot s_{7}\cdot s_{6})\cdot (s_{12}\cdot s_{11}\cdot s_{10} \cdot  s_{9})\cdot (s_{13}\cdot s_{12}\cdot s_{11}\cdot s_{10})$$ $$\cdot (s_{14}\cdot s_{13}\cdot s_{12}\cdot s_{11})\cdot (s_{15}\cdot s_{14}\cdot s_{13}\cdot s_{12})\cdot s_{16}. $$

By Lemma \ref{wl-sk}, we get  :
$$\pi = (s_{4}\cdot s_{3}\cdot s_{2}\cdot s_{1}\cdot s_{1^{\prime}}\cdot s_{2}\cdot s_{3}\cdot s_{4})\cdot (s_{6}\cdot s_{5}\cdots s_{1}\cdot s_{1^{\prime}}\cdot s_{2}\cdots s_{6})\cdot  (s_{10}\cdot s_{9}\cdot s_{8}\cdot s_{7}\cdot s_{6}\cdot s_{5})\cdot $$ $$ (s_{11}\cdot s_{10}\cdots s_{1}\cdot s_{1^{\prime}}\cdot s_{2}\cdots  s_{11})\cdot (s_{11}\cdot s_{10}\cdots s_{6})\cdot (s_{12}\cdot  s_{11}\cdot s_{10}\cdot  s_{9})\cdot  (s_{13}\cdot s_{12}\cdots s_{1}\cdot s_{1^{\prime}}\cdot s_{2}\cdots s_{13})\cdot $$ $$  (s_{13}\cdot s_{12}\cdot s_{11}\cdot s_{10})\cdot (s_{14}\cdot s_{13}\cdots s_{1}\cdot s_{1^{\prime}}\cdot s_{2}\cdots s_{14})\cdot (s_{14}\cdot s_{13}\cdot s_{12}\cdot s_{11})\cdot $$ $$   (s_{15}\cdots s_{1}\cdot  s_{1^{\prime}}\cdot s_{2}\cdots s_{15})\cdot  (s_{15}\cdot s_{14}\cdot s_{13}\cdot  s_{12})\cdot s_{16} $$
$$= (s_{4}\cdot s_{3}\cdot s_{2}\cdot s_{1}\cdot s_{1^{\prime}}\cdot s_{2}\cdot s_{3}\cdot s_{4})\cdot (s_{6}\cdot s_{5}\cdots s_{1}\cdot s_{1^{\prime}}\cdot s_{2}\cdots s_{6})\cdot  (s_{10}\cdot s_{9}\cdot s_{8}\cdot s_{7}\cdot s_{6}\cdot s_{5})\cdot (s_{11}\cdot s_{10}\cdots s_{1}\cdot  s_{1^{\prime}}\cdot s_{2}\cdots s_{5})$$ $$ \cdot (s_{12}\cdot  s_{11}\cdot s_{10} \cdot  s_{9})\cdot  (s_{13}\cdot s_{12}\cdots s_{1}\cdot s_{1^{\prime}}\cdot s_{2}\cdots s_{9}) \cdot  (s_{14}\cdot s_{13}\cdots s_{1}\cdot s_{1^{\prime}}\cdot s_{2}\cdots s_{10})$$ $$ \cdot  (s_{15}\cdots s_{1}\cdot  s_{1^{\prime}}\cdot s_{2}\cdots s_{11})\cdot s_{16}=\text{norm}(\pi).$$
Thus:

$$\ell(\pi) = 2 \cdot 4 + 2 \cdot 6 + 2\cdot(11-6) + 2\cdot (13-4) + 2\cdot (14-4) + 2\cdot (15-4)+$$ $$+ 10 \cdot 4 + 12 \cdot 2 + 16 \cdot 3 + 17 -(10^{2})=90+29=119.$$

\end{example}

In the next example, we present an algorithm to find the Coxeter length of an arbitrary element  $\pi$ of $D_n$, presented by the generalized standard $OGS$ of $D_n$, where $\pi^{\prime}$ is not necessarily a standard $OGS$ elementary element.

\begin{example}\label{general-length-example}
Consider $$\pi = t_{7}^{2}\cdot  t_{9}^{2} \cdot t_{12}^{4} \cdot t_{14}^{7} \cdot w_{15} \cdot t_{16}^{4} \cdot t_{19}^{10} \cdot w_{20} \cdot t_{22}^{5} \cdot t_{23}^{4}.$$

Therefore,

$$\pi^{\prime}=t_{7}^{2}\cdot  t_{9}^{2} \cdot t_{12}^{4} \cdot t_{14}^{7}\cdot t_{16}^{4} \cdot t_{19}^{10}\cdot t_{22}^{5} \cdot t_{23}^{4}.$$
Notice,
$$maj(\pi^{\prime})= 2+2+4+7+4+10+5+4=38 > k_1 =7.$$

Hence $\pi^{\prime}$ is not a standard $OGS$ elementary element, So, we consider a standard $OGS$ elementary factorization of $\pi^{\prime}$ as it is described in Definition \ref{canonical-factorization-def}:
$$\pi^{\prime} = t_{7}^{2} \cdot t_{9}^{2} \cdot t_{12}^{3} \quad | t_{12} \cdot t_{14}^{7} \cdot t_{16}^{4} \quad | t_{19}^{10} \cdot t_{22}^{5} \cdot t_{23}^{4}$$

Therefore,
$$\pi = t_{7}^{2} \cdot t_{9}^{2} \cdot t_{12}^{3} \quad | t_{12} \cdot t_{14}^{7} \cdot w_{15} \cdot t_{16}^{4} \quad | t_{19}^{10} \cdot w_{20} \cdot t_{22}^{5} \cdot t_{23}^{4}$$
Then by Remark \ref{wt-general}:
$$\dot{\pi}^{(1)} = t_{7}^{2} \cdot t_{9}^{2} \cdot t_{12}^{3} \quad \dot{\pi}^{(2)} =  t_{12} \cdot t_{14}^{7} \cdot w_{15} \cdot t_{16}^{4} \quad \dot{\pi}^{(3)} = t_{19}^{10} \cdot w_{20} \cdot t_{22}^{5} \cdot t_{23}^{4} $$
Notice,
$$maj\big[{\big(\pi^{\prime}\big)}^{(1)}\big]=7,$$
$$maj\big[{\big(\pi^{\prime}\big)}^{(2)}\big]= 12 \quad maj_1(\dot{\pi}^{(2)})= 8,$$ $$ maj\big[{\big(\pi^{\prime}\big)}^{(3)}\big] = 19 \quad maj_1(\dot{\pi}^{(3)})= 10.$$

Hence, by Theorem \ref{wt} on $\dot{\pi}^{(j)}$ for $1\leq j\leq 3$:

$$\pi = t_{7}^{2} \cdot t_{9}^{2} \cdot t_{12}^{3} \quad |w_{8} \cdot w_{15} \cdot t_{12} \cdot t_{14}^{7} \cdot t_{16}^{4} \quad |w_{10}\cdot w_{20} \cdot t_{19}^{10} \cdot  t_{22}^{5} \cdot t_{23}^{4}$$
By applying Proposition \ref{exchange-dn} on $t_{12}^{3} \cdot w_{8}$   and on  $t_{16}^{4} \cdot w_{10}$ we get :
$$\pi = t_{7}^{2} \cdot t_{9}^{2}\cdot w_3 \cdot w_{11} \cdot  t_{12}^{3} \quad |  w_{15} \cdot t_{12} \cdot t_{14}^{7} \cdot w_4\cdot w_{14} \cdot t_{16}^{4} \quad | w_{20} \cdot t_{19}^{10} \cdot  t_{22}^{5} \cdot t_{23}^{4}$$
By applying Proposition \ref{exchange-dn} on $t_{9}^{2} \cdot w_{3}$   and on  $t_{14}^{7} \cdot w_{4}$ we get :

$$\pi = t_{7}^{2}\cdot w_2 \cdot w_5  \cdot t_{9}^{2}\cdot w_{11} \cdot  t_{12}^{3} \quad |  w_{15} \cdot t_{12}  \cdot w_7 \cdot w_{11}\cdot t_{14}^{7}\cdot w_{14} \cdot t_{16}^{4} \quad | w_{20} \cdot t_{19}^{10} \cdot  t_{22}^{5} \cdot t_{23}^{4}$$

By applying Proposition \ref{exchange-dn} on $t_{7}^{2} \cdot w_{2}$   and on  $t_{12} \cdot w_{7}$ we get :

$$\pi = w_2 \cdot w_4\cdot t_{7}^{2}\cdot w_5  \cdot t_{9}^{2}\cdot w_{11} \cdot  t_{12}^{3} \quad | w_{15}\cdot w_1 \cdot w_8 \cdot t_{12}  \cdot w_{11}\cdot t_{14}^{7}\cdot w_{14} \cdot t_{16}^{4} \quad | w_{20} \cdot t_{19}^{10} \cdot  t_{22}^{5} \cdot t_{23}^{4}$$

By applying Proposition \ref{exchange-dn} on $t_{7}^{2} \cdot w_{5}$   and on  $t_{12} \cdot w_{11}$ we get :

$$\pi = w_2\cdot w_4\cdot w_2 \cdot t_{7}^{2}\cdot t_{9}^{2}\cdot w_{11} \cdot  t_{12}^{3} \quad | w_{15} \cdot w_1 \cdot w_8\cdot w_1 \cdot t_{12} \cdot t_{14}^{7}\cdot w_{14} \cdot t_{16}^{4} \quad | w_{20} \cdot t_{19}^{10} \cdot  t_{22}^{5} \cdot t_{23}^{4}$$

By applying Proposition \ref{exchange-dn} on $w_4\cdot w_2$ and on $w_8\cdot w_1$ we get:

$$\pi = w_2\cdot w_2\cdot w_4 \cdot t_{7}^{2}\cdot t_{9}^{2}\cdot w_{11} \cdot  t_{12}^{3} \quad | w_{15} \cdot w_1 \cdot w_1\cdot w_8 \cdot t_{12} \cdot t_{14}^{7}\cdot w_{14} \cdot t_{16}^{4} \quad | w_{20} \cdot t_{19}^{10} \cdot  t_{22}^{5} \cdot t_{23}^{4}$$

Since by applying Lemma \ref{order-w},  $w_2^2=w_1^2=1$:

$$\pi =  w_4 \cdot t_{7}^{2}\cdot t_{9}^{2}\cdot w_{11} \cdot  t_{12}^{3} \quad | w_{15}\cdot w_8 \cdot t_{12} \cdot t_{14}^{7}\cdot w_{14} \cdot t_{16}^{4} \quad | w_{20} \cdot t_{19}^{10} \cdot  t_{22}^{5} \cdot t_{23}^{4}$$

Then, by applying Lemma  \ref{tw-standard} on $t_{7}^{2}\cdot t_{9}^{2}\cdot w_{11}$ and on $t_{12} \cdot t_{14}^{7}\cdot w_{14}$:

$$\pi = w_4\cdot w_4\cdot w_{11} \cdot t_{7}^{2} \cdot t_{9}^{2}\cdot   t_{12}^{3} \quad | w_{15} \cdot w_{8} \cdot w_{8}\cdot w_{14} \cdot t_{12} \cdot t_{14}^{7} \cdot  t_{16}^{4} \quad | w_{20} \cdot t_{19}^{10} \cdot  t_{22}^{5} \cdot t_{23}^{4}$$

By Lemma \ref{order-w}, $w_4^2=w_8^2=1$:
$$\pi = w_{11} \cdot t_{7}^{2} \cdot t_{9}^{2}\cdot   t_{12}^{3} \quad | w_{15} \cdot w_{14} \cdot t_{12} \cdot t_{14}^{7} \cdot  t_{16}^{4} \quad | w_{20} \cdot t_{19}^{10} \cdot  t_{22}^{5} \cdot t_{23}^{4}$$

Then, by applying Proposition \ref{exchange-dn} on $w_{15}\cdot w_{14}$ we get:

\begin{equation}\label{factorization-dn}
\pi = w_{11} \cdot t_{7}^{2} \cdot t_{9}^{2}\cdot   t_{12}^{3} \quad | w_{14} \cdot w_{15} \cdot t_{12} \cdot t_{14}^{7} \cdot  t_{16}^{4} \quad | w_{20} \cdot t_{19}^{10} \cdot  t_{22}^{5} \cdot t_{23}^{4}
\end{equation}

Then, $$\pi=\pi^{(1)}\cdot \pi^{(2)}\cdot\pi^{(3)} $$
such that:

$$\pi^{(1)}=w_{11}\cdot t_{7}^{2}\cdot t_{9}^{2}\cdot  t_{12}^{3}\quad \pi^{(2)}= w_{14} \cdot w_{15} \cdot t_{12} \cdot t_{14}^{7} \cdot  t_{16}^{4}\quad \pi^{(3)}= w_{20} \cdot t_{19}^{10} \cdot  t_{22}^{5} \cdot t_{23}^{4}$$

Now, consider $\pi^{(1)}$.
$$\pi^{(1)}=w_{11}\cdot t_{7}^{2}\cdot t_{9}^{2}\cdot  t_{12}^{3},$$
where $t_{7}^{2}\cdot t_{9}^{2}\cdot  t_{12}^{3}$ is a standard $OGS$ elementary element, with \\ $k_1=7, \quad k_2=9, \quad k_3=12$, and $i_{k_1}=2, \quad i_{k_2}=2, \quad i_{k_3}=3$, as it is defined in Definition \ref{elementary}.\\

Since, $9<11<12$, we consider Proposition \ref{len-pi} on $w_{11}\cdot (t_{7}^{2}\cdot t_{9}^{2}\cdot  t_{12}^{3})$, for the case $k_2\leq L<k_3$.

Then, we get:

$$norm(\pi^{(1)})=(s_7\cdot s_6\cdot s_5\cdot s_4\cdot s_3)\cdot (s_8\cdot s_7\cdot s_6\cdot s_5\cdot s_4)\cdot (s_9\cdot s_8\cdot s_7)\cdot (s_{10}\cdot s_9\cdot s_8)$$ $$\cdot (s_{11}\cdot s_{10}\cdots s_2\cdot s_{1^{\prime}}\cdot s_1\cdot s_2\cdots s_8).$$

The length of $\pi^{(1)}$:
$$\ell(\pi^{(1)}) = \ell(t_{7}^{2}\cdot t_{9}^{2}\cdot  t_{12}^{3}) + 2\cdot (11-3).$$

Now, consider $\pi^{(2)}$.
$$\pi^{(2)}=w_{14}\cdot w_{15}\cdot t_{12}\cdot t_{14}^{7}\cdot  t_{16}^{4},$$
where $t_{12}\cdot t_{14}^{7}\cdot  t_{16}^{4}$ is a standard $OGS$ elementary element, with \\ $k_1=12, \quad k_2=14, \quad k_3=16$, and $i_{k_1}=1, \quad i_{k_2}=7, \quad i_{k_3}=4$, as it is defined in Definition \ref{elementary}.\\

Since, $14<15<16$ and  $14\leq 14<16$, we consider Proposition \ref{len-pi} on $w_{14}\cdot w_{15}\cdot  (t_{12}\cdot t_{14}^{7}\cdot  t_{16}^{4})$, for the case $k_2\leq L<k_3$.

Then, we get:

$$norm(\pi^{(2)}) =   (s_{12} \cdot s_{11} \cdots s_{2}) \cdot (s_{13} \cdot s_{12}  \cdots s_{3}) $$ $$\cdot (s_{14} \cdot s_{13} \cdots s_{2} \cdot s_{1^{\prime}}\cdot s_1\cdot s_2 \cdots s_{10}) \cdot (s_{15} \cdot s_{14}  \cdots s_{2} \cdot s_{1^{\prime}}\cdot s_1\cdot s_2 \cdots s_{11})  .$$

The length of $\pi^{(2)}$:
$$\ell(\pi^{(2)}) = \ell(t_{12}\cdot t_{14}^{7}\cdot  t_{16}^{4}) + 2\cdot (14-4) + 2\cdot (15-4).$$

Now, consider $\pi^{(3)}$.
$$\pi^{(3)}=w_{20}\cdot t_{19}^{10}\cdot t_{22}^{5}\cdot  t_{23}^{4},$$
where $t_{19}^{10}\cdot t_{22}^{5}\cdot  t_{23}^{4}$ is a standard $OGS$ elementary element, with \\ $k_1=19, \quad k_2=22, \quad k_3=23$, and $i_{k_1}=10, \quad i_{k_2}=5, \quad i_{k_3}=4$, as it is defined in Definition \ref{elementary}.\\

Since, $19<20<22$, we consider Proposition \ref{len-pi} on $w_{20}\cdot  (t_{19}^{10}\cdot t_{22}^{5}\cdot  t_{23}^{4})$, for the case $k_1\leq L<k_2$.

Then, we get:

$$norm(\pi^{(3)})=(s_{19}\cdot s_{18} \cdots s_{11})\cdot (s_{20}\cdot s_{19}\cdots s_2\cdot s_{1^{\prime}}\cdot s_1\cdot s_2 \cdots s_{11})$$ $$\cdot (s_{21}\cdot s_{20}\cdots s_{13})\cdot (s_{22}\cdot s_{21}\cdot s_{20}\cdot s_{19}).$$

The length of $\pi^{(3)}$:
$$\ell(\pi^{(3)}) = \ell(t_{19}^{10}\cdot t_{22}^{5}\cdot  t_{23}^{4}) + 2\cdot (20-9).$$

Therefore,

$$norm(\pi)=norm(\pi^{(1)}\cdot \pi^{(2)}\cdot\pi^{(3)})=norm(\pi^{(1)})\cdot norm(\pi^{(2)})\cdot  norm(\pi^{(3)}).$$

Hence,

$$\ell(\pi)=\ell(\pi^{(1)}\cdot \pi^{(2)}\cdot\pi^{(3)})=\ell(\pi^{(1)})+\ell(\pi^{(2)})+\ell(\pi^{(3)}).$$
\\

Now, we find~ $\pi=\pi^{\bullet}\cdot \pi^{\circ}$ ~decomposition (where, $\pi^{\bullet}\in Id_n^{\bullet}$ and $\pi^{\circ}\in S_n^{\circ}$) of $\pi$, as it is described in Remark \ref{wt-general},  by considering Equation \ref{factorization-dn}.\\

By applying Lemma \ref{tw-standard} on $(t_{7}^{2}\cdot t_{9}^{2}\cdot  t_{12}^{3})\cdot w_{14}\cdot w_{15}$ we get:

$$\pi = w_{11}\cdot w_{14} \cdot w_{15} \cdot t_{7}^{2}\cdot t_{9}^{2}\cdot  t_{12}^{3} \quad |  t_{12} \cdot t_{14}^{7} \cdot  t_{16}^{4} \quad | w_{20} \cdot t_{19}^{10} \cdot  t_{22}^{5} \cdot t_{23}^{4}$$

Then, by applying Lemma \ref{tw-standard} on $(t_{12} \cdot t_{14}^{7} \cdot  t_{16}^{4})\cdot w_{20}$.

$$\pi = w_{11}\cdot w_{14} \cdot w_{15} \cdot t_{7}^{2}\cdot t_{9}^{2}\cdot  t_{12}^{3} \quad |w_{20} \cdot  t_{12} \cdot t_{14}^{7} \cdot  t_{16}^{4} \quad |  t_{19}^{10} \cdot  t_{22}^{5} \cdot t_{23}^{4}$$

Then, by applying Lemma \ref{tw-standard} on $(t_{7}^{2}\cdot t_{9}^{2}\cdot  t_{12}^{3})\cdot w_{20}$.

$$\pi = w_{11}\cdot w_{14} \cdot w_{15} \cdot  w_{20} \cdot t_{7}^{2}\cdot t_{9}^{2}\cdot  t_{12}^{3} \quad | t_{12} \cdot t_{14}^{7} \cdot  t_{16}^{4} \quad |  t_{19}^{10} \cdot  t_{22}^{5} \cdot t_{23}^{4}$$

Hence,
$$\pi=\pi^{\bullet}\cdot \pi^{\circ}$$
such that:
$$\pi^{\bullet}=w_{11}\cdot w_{14} \cdot w_{15} \cdot  w_{20},$$

$$\pi^{\circ}=t_{7}^{2}\cdot t_{9}^{2}\cdot  t_{12}^{3} \quad | t_{12} \cdot t_{14}^{7} \cdot  t_{16}^{4} \quad |  t_{19}^{10} \cdot  t_{22}^{5} \cdot t_{23}^{4}. $$

\end{example}

\section{Conclusions and future plans}
This paper is a natural continuation of the paper \cite{S1}, where  there was introduced a quite interesting generalization of the fundamental theorem for abelian groups to two important and very elementary families of non-abelian Coxeter groups, the $I$-type (dihedral groups), and the $A$-type (symmetric groups).
There were introduced canonical forms, with very interesting exchange laws, and quite interesting properties concerning the Coxeter lengths of the elements. In this paper we generalized the results of \cite{S1} for the $D$-type Coxeter groups, namely $D_n$. The results of the paper  motivate us for further generalizations of the $OGS$ and the arising properties from it for more families of  Coxeter and generalized Coxeter groups, which have an importance in the classification
of Lie algebras and the Lie-type simple groups, and in other fields of mathematics, such as
algebraic geometry for classification of fundamental groups of Galois covers of surfaces \cite{alst}.
Furthermore, it is interesting to find generalization of the $OGS$ to the affine classical families $\tilde{A}_n$, $\tilde{B}_n$, $\tilde{C}_n$, and $\tilde{D}_n$, and also to other generalizations of the mentioned Coxeter groups, as the complex reflection groups $G(r, p, n)$ \cite{ST} or the generalized affine classical groups, the definition of which is described in \cite{rtv}, \cite{ast}.

\end{document}